\newcommand{\tun}{\begin{tikzpicture}[line cap=round,line join=round,>=triangle 45,x=0.5cm,y=0.5cm]
\clip(-0.2,-0.1) rectangle (0.2,0.2);
\begin{scriptsize}
\draw [fill=black] (0.,0.) circle (1pt);
\end{scriptsize}
\end{tikzpicture}}
\newcommand{\tdeux}{\begin{tikzpicture}[line cap=round,line join=round,>=triangle 45,x=0.5cm,y=0.5cm]
\clip(-.2,-.1) rectangle (0.2,0.7);
\draw [line width=.5pt] (0.,0.5)-- (0.,0.);
\begin{scriptsize}
\draw [fill=black] (0.,0.) circle (1pt);
\draw [fill=black] (0.,0.5) circle (1pt);
\end{scriptsize}
\end{tikzpicture}}
\newcommand{\ttroisun}{\begin{tikzpicture}[line cap=round,line join=round,>=triangle 45,x=0.5cm,y=0.5cm]
\clip(-0.5,-0.1) rectangle (0.5,0.7);
\draw [line width=0.5pt] (0.,0.)-- (-0.3,0.5);
\draw [line width=0.5pt] (0.,0.)-- (0.3,0.5);
\begin{scriptsize}
\draw [fill=black] (-0.3,0.5) circle (1pt);
\draw [fill=black] (0.,0.) circle (1pt);
\draw [fill=black] (0.3,0.5) circle (1pt);
\end{scriptsize}
\end{tikzpicture}}
\newcommand{\ttroisdeux}{\begin{tikzpicture}[line cap=round,line join=round,>=triangle 45,x=0.5cm,y=0.5cm]
\clip(-.2,-.1) rectangle (0.2,1.2);
\draw [line width=0.5pt] (0.,0.5)-- (0.,0.);
\draw [line width=0.5pt] (0.,0.5)-- (0.,1.);
\begin{scriptsize}
\draw [fill=black] (0.,0.) circle (1pt);
\draw [fill=black] (0.,0.5) circle (1pt);
\draw [fill=black] (0.,1.) circle (1pt);
\end{scriptsize}
\end{tikzpicture}}
\newcommand{\tquatreun}{\begin{tikzpicture}[line cap=round,line join=round,>=triangle 45,x=0.5cm,y=0.5cm]
\clip(-0.5,-0.1) rectangle (0.5,0.7);
\draw [line width=0.5pt] (0.,0.)-- (-0.3,0.5);
\draw [line width=0.5pt] (0.,0.)-- (0.3,0.5);
\draw [line width=0.5pt] (0.,0.)-- (0.,0.5);
\begin{scriptsize}
\draw [fill=black] (-0.3,0.5) circle (1.0pt);
\draw [fill=black] (0.,0.) circle (1.0pt);
\draw [fill=black] (0.3,0.5) circle (1.0pt);
\draw [fill=black] (0.,0.5) circle (1.0pt);
\end{scriptsize}
\end{tikzpicture}}
\newcommand{\tquatredeux}{\begin{tikzpicture}[line cap=round,line join=round,>=triangle 45,x=0.5cm,y=0.5cm]
\clip(-0.5,-0.1) rectangle (0.5,1.2);
\draw [line width=0.5pt] (0.,0.)-- (-0.3,0.5);
\draw [line width=0.5pt] (0.,0.)-- (0.3,0.5);
\draw [line width=0.5pt] (-0.3,0.5)-- (-0.3,1.);
\begin{scriptsize}
\draw [fill=black] (-0.3,0.5) circle (1.0pt);
\draw [fill=black] (0.,0.) circle (1.0pt);
\draw [fill=black] (0.3,0.5) circle (1.0pt);
\draw [fill=black] (-0.3,1.) circle (1.0pt);
\end{scriptsize}
\end{tikzpicture}}
\newcommand{\tquatrequatre}{\begin{tikzpicture}[line cap=round,line join=round,>=triangle 45,x=0.5cm,y=0.5cm]
\clip(-0.5,-0.1) rectangle (0.5,1.7);
\draw [line width=0.5pt] (0.,0.)-- (0.,0.5);
\draw [line width=0.5pt] (0.,0.5)-- (0.3,1.);
\draw [line width=0.5pt] (0.,0.5)-- (-0.3,1.);
\begin{scriptsize}
\draw [fill=black] (0.,0.) circle (1.0pt);
\draw [fill=black] (0.,0.5) circle (1.0pt);
\draw [fill=black] (-0.3,1.) circle (1.0pt);
\draw [fill=black] (0.3,1.) circle (1.0pt);
\end{scriptsize}
\end{tikzpicture}}
\newcommand{\tquatrecinq}{\begin{tikzpicture}[line cap=round,line join=round,>=triangle 45,x=0.5cm,y=0.5cm]
\clip(-.2,-.1) rectangle (0.5,1.7);
\draw [line width=0.5pt] (0.,0.)-- (0.,0.5);
\draw [line width=0.5pt] (0.,0.5)-- (0.,1.);
\draw [line width=0.5pt] (0.,1.)-- (0.,1.5);
\begin{scriptsize}
\draw [fill=black] (0.,0.) circle (1.0pt);
\draw [fill=black] (0.,0.5) circle (1.0pt);
\draw [fill=black] (0.,1.) circle (1.0pt);
\draw [fill=black] (0.,1.5) circle (1.0pt);
\end{scriptsize}
\end{tikzpicture}}
\newcommand{\tcinqun}{\begin{tikzpicture}[line cap=round,line join=round,>=triangle 45,x=0.5cm,y=0.5cm]
\clip(-0.7,-0.1) rectangle (0.8,0.7);
\draw [line width=0.5pt] (0.,0.)-- (-0.5,0.5);
\draw [line width=0.5pt] (0.,0.)-- (-0.2,0.5);
\draw [line width=0.5pt] (0.,0.)-- (0.2,0.5);
\draw [line width=0.5pt] (0.,0.)-- (0.5,0.5);
\begin{scriptsize}
\draw [fill=black] (-0.5,0.5) circle (1.0pt);
\draw [fill=black] (-0.2,0.5) circle (1.0pt);
\draw [fill=black] (0.2,0.5) circle (1.0pt);
\draw [fill=black] (0.5,0.5) circle (1.0pt);
\draw [fill=black] (0.,0.) circle (1.0pt);
\end{scriptsize}
\end{tikzpicture}}
\newcommand{\tcinqdeux}{\begin{tikzpicture}[line cap=round,line join=round,>=triangle 45,x=0.5cm,y=0.5cm]
\clip(-0.5,-0.1) rectangle (0.5,1.2);
\draw [line width=0.5pt] (0.,0.)-- (-0.3,0.5);
\draw [line width=0.5pt] (0.,0.)-- (0.3,0.5);
\draw [line width=0.5pt] (0.,0.)-- (0.,0.5);
\draw [line width=0.5pt] (-0.3,0.5)-- (-0.3,1.);
\begin{scriptsize}
\draw [fill=black] (-0.3,0.5) circle (1.0pt);
\draw [fill=black] (0.,0.) circle (1.0pt);
\draw [fill=black] (0.,0.5) circle (1.0pt);
\draw [fill=black] (0.3,0.5) circle (1.0pt);
\draw [fill=black] (-0.3,1.) circle (1.0pt);
\end{scriptsize}
\end{tikzpicture}}
\newcommand{\tcinqcinq}{\begin{tikzpicture}[line cap=round,line join=round,>=triangle 45,x=0.5cm,y=0.5cm]
\clip(-0.5,-0.1) rectangle (0.5,1.2);
\draw [line width=0.5pt] (0.,0.)-- (-0.3,0.5);
\draw [line width=0.5pt] (-0.3,0.5)-- (-0.3,1.);
\draw [line width=0.5pt] (0.,0.)-- (0.3,0.5);
\draw [line width=0.5pt] (0.3,0.5)-- (0.3,1.);
\begin{scriptsize}
\draw [fill=black] (-0.3,0.5) circle (1.0pt);
\draw [fill=black] (0.,0.) circle (1.0pt);
\draw [fill=black] (-0.3,1.) circle (1.0pt);
\draw [fill=black] (0.3,0.5) circle (1.0pt);
\draw [fill=black] (0.3,1.) circle (1.0pt);
\end{scriptsize}
\end{tikzpicture}}
\newcommand{\tcinqsix}{\begin{tikzpicture}[line cap=round,line join=round,>=triangle 45,x=0.5cm,y=0.5cm]
\clip(-0.7,-0.1) rectangle (0.5,1.2);
\draw [line width=0.5pt] (0.,0.)-- (-0.3,0.5);
\draw [line width=0.5pt] (0.,0.)-- (0.3,0.5);
\draw [line width=0.5pt] (-0.3,0.5)-- (-0.6,1.);
\draw [line width=0.5pt] (-0.3,0.5)-- (0.,1.);
\begin{scriptsize}
\draw [fill=black] (-0.3,0.5) circle (1.0pt);
\draw [fill=black] (0.,0.) circle (1.0pt);
\draw [fill=black] (0.3,0.5) circle (1.0pt);
\draw [fill=black] (-0.6,1.) circle (1.0pt);
\draw [fill=black] (0.,1.) circle (1.0pt);
\end{scriptsize}
\end{tikzpicture}}
\newcommand{\tcinqhuit}{\begin{tikzpicture}[line cap=round,line join=round,>=triangle 45,x=0.5cm,y=0.5cm]
\clip(-0.5,-0.1) rectangle (0.5,1.7);
\draw [line width=0.5pt] (0.,0.)-- (-0.3,0.5);
\draw [line width=0.5pt] (-0.3,0.5)-- (-0.3,1.);
\draw [line width=0.5pt] (0.,0.)-- (0.3,0.5);
\draw [line width=0.5pt] (-0.3,1.)-- (-0.3,1.5);
\begin{scriptsize}
\draw [fill=black] (-0.3,0.5) circle (1.0pt);
\draw [fill=black] (0.,0.) circle (1.0pt);
\draw [fill=black] (-0.3,1.) circle (1.0pt);
\draw [fill=black] (0.3,0.5) circle (1.0pt);
\draw [fill=black] (-0.3,1.5) circle (1.0pt);
\end{scriptsize}
\end{tikzpicture}}
\newcommand{\tcinqdix}{\begin{tikzpicture}[line cap=round,line join=round,>=triangle 45,x=0.5cm,y=0.5cm]
\clip(-0.5,-0.1) rectangle (0.5,1.7);
\draw [line width=0.5pt] (0.,0.)-- (0.,0.5);
\draw [line width=0.5pt] (0.,0.5)-- (0.3,1.);
\draw [line width=0.5pt] (0.,0.5)-- (-0.3,1.);
\draw [line width=0.5pt] (0.,0.5)-- (0.,1.);
\begin{scriptsize}
\draw [fill=black] (0.,0.) circle (1.0pt);
\draw [fill=black] (0.,0.5) circle (1.0pt);
\draw [fill=black] (-0.3,1.) circle (1.0pt);
\draw [fill=black] (0.3,1.) circle (1.0pt);
\draw [fill=black] (0.,1.) circle (1.0pt);
\end{scriptsize}
\end{tikzpicture}}
\newcommand{\tcinqonze}{\begin{tikzpicture}[line cap=round,line join=round,>=triangle 45,x=0.5cm,y=0.5cm]
\clip(-0.5,-0.1) rectangle (0.5,1.7);
\draw [line width=0.5pt] (0.,0.)-- (0.,0.5);
\draw [line width=0.5pt] (0.,0.5)-- (0.3,1.);
\draw [line width=0.5pt] (0.,0.5)-- (-0.3,1.);
\draw [line width=0.5pt] (-0.3,1.)-- (-0.3,1.5);
\begin{scriptsize}
\draw [fill=black] (0.,0.) circle (1.0pt);
\draw [fill=black] (0.,0.5) circle (1.0pt);
\draw [fill=black] (-0.3,1.) circle (1.0pt);
\draw [fill=black] (0.3,1.) circle (1.0pt);
\draw [fill=black] (-0.3,1.5) circle (1.0pt);
\end{scriptsize}
\end{tikzpicture}}
\newcommand{\tcinqdouze}{\begin{tikzpicture}[line cap=round,line join=round,>=triangle 45,x=0.5cm,y=0.5cm]
\clip(-0.5,-0.1) rectangle (0.5,1.7);
\draw [line width=0.5pt] (0.,0.)-- (0.,0.5);
\draw [line width=0.5pt] (0.,0.5)-- (0.3,1.);
\draw [line width=0.5pt] (0.,0.5)-- (-0.3,1.);
\draw [line width=0.5pt] (0.3,1.)-- (0.3,1.5);
\begin{scriptsize}
\draw [fill=black] (0.,0.) circle (1.0pt);
\draw [fill=black] (0.,0.5) circle (1.0pt);
\draw [fill=black] (-0.3,1.) circle (1.0pt);
\draw [fill=black] (0.3,1.) circle (1.0pt);
\draw [fill=black] (0.3,1.5) circle (1.0pt);
\end{scriptsize}
\end{tikzpicture}}
\newcommand{\tcinqtreize}{\begin{tikzpicture}[line cap=round,line join=round,>=triangle 45,x=0.5cm,y=0.5cm]
\clip(-0.5,-0.1) rectangle (0.5,1.7);
\draw [line width=0.5pt] (0.,0.)-- (0.,0.5);
\draw [line width=0.5pt] (0.,0.5)-- (0.,1.);
\draw [line width=0.5pt] (0.,1.)-- (-0.3,1.5);
\draw [line width=0.5pt] (0.,1.)-- (0.3,1.5);
\begin{scriptsize}
\draw [fill=black] (0.,0.) circle (1.0pt);
\draw [fill=black] (0.,0.5) circle (1.0pt);
\draw [fill=black] (0.,1.) circle (1.0pt);
\draw [fill=black] (-0.3,1.5) circle (1.0pt);
\draw [fill=black] (0.3,1.5) circle (1.0pt);
\end{scriptsize}
\end{tikzpicture}}
\newcommand{\tcinqquatorze}{\begin{tikzpicture}[line cap=round,line join=round,>=triangle 45,x=0.5cm,y=0.5cm]
\clip(-.2,-.1) rectangle (0.5,2.2);
\draw [line width=0.5pt] (0.,0.)-- (0.,0.5);
\draw [line width=0.5pt] (0.,0.5)-- (0.,1.);
\draw [line width=0.5pt] (0.,1.)-- (0.,1.5);
\draw [line width=0.5pt] (0.,1.5)-- (0.,2.);
\begin{scriptsize}
\draw [fill=black] (0.,0.) circle (1.0pt);
\draw [fill=black] (0.,0.5) circle (1.0pt);
\draw [fill=black] (0.,1.) circle (1.0pt);
\draw [fill=black] (0.,1.5) circle (1.0pt);
\draw [fill=black] (0.,2.) circle (1.0pt);
\end{scriptsize}
\end{tikzpicture}}
\newcommand{\tquatredeuxAlternate}{\begin{tikzpicture}[line cap=round,line join=round,>=triangle 45,x=0.5cm,y=0.5cm]
\clip(-0.5,-0.1) rectangle (0.5,1.2);
\draw [line width=0.5pt] (0.,0.)-- (-0.3,0.5);
\draw [line width=0.5pt] (0.,0.)-- (0.3,0.5);
\draw [line width=0.5pt] (0.3,0.5)-- (0.3,1.);
\begin{scriptsize}
\draw [fill=black] (-0.3,0.5) circle (1.0pt);
\draw [fill=black] (0.,0.) circle (1.0pt);
\draw [fill=black] (0.3,0.5) circle (1.0pt);
\draw [fill=black] (0.3,1.) circle (1.0pt);
\end{scriptsize}
\end{tikzpicture}}
\definecolor{red}{rgb}{1.,0.,0.}
\theoremstyle{plain}
\newtheorem{theo}{Theorem}
\newtheorem{lemma}[theo]{Lemma}
\newtheorem{prop}[theo]{Proposition}
\newtheorem{defi}[theo]{Definition}
\theoremstyle{remark}
\newtheorem{remark}{Remark}
\newtheorem{example}{Example}
\newcommand{\K}{\mathbb{K}}
\newcommand{\N}{\mathbb{N}}
\newcommand{\seq}{\mathcal{S}eq}
\newcommand{\g}{\mathfrak{g}}
\newcommand{\lam}{\lambda}
\newcommand{\C}{\mathcal{C}}
\newcommand{\set}[1]{\{  #1  \}}
\newcommand{\cop}{\Delta}
\newcommand{\tensor}{\otimes}
\newcommand{\squareparens}[1]{\left[ #1 \right]}
\newcommand{\One}{\mathbbm{1}} 
\title{Sequences of Trees and Higher-Order Renormalization Group Equations}
\author{William T. Dugan, Lo\"\i c Foissy, and Karen Yeats}
\date{\today}
\begin{document}

\maketitle

\begin{abstract}
  We define a notion of higher order renormalization group equation and investigate when a sequence of trees satisfies such an equation.  In the strongest sense, the sequence of trees satisfies a $k$th order renormalization group equation when applying any choice of Feynman rules results in a Green function satisfying a $k$th order renormalization group equation, and we characterize all such sequences of trees.  We also make some comments on sequences of trees which require special choices of Feynman rules in order to satisfy a higher order renormalization group equation.  
\end{abstract}

\tableofcontents

\section{Introduction}
\label{sect::Introduction}

The renormalization group equation is a very important equation in quantum field theory since it describes how $n$-point functions of the theory change with changes in the energy scale and the coupling.  Such a description makes the renormalization group equation appear to be primarily in the domain of analysis and physics.  However, using the Hopf algebra structure of renormalization, the renormalization group equation can be seen as encoding a purely combinatorial property, from which the physics and analysis follow.  This combinatorial property is a particular kind of linear growth condition on the combinatorial objects which give the $n$-point functions; see Theorem \ref{theo1}.  From this linear growth condition, we are led to ask about polynomial growth of higher degree.  Translating back to the original renormalization group equation this yields higher order derivatives in the coupling.  The resulting equations we call \textbf{\emph{higher order renormalization group equations}}.  The first order case corresponds to the usual renormalization group equation and the zeroth order case corresponds to the special case in quantum field theory where the $\beta$-function of the theory is identically 0 and so the theory has a pure scaling solution.

For this paper we will work with the Connes-Kreimer Hopf algebra of rooted trees.  This Hopf algebra is universal among pairs of a Hopf algebra and a Hochschild 1-cocycle \cite{ConnesKreimer} so there is no great loss in this specialization.  For the Connes-Kreimer Hopf algebra the relevant 1-cocycle is $B_+$, the add-a-root operator, see Definition \ref{def::B+}.  The 1-cocycle provides the link to the Feynman rules, and hence to the renormalization group equation and the physics more generally, as we will see in Section \ref{sect::trees_and_tree_feynman_rules}.

Our goal, then, in this paper is to characterize Hopf subalgebras of the Connes-Kreimer Hopf algebra which satisfy higher order renormalization group equations in a way which is suitably insensitive to the choice of Feynman rules.  To pin down the scaling freedom, we will work not with Hopf subalgebras themselves but rather with the sequence of their generators.  This leads to our definition of a \textbf{\emph{strong $k$th order sequence}}, see Definition \ref{def::strong_sequence}.  We will characterize all strong $k$th order sequences, see Section \ref{subsect::strong_0th_order_seq} for zeroth order, Section \ref{subsect::strong_1st_order_seq} for first order, and Section \ref{subsect::2nd_order_seq} for higher order.  In particular we find the only strong sequences of order 2 or larger are scaled corollas.  This lines up with how quantum field theory only sees the zeroth and first order cases.  The characterization of zeroth and first order solutions includes some we recognize from physics or combinatorics, and some that appear new.

If we relax the condition that the sequence should satisfy a higher order renormalization group equation in a way which is insensitive to the choice of Feynman rules, and allow ourselves to make convenient choices of Feynman rules, then we get the notion of \textbf{\emph{weak $k$th order sequences}}, see Definition \ref{def::weak_sequence}.  These are much wilder, so we will not give a classification, but we will make some comments on a few examples of particular interest, see Section \ref{sect::comments_on_weak_sequences}.

To this end, we will proceed by laying out the relevant background and set up in Section~\ref{sect::set up}, then define our notion of higher order renormalization group equation and its combinatorial analogue in Section~\ref{sect::higher_order_RGEs}.  We will define our key notions of sequence and $\Lambda$-array in Section~\ref{sec lambda} as well as the notion of strong and weak $k$th order sequences.  Then we proceed to the characterization of strong sequences in Section~\ref{sect::characterization_of_strong_sequences} and conclude with some comments on weak sequences in Section~\ref{sect::comments_on_weak_sequences}.

\section{Background and set up}\label{sect::set up}

Let $\K$ be a field of characteristic zero. All the objects of this paper are taken over $\K$.

\subsection{Trees and tree Feynman rules}

\label{sect::trees_and_tree_feynman_rules}

Let $\mathcal{T}$ be the set of non-empty rooted trees.  Elements of $\mathcal{T}$ have no plane structure, so for example $$\tquatredeux = \tquatredeuxAlternate.$$

An \textbf{\emph{admissible cut}} of a tree $t \in \mathcal{T}$ is a non-empty subset $c$ of the edges of $t$ such that $c$ does not contain any two edges that lie on the same path from the root of $T$ to any leaf. See Figure~\ref{fig::admissible_cut}.

We define $H_{CK}$ the \textbf{\emph{Connes-Kreimer Hopf algebra of rooted trees}} as follows.  As an algebra $H_{CK} = \K[\mathcal{T}]$ where we view a forest as a monomial by identifying disjoint union of trees with the multiplication of the polynomial algebra and so the empty forest, notated $\One$, is the unit element of the algebra.  $H_{CK}$ is upgraded to a Hopf algebra via the following coproduct:
for any rooted tree $t\in \mathcal{T}$,
\[\Delta(t)= t \otimes \One + \One \otimes t + \sum_{\substack{\text{$c$ non-empty}\\\text{admissible cut of $t$}}}  P^c(t) \otimes R^c(t),\]
where $R^c(t)$ is the unique subtree containing the root of $t$ after removing the edges of $c$ and $P^c(t)$ is the forest of all trees other than $R^c(t)$ produced by removing the edges of $c$. We then extend $\Delta$ as an algebra homomorphism. The counit is $\eta(t) = 0$ for $t\in \mathcal{T}$, $\eta(\One)=1$ and extended as an algebra homomorphism.

\begin{figure}
    \centering
    \begin{subfigure}[b]{0.4\textwidth}
         \centering
            \includegraphics[width=2.5cm]{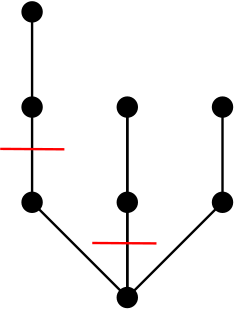}
         \caption{An admissible cut}
         \label{fig:y equals x}

    \end{subfigure}
    \begin{subfigure}[b]{0.5\textwidth}
         \centering
        \includegraphics[width=2.5cm]{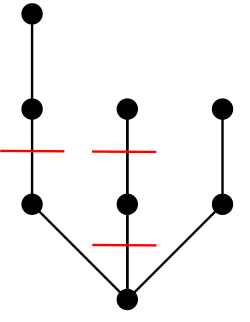}
\caption{A cut that is not admissible}
    \end{subfigure}
    \caption{Example and non-example of an of admissible cut. Edges with a red strike-through are those being included in the cut $c$. The cut in (b) is not admissible since the unique path from the root to the middle leaf is being cut twice.}
    \label{fig::admissible_cut}
\end{figure}

$H_{CK}$ is graded by the number of vertices and is connected under this grading, so $H_{CK}$ is a Hopf algebra with the antipode given recursively (see \cite{Sorin_D_Hopf_Algebras_An_Introduction}).  Specifically, the antipode is defined by
\[
S(t)= -t  - \sum_{\substack{\text{$c$ non-empty}\\\text{admissible cut of $t$}}}  S(P^c(t)) R^c(t)
\]
on trees and extends in general as an antiautomorphism, but our multiplication is commutative, so our $S$ is an automorphism.

\bigskip

We will be primarily interested in Hopf subalgebras with one generator in each degree.

\begin{example}\label{eg ladder 1}
Let $\ell_n$ be the rooted tree consisting of a single path with $n$ vertices and the root at one end, so \[
\ell_1 = \tun \quad \ell_2 = \tdeux \quad \ell_3 = \ttroisdeux.
\]
These are known as \textbf{\textit{ladder trees}}.  We can quickly check that the polynomial algebra $\K[\ell_1, \ell_2, \ldots]$ is in fact a Hopf subalgebra of $H_{CK}$ by noting that $\displaystyle \Delta(\ell_i) = \sum_{k=0}^i \ell_k\otimes \ell_{i-k}$ using the convention that $\ell_0=\One$.
\end{example}

\begin{example}
\label{ex::Connes_Moscovici}
Another important example of a Hopf subalgebra with one generator in each degree is the Connes-Moscovici Hopf algebra, which for our purposes is best defined as follows.  Say $\bullet$ is grafted on vertex $s$ of tree $t$ if $\bullet$ is made a new leaf of $t$ with parent $s$ and the tree is otherwise unchanged.
  Let us consider the growth operator $N:H_{CK}\longrightarrow H_{CK}$ defined by:
\[N(t)=\sum_{s\in Vert(t)}N_s(t),\]
where $Vert(t)$ denotes the set of vertices of $t$ and $N_s(t)$ denotes the tree resulting from grafting $\bullet$ onto $t$ as a new child of the vertex $s$ of $t$.
For example:
\begin{align*}
N(\tun)&=\tdeux,&N(\tdeux)&=\ttroisun+\ttroisdeux,&
N(\ttroisun)&=\tquatreun+2\tquatredeux,&N(\ttroisdeux)&=\tquatredeux+\tquatrequatre+\tquatrecinq.
\end{align*}
Then define a sequence $(t_n)_{n\geq 1}$ by $t_1=\tun$, $t_{n+1}=N(t_n)$ if $n\geq 1$.
These are the generators of the Connes-Moscovici subalgebra \cite{ConnesKreimer}. The sequence begins:
\begin{align*}
    t_1 & = \tun \\ 
    t_2 &= \tdeux \\ 
    t_3 &= \ttroisdeux + \ttroisun\\
    t_4 &= \tquatrecinq + \tquatrequatre + 3\tquatredeux + \tquatreun\\
    t_5 &= \tcinqquatorze + \tcinqtreize + 3\tcinqonze + \tcinqdix + 4\tcinqhuit + 4\tcinqsix+ 3\tcinqcinq + 6\tcinqdeux + \tcinqun
\end{align*}

\end{example}

Returning to the general situation, the add-a-root operator is very important, particularly for Dyson-Schwinger equations, and is defined as follows.
\begin{defi}
\label{def::B+}
  The linear map $B_+:H_{CK}\rightarrow H_{CK}$ is defined on forest $t_1t_2 \cdots t_k$ to be the tree $B_+(t_1t_2 \cdots t_k)$ with a new root whose children are the roots of $t_1, t_2, \ldots, t_k$.
\end{defi}
We have  

\begin{equation}
\label{eq::one_cocycle_property}
\Delta(B^+(t_1t_2\cdots t_k)) = B^+(t_1t_2\cdots t_k) \tensor \One + (Id \tensor B^+) \circ \Delta(t_1t_2\cdots t_k)
\end{equation}
and hence $B_+$ is a Hochschild 1-cocycle \cite{ConnesKreimer}.

\begin{example}\label{eg ladder 2}
Continuing Example~\ref{eg ladder 1}, we see that the class of ladder trees can be defined by $\ell_1 = \tun$, $\ell_{n+1} = B_+(\ell_n)$ for $n\geq 1$.
\end{example}

If $C$ is a coalgebra with coproduct $\Delta$ and $A$ is an algebra with product $m$ then we have the convolution product of two maps $f,g:C\rightarrow A$ given by $f * g = m\circ (f\otimes g)\circ \Delta$.  The important application for us is in the algebraic definition of Feynman rules.

\begin{defi}
\label{def::Feynman_rules}
\textbf{Feynman rules} are a Hopf algebra morphism $\phi : H_{CK} \rightarrow \K[L]$, where the coalgebra structure of $\K[L]$ is determined by $\cop(L) = L \tensor 1 + 1 \tensor L$ and extended linearly.
\end{defi}

{}From Lie theory, every such map $\phi$ can be written as an exponentiation of an infinitesimal character.  Explicitly, this gives us two very concrete consequences on $\phi$.  First we can write $\phi$ in terms of a nice convolution property.
\begin{lemma}
\label{lem::algebraic_RGE}
Write $\phi_{L'}$ for the Feynman rules $\phi$ with $L'$ as the variable in the target algebra in place of $L$.  Then 
\begin{equation}
\label{eq::algebraic_RGE}
    \phi_{L_1} * \phi_{L_2} = \phi_{L_1 + L_2}
\end{equation}
where $*$ is the convolution product. 
\end{lemma}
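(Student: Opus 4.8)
The plan is to read the identity directly off the defining property of a Hopf algebra morphism, once one recognizes that the coproduct on $\K[L]$ is nothing but the substitution $L \mapsto L_1 + L_2$. Concretely, I would fix the algebra isomorphism $\Psi \colon \K[L]\tensor\K[L] \xrightarrow{\sim} \K[L_1,L_2]$ determined by $L\tensor 1 \mapsto L_1$ and $1 \tensor L \mapsto L_2$, and observe that under $\Psi$ the comultiplication $\Delta_{\K[L]}$ becomes the homomorphism sending a polynomial $p(L)$ to $p(L_1+L_2)$. This is immediate, since $\Delta_{\K[L]}$ is an algebra map with $\Delta_{\K[L]}(L) = L\tensor 1 + 1\tensor L$, so $\Delta_{\K[L]}(p(L)) = p(L\tensor1+1\tensor L)$.

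The key step is then to apply $\Psi$ to both sides of the coalgebra-morphism relation $\Delta_{\K[L]} \circ \phi = (\phi\tensor\phi)\circ\Delta_{H_{CK}}$, evaluated on an arbitrary $x \in H_{CK}$. On the left, with $p(L) = \phi(x)$, we get $\Psi(\Delta_{\K[L]}(\phi(x))) = p(L_1+L_2)$, which is by definition exactly $\phi_{L_1+L_2}(x)$. On the right, writing $\Delta_{H_{CK}}(x) = \sum x_{(1)} \tensor x_{(2)}$, we get $\Psi\big(\sum \phi(x_{(1)})\tensor\phi(x_{(2)})\big) = \sum \phi_{L_1}(x_{(1)})\,\phi_{L_2}(x_{(2)})$, which is precisely $m\circ(\phi_{L_1}\tensor\phi_{L_2})\circ\Delta_{H_{CK}}(x) = (\phi_{L_1} * \phi_{L_2})(x)$. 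Equating the two expressions yields \eqref{eq::algebraic_RGE}.

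There is no serious obstacle here: the entire content is the matching of the two tensor factors of $\K[L]\tensor\K[L]$ with the two independent scale variables $L_1$ and $L_2$, and the argument is forced once one notices that $\Delta_{\K[L]}$ encodes addition of the scale. The only points needing care are bookkeeping, namely checking that $\Psi$ really is an algebra isomorphism and that the variable-relabeling convention hidden in the notation $\phi_{L'}$ is compatible with $\Psi$ on each tensor slot; I would verify these explicitly but expect no difficulty. As an alternative, one could instead invoke the stated fact that $\phi = \exp^{*}(\psi)$ for an infinitesimal character $\psi$ and deduce the identity from $\exp^{*}(\psi_{L_1})*\exp^{*}(\psi_{L_2}) = \exp^{*}(\psi_{L_1}+\psi_{L_2})$ together with $\psi_{L_1}+\psi_{L_2} = \psi_{L_1+L_2}$ (using that $\psi$ lands in the primitives $\K L$); however, this requires developing the convolution exponential and logarithm, so the direct computation above is the cleaner route.
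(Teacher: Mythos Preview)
Your argument is correct and is the standard one: recognizing that the coproduct on $\K[L]$ encodes the substitution $L\mapsto L_1+L_2$ and then reading off the identity from the coalgebra-morphism condition $\Delta_{\K[L]}\circ\phi=(\phi\otimes\phi)\circ\Delta_{H_{CK}}$ is exactly the right approach. The paper itself does not give a proof of this lemma; it simply remarks that the identity can be viewed as an alternate definition of Feynman rules and cites an external reference (Section~A.4 of \cite{Lskript}), so there is no comparison to make beyond noting that your direct computation fills in what the paper leaves to the reader.
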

This can be thought of as the renormalization group equation in algebraic form, or as an alternate definition of the Feynman rules.  See Section A.4 of \cite{Lskript}.

Second, we can give an explicit form for all Feynman rules defined on trees.  First we need to define the tree factorial.  Given a tree $t$ and a vertex $v$ of $t$, let $t_v$ be the subtree rooted at $t$.  Then the tree factorial $t! = \prod_{v\in t} |t_v|$ where $|\cdot|$ is the number of vertices.  For example ladder trees give the usual factorial $\ell_n! = n!$.  The tree factorial of a forest is defined the same way and consequently is also the product of the tree factorials of its component trees.

\begin{lemma}
\label{lem::explicit_feynman_rules}
Let $\sigma:H_{CK}\rightarrow \K$ be an infinitesimal character.
Then
\begin{equation}
    \label{eq::explicit_tree_feynman_rules}
    \phi(F) = \sum_{S \subseteq E(F)}\bigg(\prod_{t\in (F \setminus S)}\sigma(t)\bigg) \frac{L^{|F \slash (F \setminus S)|}}{(F \slash (F \setminus S))!}
\end{equation}
defines Feynman rules $\phi$ and every choice of Feynman rules $\phi$ has this form where $\sigma$ is the infinitesimal character such that $\phi = \exp_*(L\sigma)$.
In the sum $E(F)$ is the edge set of $F$, $F \setminus S$ is the forest whose vertices are those of $F$ with the edges in $S$ removed, and $(F \slash (F \setminus S))$ means the forest $F$ with the edges of  $F \setminus S$ contracted.
The symbol $\displaystyle \prod_{t\in (F \setminus S)}$ means that the product is taken over all trees
of the forest $F\setminus S$.
\end{lemma}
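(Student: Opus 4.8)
The plan is to take as given the Lie-theoretic correspondence quoted just before the lemma—that every Feynman rules map equals $\exp_*(L\sigma)$ for a unique infinitesimal character $\sigma$, and conversely that the convolution exponential $\exp_*(L\sigma)$ of an infinitesimal character is a Hopf algebra morphism $H_{CK}\to\K[L]$ and hence is Feynman rules—and to reduce the lemma to the purely combinatorial identity that $\exp_*(L\sigma)(F)$ equals the right-hand side of \eqref{eq::explicit_tree_feynman_rules}. Since $\phi$ is an algebra morphism, $\phi(F)=\prod_{t}\phi(t)$ over the tree components $t$ of $F$, and the right-hand side factors the same way (edge subsets, blocks, tree factorials, and powers of $L$ all decompose over components); so it suffices to run the argument uniformly over forests.

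First I would expand the convolution exponential as $\exp_*(L\sigma)(F)=\sum_{m\ge 0}\tfrac{L^m}{m!}\,\sigma^{*m}(F)$, where $\sigma^{*m}$ is the $m$-fold convolution power, computed from the iterated coproduct $\Delta^{(m)}$ (the sum is finite on each graded piece since $H_{CK}$ is connected graded). The key structural input is the standard layered description of $\Delta^{(m)}$ for the Connes--Kreimer coproduct: $\Delta^{(m)}(F)=\sum_f\bigotimes_{i=1}^m F^f_i$, summed over labelings $f\colon V(F)\to\{1,\dots,m\}$ that are monotone along edges (each vertex's label is at most its parent's, with the root-containing factor rightmost), where $F^f_i$ is the subforest induced by $f^{-1}(i)$. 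I would verify this by induction from the single coproduct, matching one admissible cut with the threshold between two consecutive levels.

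Next I would use that $\sigma$ is an infinitesimal character, so $\sigma(\One)=0$ and $\sigma$ vanishes on any forest with two or more trees. Hence a labeling $f$ contributes to $\sigma^{*m}(F)$ only when every level $f^{-1}(i)$ is a single nonempty subtree; in particular all $m$ labels are used and the level classes are exactly the blocks of a partition of $V(F)$ into connected subtrees. Reindexing the surviving labelings by this partition—equivalently by the set $S\subseteq E(F)$ of edges joining distinct blocks, so that the blocks are the components of $F\setminus S$—the product $\prod_i\sigma(F^f_i)$ becomes $\prod_{t\in(F\setminus S)}\sigma(t)$, and $m$ becomes the number of blocks $b=|F/(F\setminus S)|$. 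For fixed $S$ the number of surviving labelings is the number of strictly monotone bijections from the block poset to $\{1,\dots,b\}$, i.e. the number of linear extensions of the forest poset $F/(F\setminus S)$.

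The combinatorial heart, and the step I expect to carry the real weight, is the hook-length formula for rooted forests: the number of linear extensions of $F/(F\setminus S)$ equals $b!\big/(F/(F\setminus S))!$, where the tree factorial supplies precisely the product of subtree sizes. Substituting this count, the factor $\tfrac{1}{m!}=\tfrac{1}{b!}$ cancels the $b!$, the power of $L$ is $L^{b}=L^{|F/(F\setminus S)|}$, and summing over all $S\subseteq E(F)$ reproduces \eqref{eq::explicit_tree_feynman_rules} term by term. The remaining bookkeeping—establishing the monotone-labeling form of $\Delta^{(m)}$ and checking that adjacent blocks receive strictly decreasing labels—is routine once the hook-length identity is in hand.
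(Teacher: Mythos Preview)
Your proposal is correct and follows exactly one of the two routes the paper itself suggests: the paper's proof is a one-line pointer saying the result follows either by induction from Lemma~\ref{lem::algebraic_RGE} or ``by explicitly writing out the exponential form $\phi=\exp_*(L\sigma)$ using the series expansion of $\exp$,'' and you have carried out the second option in full detail. Your use of the monotone-labeling description of $\Delta^{(m)}$, the reduction via the infinitesimal-character vanishing to partitions into connected blocks indexed by $S\subseteq E(F)$, and the hook-length count of linear extensions to produce the $b!/(F/(F\setminus S))!$ factor are all sound and constitute precisely the combinatorics the paper leaves implicit.
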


Note here that when an edge is contracted then its two incident vertices are identified, so contracting an edge of a tree results in a tree with one vertex less, while when an edge is removed, it is deleted without any further change to its incident vertices, so removing an edge of a tree results in a forest of two trees.

\begin{proof}
Both directions can be proved straighforwardly by induction from Lemma \ref{lem::algebraic_RGE} or by explicitly writing out the exponential form $\phi = \exp_*(L\sigma)$ using the series expansion of $\exp$.
\end{proof}

We can observe a few facts directly from the explicit form of the tree Feynman rules.  First $\phi(t)$ has $0$ constant term and degree at most $|t|$ in $L$.  The leading term of $\phi(t)$ is $(\sigma(\bullet)L)^{|t|}/t!$.  The linear term of $\phi(t)$ is $\sigma(t)L$.  If $\sigma(\bullet)=1$ and $\sigma$ is $0$ on all other trees then $\phi(t) = L^{|t|}/t!$ which are known as the \textbf{\textit{tree factorial Feynman rules}}.

\bigskip

The reader may find it valuable to understand the connection between these structures and quantum field theory, so we will briefly outline this here.  

Feynman diagrams in quantum field theory give expansions of amplitudes and other physical quantities of interest.  The diagrams are graphs with edges representing particles and vertices representing interactions.  Each graph is associated with an integral via rules which say how to build the integrand out of factors for the edges and the vertices.  These are the Feynman rules.  The Feynman diagrams we are most interested in give divergent integrals.  Renormalization is the process used to fix this and obtain finite quantities.  Part of the subtlety of renormalization is that subdiagrams of a Feynman diagram may already diverge.  One way to do renormalization involves subtracting off divergent subdiagrams in a particular way that was recognized by Kreimer in 1997 as being given by the antipode of a Hopf algebra \cite{Kreimer1997}.

Since what matters most here is the structure of divergent subgraphs inside larger graphs, it is a useful abstraction to only remember this structure rather than the graphs themselves.  We use the Connes-Kreimer Hopf algebra to do this.  In the case that the structure of divergent subgraphs is tree-like then the graph corresponds simply to that tree.  In the case that there are divergent subgraphs that overlap without one being entirely within the other, then the graph corresponds to a sum of trees giving the different ways of picking a maximal tree-like set of divergent subgraphs.  Renormalization is encoded by the antipode of the Connes-Kreimer Hopf algebra \cite{ConnesKreimer}.

The $B_+$ operator on trees corresponds to insertion of subgraphs into another graph.
The Feynman rules on the Feynman diagrams then become maps from trees to some appropriate algebra.  The Feynman rules should be compatible with the Hopf algebra, giving Definition \ref{def::Feynman_rules}.  The $L$ of the target algebra for the Feynman rules corresponds to the logarithm of the momentum running through the Feynman diagram, or the logarithm of the energy scale.

\subsection{Green functions and Dyson-Schwinger equations}

We are interested not so much in single trees as in sequences of trees, or sequences of linear combinations of trees.  Many of the most important examples from the physics perspective are given by functional equations using $B_+$.

\begin{defi}
\label{def::combinatorial_DSE}
A \textbf{\emph{combinatorial Dyson-Schwinger equation}} in $H_{CK}$ is an equation of the form
\[
X(x) = xB_+(f(X(x))
\]
where $f$ is a formal power series with constant term equal to $1$. 
This equation admits a unique solution $X(x)\in H_{CK}[x]$ defined recursively. See Proposition 2 of \cite{FoissyDyson}.
\end{defi}

Writing the same equation without the indeterminate $x$, by the same argument, we get a unique solution $X$ in the graded completion of $H_{CK}$.  All $x$ is doing is keeping track of the graded pieces and it is a matter of taste whether to include it or not.
 Some authors, including one of us in other work, tends to write combinatorial Dyson-Schwinger equations in the slightly different form
\[
Y(x) = 1+xB_+(g(Y(x))) 
\]
but after the substitution $X(x) = Y(x)-1$ and $f(z) = g(z+1)$ the only difference is in whether or not the solution includes a constant term.

A past result of one of us is a characterization of when such Dyson-Schwinger equations have solutions $X(x)$ for which the algebra generated by the coefficients of $X(x)$ is a Hopf subalgebra of $H_{CK}$.  Specifically:

\begin{theo}[Theorem 4 of \cite{FoissyDyson}]
\label{thm::Foissy_characterization_DSE}
Let $f \in \K[[x]]$ such that $f(0) = 1$, and let $A_f$ denote the algebra generated by the coefficients of the unique solutions $X(x)$ to the combinatorial Dyson-Schwinger equation  $X(x) = xB_+(f(X(x))$. Then the following are equivalent:
\begin{enumerate}
    \item $A_f$ is a Hopf subalgebra of $H_{CK}$
    \item There exists $(\alpha, \beta) \in \K^2$ such that $(1 - \alpha\beta x)f'(x) = \alpha f(x)$ 
    \item There exists $(\alpha, \beta) \in \K^2$ such that 
    \begin{enumerate}
        \item $f(x) = 1$ if $\alpha = 0$
        \item $f(x) =e^{\alpha x}$ if $\beta = 0$
        \item $f(x) = (1 - \alpha \beta x)^{-\frac{1}{\beta}}$ if $\alpha \beta \neq 0$.
    \end{enumerate}
\end{enumerate}
\end{theo}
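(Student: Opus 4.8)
The plan is to treat the equivalence (2) $\Leftrightarrow$ (3) as an elementary integration and to concentrate the real work on (1) $\Leftrightarrow$ (2). Writing $f(x) = \sum_{k \ge 0} p_k x^k$ with $p_0 = 1$, comparing coefficients in $(1-\alpha\beta x)f'(x) = \alpha f(x)$ turns condition (2) into the two-term recursion
\[
(k+1)\,p_{k+1} = \alpha(1 + \beta k)\,p_k \qquad (k \ge 0),
\]
so that $p_1 = \alpha$ and $(\alpha,\beta)$ is read off from low-order data. Solving this recursion (equivalently, separating variables in the ODE and using $f(0)=1$ to fix the constant of integration) gives exactly the three closed forms of (3): $\alpha = 0$ forces $p_k = 0$ for $k \ge 1$ and hence $f = 1$; $\beta = 0$ gives $p_k = \alpha^k/k!$ and hence $f = e^{\alpha x}$; and $\alpha\beta \ne 0$ gives the binomial coefficients of $(1-\alpha\beta x)^{-1/\beta}$. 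Since each closed form conversely satisfies the ODE, (2) $\Leftrightarrow$ (3) is routine and I would dispatch it first.

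For the substance, I would expand the solution as $X(x) = \sum_{n \ge 1} X_n x^n$ with $X_n$ homogeneous of degree $n$; writing $[\,\cdot\,]_m$ for the coefficient of $x^m$, the Dyson--Schwinger recursion reads $X_n = B_+\big([f(X(x))]_{n-1}\big)$, and $(A_f)_n$ is spanned by the monomials in $X_1,\dots,X_n$ of total degree $n$. Because $A_f$ is a subalgebra by construction and $H_{CK}$ is graded connected, $A_f$ is a Hopf subalgebra if and only if it is a subcoalgebra, i.e. $\Delta(X_n) \in A_f \tensor A_f$ for every $n$ (the antipode then restricts automatically). Applying the $1$-cocycle identity \eqref{eq::one_cocycle_property} to $X_n = B_+([f(X(x))]_{n-1})$ gives the recursive coproduct formula
\[
\Delta(X_n) = X_n \tensor \One + (\mathrm{id}\tensor B_+)\,\Delta\big([f(X(x))]_{n-1}\big),
\]
which is the engine for both directions. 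A useful first observation is that several bihomogeneous components close up automatically: the right-degree-$1$ part of $\Delta(X_n)$ equals $[f(X(x))]_{n-1} \tensor \tun$, whose left factor lies in $A_f$ because $X$, and hence $f(X)$, lies in the graded completion of $A_f$.

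For (2) $\Rightarrow$ (1) I would induct on $n$, feeding the coefficient recursion on the $p_k$ into the coproduct formula to show that every remaining bihomogeneous component of $\Delta(X_n)$ is assembled from lower $X_k$'s and so lands in $A_f \tensor A_f$; here the cases $f = e^{\alpha x}$ and $f = (1-\alpha\beta x)^{-1/\beta}$ reproduce the Connes--Moscovici and the Fa\`a di Bruno type subalgebras respectively. For (1) $\Rightarrow$ (2) I would run the converse: assuming closure, extract from $\Delta(X_n)$ the coefficient of a carefully chosen tensor (for instance the component isolating the corolla $B_+(\tun^{\,n-1})$, which carries $p_{n-1}$, against the lower data) to read off, for each $n$, a single scalar constraint, and then show by induction that these constraints are precisely the two-term recursion above, with $\alpha$ and $\beta$ pinned down at the first two nontrivial degrees. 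The main obstacle, and the place where genuine care is needed, is exactly this last step: one must verify that the single functional equation (2) encodes the whole infinite family of closure conditions — that each higher degree produces only a \emph{two-term} relation between $p_{n-1}$ and $p_{n-2}$ with the prescribed coefficients, and no new independent constraint. I expect this to follow from the automatic closure of the components noted above together with a bookkeeping argument collapsing every genuine constraint onto that single relation; organizing that bookkeeping cleanly, rather than the low-degree verifications, is the crux.
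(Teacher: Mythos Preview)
The paper does not prove this theorem; it is quoted verbatim as ``Theorem 4 of \cite{FoissyDyson}'' and used as background, with no argument given. So there is nothing in the present paper to compare your proposal against.

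On the proposal itself as a standalone sketch: the plan for (2) $\Leftrightarrow$ (3) is fine, and the strategy for (1) $\Leftrightarrow$ (2) via the $1$-cocycle identity and induction on $n$ is indeed the route taken in Foissy's original paper. Two remarks. First, a factual slip: the case $f = e^{\alpha x}$ does \emph{not} reproduce the Connes--Moscovici subalgebra. As the present paper notes explicitly after Theorem~\ref{thm::Foissy_characterization_DSE}, the Connes--Moscovici subalgebra is not the solution of any combinatorial Dyson--Schwinger equation; the exponential case yields a different Hopf subalgebra. Second, what you have is an outline, not a proof: the step you flag as the crux --- showing that closure of $\Delta(X_n)$ in $A_f\otimes A_f$ for all $n$ collapses to exactly the two-term recursion on the $p_k$ --- is where all the content is, and in Foissy's paper it is carried out by an explicit computation of the left tensor factors of $\Delta(X_n)$ in each right-degree, not merely by ``bookkeeping''. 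Until that computation is done, (1) $\Rightarrow$ (2) remains open in your write-up.
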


\begin{example}\label{eg binary}
For example, choosing $f(x) = (1+x)^2$ in the theorem is equivalent to choosing $\alpha = 2, \beta = -\frac{1}{2}$.  Writing the formal power series expansion of $X(x) = \sum_{i\geq 0}{x_i}x^i$ then the sequence of $x_i$s gives the sequence of \textbf{\textit{binary rooted trees}}, namely the sequence of linear combinations of trees where each term is a binary tree having coefficient the number of ways to assign left and right children for every vertex. This sequence begins:
\begin{align*}
    x_0 &= \One\\
    x_1 &= \tun \\
    x_2 &= 2\tdeux \\
    x_3 &= 4\ttroisdeux + \ttroisun   \\
    x_4 &= 8\tquatrecinq +  4\tquatredeux + 2\tquatrequatre 
\end{align*}
\end{example}

\begin{example}\label{eg plane}
Another important example is $f(x) = 1/(1-x)$, or equivalently $\alpha=\beta=1$.  In this case, again writing $X(x)=\sum_{i\geq 0} x_ix^i$ we that the sequence of $x_i$ gives all rooted trees weighted by their number of plane embeddings.
\begin{align*}
    x_0 &= \One\\
    x_1 &= \tun \\
    x_2 &= \tdeux \\
    x_3 &= \ttroisdeux + \ttroisun   \\
    x_4 &= \tquatrecinq +  2\tquatredeux + \tquatrequatre + \tquatreun 
\end{align*}
\end{example}

\begin{example}\label{eg ladder 3}
Note that the solution with $\alpha = 1, \beta = -1$ is combinatorially special, and is equivalent to setting $f(x) = 1+x$ in Theorem \ref{thm::Foissy_characterization_DSE}. This case gives the ladder trees of Example \ref{eg ladder 2}.
\end{example}

We get from the combinatorial Dyson-Schwinger equation and its solution to the \textbf{\emph{physical (or analytic) Dyson-Schwinger equation}} by applying the Feynman rules which we notate $\phi$.  For the solution $X(x)$ to a combinatorial Dyson-Schwinger equation let
\[
G(x,L) = \phi(X(x))
\]
be the corresponding \textbf{\emph{Green function}}, or the solution to the corresponding physical Dyson-Schwinger equation.  For us, the Green function is a formal power series in $x$ and $L$.

\medskip

We will, as before, conclude this subsection with some additional comments on the connection to the quantum field theory, which the uninterested reader can again skip.

Dyson-Schwinger equations in quantum field theory are the quantum analogues of the equations of motion.  When expanded in Feynman diagrams, they can be written as recurrence equations for graphs based on insertion.  This diagrammatic form is what our combinatorial Dyson-Schwinger equations on trees correspond to via the correspondence between Feynman diagrams in terms of their insertion structure and rooted trees, as described in the previous subsection.

The diagrammatic solution to a physically relevant Dyson-Schwinger equation should give a Hopf subalgebra since renormalization should be well defined when restricted to the solution.  The Dyson-Schwinger equations which come up in quantum field theory are all of the form in Theorem \ref{thm::Foissy_characterization_DSE}, confirming this.  This also motivates why Hopf subalgebras are physically interesting, particularly ones with one generator in each degree.  Note that the Connes-Moscovici Hopf subalgebra of Example \ref{ex::Connes_Moscovici} is also a Hopf subalgebra with one generator in each degree but is not the solution to a Dyson-Schwinger equation.

As usual, the Feynman diagrams in quantum field theory are really just shorthand for their Feynman integrals.  Applying the Feynman rules to the Dyson-Schwinger equation at the level of Feynman diagrams we can use the way that $B_+$ interacts with the Feynman rules, see for instance section 3.9 of \cite{Lskript}, in order to replace the insertion operator with an integral operator.  The physical Dyson-Schwinger equations are, then, integral equations for the Green functions and this is how they can be found in perturbative quantum field theory sources.  The solutions are the Green functions.

Most of the time we would have not one Dyson-Schwinger equation but a coupled system of equations, one for each propagator and vertex in the theory, and potentially for higher $n$-point functions as well.  The system case can also be interesting for trees \cite{Fsys}, but we will not consider it in this paper.

\subsection{The renormalization group equation and the $\beta$-function}

\label{subsect::RGE_and_beta_function}

Even more important in quantum field theory than Dyson-Schwinger equations is the renormalization group equation. So far we have treated our Green functions $G(x,L)$ as formal series in $x$ and $L$, however, physically $x$ is the coupling, representing the strength of the particle interactions, while $L$ is the log energy.  Naively one might think that $x$ then should be a constant (and hope it is small), but one consequence of renormalization is that $x$ changes with the energy level.  This is known as the \textbf{\textit{running}} of the coupling.

The message for the moment is simply that change in $x$ and change in $L$ are not independent.  The renormalization group equation captures how change in $x$ and change in $L$ affect $G(x,L)$.  Specifically
\begin{equation}
    \label{eq::renormalization_group_equation_analytic}
    \bigg( \frac{\partial}{\partial L} + \beta(x)\frac{\partial}{\partial x} - \gamma(x) \bigg)G(x,L) = 0
\end{equation}

For us $\beta(x)$ and $\gamma(x)$ are simply formal series determined by the physics, though it can be helpful to keep in mind that $\beta$ is physically encoding the flow of the coupling depending on the energy scale.  Some sources pull out a factor of $x$ writing $x\beta(x)$ where we write $\beta(x)$.  Note that if $\beta(x)$ is identically $0$ then the form of the renormalization group equation simplifies substantially and it can be straightforwardly solved by $G(x,L) = \exp(L\gamma(x))$.  This is known as a \textbf{\textit{pure scaling solution}} and is the case physically when the coupling does not run.  The main message here is only that the case when $\beta(x)$ is identically $0$ is a special case recognized by physics.

\medskip

The goal of the present section is to re-interpret the renormalization group equation as a certain linearity condition on the sequence of trees underlying $G(x,L)$.  This reinterpretation comes from a visit of one of the authors to Spencer Bloch and Dirk Kreimer in Chicago in 2014 and owes a particularly large debt to an unpublished note by Bloch during that time \cite{Bnote}.

We begin with $\displaystyle X(x) = 1+\sum_{n=1}^\infty x^nt_n$ where each $t_n\in H_{CK}$ is homogeneous of weight $n$.  Note that we do not assume that $X(x)$ comes from a combinatorial Dyson-Schwinger equation, though such examples are of particular note.  We only assume that the algebra generated by the $t_n$ is a Hopf subalgebra and that $\displaystyle G(x,L) = \phi(X(x)) = 1+\sum_{n=1}^\infty x^n\phi(t_n)$ satisfies the renormalization group equation \eqref{eq::renormalization_group_equation_analytic}.

To make the later indexing more convenient we will use the following notation for the expansions of $\beta(x)$ and $\gamma(x)$
\[
    \beta(x) = \sum_{n = 1}^{\infty}(-\beta_n)x^{n+1} \qquad  \gamma(x) = \sum_{n = 0}^{\infty}\gamma_nx^n.
\]
We will also use the notation $Q_n(L)$ for the polynomial in $L$ obtained by applying $\phi$ to $t_n$.

To begin the argument, we will not even assume that $G(x,L)$ satisfies the renormalization group equation and see how much information we can obtain simply with the subHopf property along with the fact that the Feynman rules are a Hopf algebra morphism.

Write
\begin{equation}\label{eq def of tau}
\cop(t_n) =  \sum_{i = 0}^{n}\tau_{n, n-i} \tensor t_i
\end{equation}
observing that $t_0 = \tau_{n, 0} = 1$ and $\tau_{n,n} = t_n$ and the $\tau_{n,n-i}$ are polynomials in the $t_j$ for $j<n$.  Then Lemma \ref{lem::algebraic_RGE} gives
\begin{align*}
  Q_n(L_1 + L_2) &= m \circ(\phi_{L_1} \tensor \phi_{L_2}) \circ \cop(t_n)\\
  & =  \sum_{i = 0}^{n}\phi(\tau_{n, n-i})(L_1)Q_i(L_2)
\end{align*}
where $\phi(\tau_{n, n-i})(L_1)$ means substituting $L$ with $L_1$ in $\phi(\tau_{n, n-i})\in \K[L]$.
Consequently
\[
\frac{\partial}{\partial L_2}Q_n(L_2) = \frac{\partial}{\partial L_2}Q_n(L_1 + L_2)|_{L_1 = 0} = \sum_{i = 0}^{n}\frac{\partial}{\partial L_1}\phi(\tau_{n, n-i})(L_1)|_{L_1 = 0}Q_i(L_2).
\]
Note that $\dfrac{\partial}{\partial L_1}\phi(\tau_{n, n-i})(L_1)|_{L_1 = 0} \in \K$, so to emphasize that it is a constant define
\[
c_{n,i} := \frac{\partial}{\partial L_1}\phi(\tau_{n, n-i})(L_1)|_{L_1 = 0}
\]
and substituting $L$ for $L_2$ and summing over $n$ we have
\begin{equation}
    \label{eq::differentiated_greens_function_2}
    \frac{\partial }{\partial L}G(x,L) = \sum_{n = 1}^{\infty}\bigg( \sum_{i = 0}^{n}c_{n,i}Q_i(L)\bigg)x^n 
\end{equation}

Next, from our expansion of $\gamma(x)$ and $\beta(x)$ and the definition of $Q_i(L)$ we calculate
\begin{equation}\label{eq other side}
   \left(\gamma(x) - \beta(x)\frac{\partial}{\partial x}\right)G(x,L)
   =  \sum_{n = 0}^{\infty}\bigg(\sum_{i = 0}^{n}\gamma_{n-i}Q_i(L)\bigg)x^n    +\sum_{n = 1}^{\infty} \bigg(\sum_{i = 1}^{n}i\beta_{n - i}Q_i(L) \bigg)x^n
\end{equation}

Now adding the assumption that $G(x,L)$ satisfies the renormalization group equation we see that the left hand sides of \eqref{eq::differentiated_greens_function_2} and \eqref{eq other side} are equal, so comparing coefficients of $x^n$ on the right sides of each equation we obtain
\begin{equation}\label{eq extract coeffs in x}
    \sum_{i = 0}^{n}c_{n,i}Q_i(L)) =  \sum_{i = 0}^{n}\gamma_{n-i}Q_i(L)   +\sum_{i = 1}^{n}i\beta_{n - i}Q_i(L)
\end{equation}
for $n\geq 1$.
    {}From Lemma \ref{lem::explicit_feynman_rules} we see that provided $\sigma(\bullet)$ is non-zero, $Q_i(L)$ has degree $i$ in $L$ and so the $Q_i(L)$ are linearly independent.  Therefore
\[
\gamma_{n} = c_{n,0} \hspace{1cm} \text{for all $n \geq 1$}
\] and for $n\geq 1$ and $1\leq i\leq n$
\begin{equation}
    \label{eq::c_nk_relationship}
    c_{n, i} = c_{n-i,0} + i\beta_{n-i}
\end{equation}
This latter equation tells us that for any fixed value $k\geq 1$ for $n-i$, the resulting sequence $c_{k+i, i}$ is an arithmetic progression, or equivalently is linear as a function of $i$.  So if $G(x,L)$ satisfies a renormalization group equation then the sequences $c_{k+i,i}$ are linear in $i$.

What about the converse?  Suppose we have a $G(x,L)$ for which the algebra generated by the $t_i$ is Hopf and define the $c_{n,k}$ as above.  If the sequences $c_{k+i, i}$ are each linear in $i$ then \eqref{eq::c_nk_relationship} is satisfied for $n\geq 1$ and suitable values of $\beta_{n-i}$, except possibly for the $i=0$ terms. Hence \eqref{eq extract coeffs in x} is satisfied except possibly for the $i=0$ terms.  Note that $n=0$ implies $i=0$ so any discrepancy when $n=0$ is accounted for among the $i=0$ terms.  The $i=0$ terms give a series in $x$.  Since the two sides of \eqref{eq extract coeffs in x} are the coefficients in the $Q_i$ of the right hand sides of \eqref{eq::differentiated_greens_function_2} and \eqref{eq other side} this implies that the left hand sides of those two equations are also equal up to possibly a series in $x$.  Therefore $G(x,L)$ satisfies a generalization of the renormalization group equation of the form
\begin{equation}\label{eq gen lin rge}
\bigg( \frac{\partial}{\partial L} + \beta(x)\frac{\partial}{\partial x} - \gamma(x) \bigg)G(x,L) = \gamma_0(x).
\end{equation}
This discrepancy from the usual renormalization group equation comes from the fact that we discarded the constant terms in building our recurrences thus leading to the need for $\gamma_0$. We will allow $\gamma_0$ in our renormalization group equations in what follows, though the homogeneous case, when $\gamma_0=0$ is of the most physical interest.  The role of $\gamma_0$ relates to the difference between ladders and chains in quantum field theory, as we'll discuss in the examples below.  First there is one more step to obtain the purely combinatorial linearity condition mentioned earlier.    

It remains to interpret this linearity purely on the level of the trees.  To this end it is handy to visualize these sequences by writing the $c_{n,i}$ in a triangle.
\setlength{\tabcolsep}{2pt}
\begin{center}
\begin{tabular}{ccccccccccccccc}
 & & & & & & & $c_{1,0}$ & & & & & & &  \\ 
 & & & & & & $c_{2,1}$ & & $c_{2,0}$ & & & & & & \\  
 & & & & & $c_{3,2}$ & & $c_{3,1}$ & & $c_{3,0}$ & & & & & \\
 & & & & $c_{4,3}$ & & $c_{4,2}$&  &$c_{4,1}$ &  &$c_{4,0}$ & & & & \\
 & & &$c_{5,4}$ &  & $c_{5,3}$ & & $c_{5,2}$ & & $c_{5,1}$ & & $c_{5,0}$ & & & \\
  & &  & &  & &  & $\vdots$ &  & &  & &  & & 
\end{tabular}
\end{center} 
where using the relationship (\ref{eq::c_nk_relationship}) we obtain:
\setlength{\tabcolsep}{2pt}
\begin{center}
\begin{tabular}{ccccccccccccccc}
 & & & & & & & $c_{1,0}$ & & & & & & &  \\ 
 & & & & & & $\beta_1 + c_{1,0}$ & & $c_{2,0}$ & & & & & & \\  
 & & & & & $2\beta_1 + c_{1,0}$ & & $\beta_2 + c_{2,0}$ & & $c_{3,0}$ & & & & & \\
 & & & & $3\beta_1 + c_{1,0}$ & & $2\beta_2 + c_{2,0}$&  &$\beta_3 + c_{3,0}$ &  &$c_{4,0}$ & & & & \\
 & & &$4\beta_1 + c_{1,0}$ &  & $3\beta_2 + c_{2,0}$ & & $2\beta_3 + c_{3,0}$ & & $\beta_4 + c_{4,0}$ & & & $c_{5,0}$  & & \\
  & &  & &  & &  & $\vdots$ &  & &  & &  & & 
\end{tabular}
\end{center}
and where $c_{n,i} = 0$ for all pairs of $n$ and $i$ where $i \geq n$.  Note the linear sequences in the leftward diagonals.

The following proposition gives these coefficients combinatorial meaning.
\begin{prop}\label{prop and def of lambda}
Fix arbitrary Feynman rules $\phi$ and let $\sigma$ be the corresponding infinitesimal character from Lemma \ref{lem::explicit_feynman_rules}. For $i+j = n$, let $\lam_{i, j}$ denote the coefficient of $t_{j} \tensor t_{i}$ in $\cop(t_n)$. Then $c_{n,i} = \lam_{n-i,i}\sigma(t_{n-i})$.
\end{prop}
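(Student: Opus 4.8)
The plan is to read off $c_{n,i}$ directly as the linear-in-$L$ coefficient of $\phi(\tau_{n,n-i})$ and to show that the only part of $\tau_{n,n-i}$ that can produce such a linear term is its single-generator component $\lam_{n-i,i}\,t_{n-i}$.

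First I would recall from its definition that $c_{n,i}=\partial_{L_1}\phi(\tau_{n,n-i})(L_1)|_{L_1=0}$ is exactly the coefficient of $L$ in the polynomial $\phi(\tau_{n,n-i})$. Since $\tau_{n,n-i}$ is homogeneous of degree $n-i$, I would expand it in the monomial basis of the Hopf subalgebra generated by the $t_j$, writing it as a $\K$-linear combination of products $t_{j_1}\cdots t_{j_m}$ with $j_1+\cdots+j_m=n-i$. The unique single-generator monomial among these is $t_{n-i}$ itself, and by \eqref{eq def of tau} its coefficient in $\tau_{n,n-i}$ equals $\lam_{n-i,i}$, the coefficient in $\cop(t_n)$ of the term whose left factor is the generator $t_{n-i}$ and whose right factor is $t_i$.

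The key step is to compute the linear term of $\phi$ on each monomial. As $\phi$ is a Hopf algebra morphism it is multiplicative, so $\phi(t_{j_1}\cdots t_{j_m})=\phi(t_{j_1})\cdots\phi(t_{j_m})$; and by the observations following Lemma \ref{lem::explicit_feynman_rules} the polynomial $\phi(t_j)$ has zero constant term for every generator (each $t_j$ being a combination of trees of positive weight). Hence any monomial with $m\geq 2$ factors is sent to a polynomial divisible by $L^2$ and contributes nothing to the linear coefficient. Only the single-generator term survives, and again by those observations the linear part of $\phi(t_{n-i})$ is $\sigma(t_{n-i})L$, extended linearly from trees to the combination $t_{n-i}$. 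Therefore the coefficient of $L$ in $\phi(\tau_{n,n-i})$ equals $\lam_{n-i,i}\sigma(t_{n-i})$, which is the claimed value of $c_{n,i}$.

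I expect the only genuine subtlety to lie in the second step: correctly identifying $\lam_{n-i,i}$ as the coefficient of the single generator $t_{n-i}$ inside $\tau_{n,n-i}$ (rather than the coefficient of some product of lower-degree generators) and confirming via \eqref{eq def of tau} that this matches the corresponding coefficient of $\cop(t_n)$. Everything else is a routine consequence of $\phi$ being an algebra map whose value on each generator vanishes at $L=0$, together with the explicit linear term $\sigma(\cdot)L$ recorded after Lemma \ref{lem::explicit_feynman_rules}.
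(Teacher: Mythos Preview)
Your proposal is correct and follows essentially the same argument as the paper: both identify $c_{n,i}$ as the linear-in-$L$ coefficient of $\phi(\tau_{n,n-i})$, decompose $\tau_{n,n-i}$ into the single-generator term $\lambda_{n-i,i}t_{n-i}$ plus products of at least two generators, and use multiplicativity of $\phi$ together with the vanishing constant term of each $\phi(t_j)$ to conclude that only the single-generator term contributes, yielding $\lambda_{n-i,i}\sigma(t_{n-i})$.
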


\begin{proof}
Using notation from the previous discussion, $c_{n,i}$ is defined as:
\[
c_{n,i} := \frac{\partial}{\partial L_1}\phi(\tau_{n, n-i})(L_1)|_{L_1 = 0}
\]
Substituting in our expression for $\phi$ in terms of $\sigma$ (equation \eqref{eq::explicit_tree_feynman_rules}):

\[
c_{n,i} = \frac{\partial}{\partial L_1} \sum_{S \subseteq E(\tau_{n, n-i})}\bigg(\prod_{t\in (\tau_{n, n-i} \setminus S)}\sigma(t)\bigg) \frac{L_1^{|\tau_{n, n-i} \slash (\tau_{n, n-i} \setminus S)|}}{(\tau_{n, n-i} \slash (\tau_{n, n-i} \setminus S))!}|_{L_1 = 0}
\]
Hence each $c_{n,i}$ is obtained from a polynomial in $L_1$. In particular, after performing the indicated derivative and setting $L_1 = 0$, we are left with only the coefficient of the linear term of this polynomial. As discussed following Lemma \ref{lem::explicit_feynman_rules}, for $t$ a tree we have that the linear term of $\phi(t)$ is simply $L\sigma(t)$. Moreover, since $\phi$ is by definition an algebra homomorphism, the lowest degree term of $\phi(F)$ for any forest $F$ is the number of trees in $F$. In particular, $\phi(F)$ has a linear term and contributes to $c_{n,i}$ only if $F$ is a tree. Recalling that $\tau_{n,n-i}$ is a polynomial in the $t_i$'s, we see that
$$\tau_{n,n-i} = \lambda_{n-i, i}t_{n-i} + \text{products of $t_{j}$'s}$$
for some $\lambda_{n-i,i} \in \K$. Since every term other than $\lambda_{n-i, i}t_{n-i}$ is a forest the result follows.
\end{proof}

We can rewrite the array of the $c_{n,i}$, setting $c_{n,i} = \lam_{n, n-i}\sigma(t_{n-i})$ once again for arbitrary infinitesimal character $\sigma: H_{CK} \rightarrow \K$. This leads to:
\begin{figure}[H]
    \centering
      \begin{tabular}{ccccccccccccccc}
    & & & & & & & $\lambda_{1,1}\sigma(t_1)$ & & & & & & &  \\ 
    & & & & & & $\lambda_{2,1}\sigma(t_1)$ & & $\lambda_{1,2}\sigma(t_2)$ & & & & & & \\ 
    & & & & & $\lambda_{3,1}\sigma(t_1)$ & & $\lambda_{2,2}\sigma(t_2)$ & & $\lambda_{1,3}\sigma(t_3)$ & & & & & \\
    & & & & $\lambda_{4,1}\sigma(t_1)$ & & $\lambda_{3,2}\sigma(t_2)$ & & $\lambda_{2,3}\sigma(t_3)$ & & $\lambda_{1,4}\sigma(t_4)$ & & & & \\ 
    & & & $\lambda_{5,1}\sigma(t_1)$&  & $\lambda_{4,2}\sigma(t_2)$&  & $\lambda_{3,3}\sigma(t_3)$&  & $\lambda_{2,4}\sigma(t_4)$&  &$\lambda_{1,5}\sigma(t_5)$ & & & \\ 
    & & & & & & & $\vdots$&
\end{tabular}
    \caption{An array of coefficients.}
    \label{fig::array_of_coefficients}
\end{figure}

\noindent So the condition on the leftward diagonals giving linear sequences is now a statement that the $\lambda_{n, i}$ give linear sequences in $n$ or possibly that some $\sigma(t_i)$ are zero.
If $G(x,L)$ satisfies the renormalization group equation for suitably generic $\phi$, then all the $\lambda_{n,i}$ will be first order sequences.

The converse is slightly more subtle.  In moving to the $\lambda_{n,i}$ we have gotten rid of the outer diagonals of the triangle, where at least one index is $0$.  However, this is exactly accounted for by the $\gamma_0$ in our generalized renormalization group equation, and so if all the leftward diagonal sequences are linear then the associated $G(x,L)$ satisfies a generalized renormalization group equation of the form in \eqref{eq gen lin rge}.

\begin{example}\label{eg ladder 4}
Consider again the ladders from Example~\ref{eg ladder 3}.  Their Green function, with general tree Feynman rules of Lemma~\ref{lem::explicit_feynman_rules}, is
\begin{align*}
G(x,L) & = \sum_{n\geq 0}\phi(\ell_n)x^n \\
& = \sum_{n\geq 0}x^n\sum_{S \subseteq E(\ell_n)}\bigg(\prod_{t\in (\ell_n \setminus S)}\sigma(t)\bigg) \frac{L^{|\ell_n \slash (\ell_n \setminus S)|}}{(\ell_n \slash (\ell_n \setminus S))!} \\
& = \sum_{\substack{n\geq 0\\k\geq 0}}\sum_{\lambda_1+\cdots+ \lambda_k = n} \bigg(\prod_{i=1}^k\sigma(\ell_{\lambda_i})x^{\lambda_i}\bigg) \frac{L^{k}}{k!} \\
& = \exp\left(L\sum_{i\geq 1}\sigma(\ell_i)x^{i}\right)
\end{align*}
where the third equality comes from the observation that cutting a ladder tree at a subset of its edges breaks the ladder into smaller ladders whose sizes form a composition of the size of the original ladder and this correspondence between cuts and integer compositions is bijective.

We can see directly that this satisfies the original renormalization group equation in the special case $\beta(x)=0$.  The triangle of coefficients $\lambda_{i,j}$ for the ladders is
\[
\begin{tabular}{ccccccccccccccc}
    & & & & & & & 1 & & & & & & &  \\ 
    & & & & & & 1 & & 1 & & & & & & \\ 
    & & & & & 1 & & 1 & & 1 & & & & & \\
    & & & & 1 & & 1 & & 1 & & 1 & & & & \\ 
    & & & 1&  & 1&  & 1&  & 1&  &1 & & & \\ 
    & & & & & & & \vdots&
\end{tabular}
\]
We will look at this example more in depth in Section~\ref{subsect::strong_0th_order_seq}.  Physically ladders correspond to \textbf{\emph{rainbow approximations}} in quantum field theory.  For example the graphs in Figure~\ref{fig rainbow} give a rainbow approximation to the fermion propagator in Yukawa theory.  These are well known to be a $\beta(x)=0$ case.
\begin{figure}
    \centering
    \includegraphics{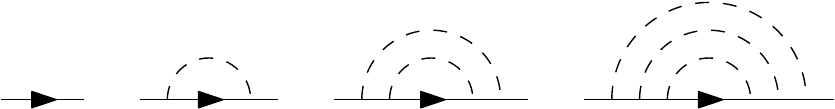}
    \caption{Some Feynman graphs contributing to a rainbow approximation.}
    \label{fig rainbow}
\end{figure}
\end{example}
 
\begin{example}\label{eg corolla 1}
 Another type of approximation that appears in quantum field theory are \textbf{\emph{chain approximations}} or \textbf{\emph{renormalon chains}}.  For example the graphs in Figure~\ref{fig chain} give a chain approximation to the 1PI\footnote{1PI stands for one particle irreducible and is the quantum field theory term for what a graph theorist would call \textbf{\emph{bridgeless}}.} fermion propagator in Yukawa theory. On the tree side, the insertion structure gives rooted trees where all the children of the root are leaves.  These rooted trees are called \textbf{\emph{corollas}}.  
 \begin{align*}
     c_1 & = \tun \\
     c_2 & = \tdeux \\
     c_3 & = \ttroisun \\
     c_4 & = \tquatreun \\
     c_5 & = \tcinqun
 \end{align*}
 The algebra $\K[c_1, c_2, \ldots]$ is subHopf because $\Delta(c_n) = c_n\otimes 1 + 1\otimes c_n + \sum_{i = 1}^{n-1} \binom{n-1}{i} c_1^i\otimes c_{n-i}$.
 The triangle of coefficients $\lambda_{i,j}$ for the corollas is 
 \[
\begin{tabular}{ccccccccccccccc}
    & & & & & & & 1 & & & & & & &  \\ 
    & & & & & & 2 & & 0 & & & & & & \\ 
    & & & & & 3 & & 0 & & 0 & & & & & \\
    & & & & 4 & & 0 & & 0 & & 0 & & & & \\ 
    & & & 5&  & 0&  & 0&  & 0&  &0 & & & \\ 
    & & & & & & & \vdots&
\end{tabular}
\]
The leftmost diagonal sequence here is first order, so we expect a renormalization group equation.  However, this is a case where the initial conditions do not match up with the general recurrence and so we obtain a non-zero $\gamma_0(x)$.  The appearance of a non-zero $\gamma_0(x)$ indicates that chain approximations are not physically as well behaved as the ladders, see Section 7 of \cite{Bms}.

Another interesting observation about corollas is that because of all the zeros in the triangle of coefficients, we can scale the generators to get leftmost diagonal sequences of other orders without causing trouble with other coefficients.  We will return to this idea in subsequent sections.

\begin{figure}
    \centering
    \includegraphics{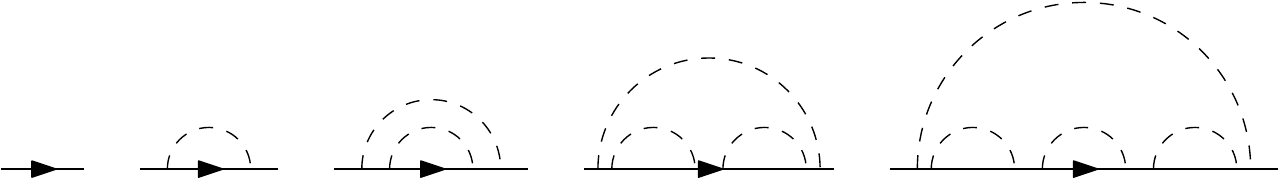}
    \caption{Some Feynman graphs contributing to a chain approximation.}
    \label{fig chain}
\end{figure}
 
\end{example}

\section{Higher-order renormalization group equations}
\label{sect::higher_order_RGEs}

Unfortunately, not every sequence of nonzero linear combinations of trees generating a Hopf subalgebra of $H_{CK}$ satisfies a renormalization group equation. This is the essence of the following lemma:

\begin{lemma}
\label{lem::connes_mosc_doesnt_satisfy_RGE}
Let $s = (t_n)_{n\geq 1}$ be the standard sequence of generators for the Connes-Moscovici Hopf subalgebra of $H_{CK}$ (see Example \ref{ex::Connes_Moscovici}), and let $X_s$ be the corresponding series: $\displaystyle X_{s} = 1 + \sum_{n = 1}^{\infty}t_nx^n$. Then the Green function $G(x,L) = \phi(X)$ does not satisfy a renormalization group equation for any (nonzero) choice of Feynman rules $\phi$.
\end{lemma}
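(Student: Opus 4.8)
The plan is to use the dictionary set up just before Proposition~\ref{prop and def of lambda}. For Feynman rules with $\sigma(\bullet)\neq 0$ the polynomials $Q_i(L)$ have degree $i$ and are linearly independent, so by \eqref{eq::c_nk_relationship} the Green function satisfies a renormalization group equation precisely when every leftward diagonal of the array in Figure~\ref{fig::array_of_coefficients} is arithmetic, i.e.\ when each sequence $n\mapsto \lambda_{n,k}\,\sigma(t_k)$ is linear in $n$. I will show that for the Connes--Moscovici sequence the leftmost diagonal fails this: the coefficient $\lambda_{n,1}$ of $t_1\otimes t_n=\bullet\otimes t_n$ in $\cop(t_{n+1})$ grows quadratically in $n$, so linearity can hold only if $\sigma(\bullet)=0$.

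To identify $\lambda_{n,1}$ I would introduce the leaf-removal operator $\gamma:=(\epsilon_\bullet\otimes\mathrm{id})\circ\widetilde{\cop}$, where $\widetilde{\cop}$ is the reduced coproduct and $\epsilon_\bullet$ extracts the coefficient of $\bullet$; concretely $\gamma(t)=\sum_v (t\setminus v)$, the sum over non-root leaves $v$ of $t$ of the tree with $v$ deleted. By construction $\lambda_{n,1}$ is the coefficient of the generator $t_n$ in $\gamma(t_{n+1})$. The crux is the commutation identity
\[
\gamma\circ N=\deg\cdot\mathrm{id}+N\circ\gamma ,
\]
valid on homogeneous elements, where $N$ is the growth operator of Example~\ref{ex::Connes_Moscovici} and $\deg$ multiplies a weight-$d$ element by $d$. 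This is proved by direct bookkeeping: in $\gamma N_s(t)$ either the freshly grafted leaf is removed, returning $t$ (and summing over the $|t|$ grafting sites $s$ gives $|t|\,t$), or some other leaf $w\neq s$ is removed, an operation that commutes with the grafting and reassembles, after summing over $s$ and $w$, into $N\gamma(t)$.

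Granting the identity, since $t_{n+1}=N(t_n)$ and $\gamma(t_1)=0$, a one-line induction finishes the computation: if $\gamma(t_n)=a_{n-1}t_{n-1}$ then $N\gamma(t_n)=a_{n-1}N(t_{n-1})=a_{n-1}t_n$, whence $\gamma(t_{n+1})=(n+a_{n-1})t_n$ and $a_n=n+a_{n-1}$ with $a_1=1$. Thus $\lambda_{n,1}=a_n=\binom{n+1}{2}=\tfrac{n(n+1)}{2}$, which is genuinely quadratic; already the first three values $1,3,6$ have non-constant successive differences and so are not arithmetic. (These three values can also be read off directly from $\cop(t_2),\cop(t_3),\cop(t_4)$ using the ladder and corolla coproducts, as a check.) Since $\sigma(\bullet)\neq 0$, the leftmost diagonal $\lambda_{n,1}\sigma(\bullet)$ is not linear, so by the criterion $G(x,L)$ satisfies no renormalization group equation.

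The step needing the most care is the escape route flagged in the text before Proposition~\ref{prop and def of lambda}: a diagonal can also be made linear if the relevant $\sigma(t_k)$ vanishes, which for the leftmost diagonal means $\sigma(t_1)=\sigma(\bullet)=0$. This is exactly the degenerate regime where $Q_1=0$, the $Q_i$ lose their independence, and the clean ``diagonals arithmetic $\iff$ RGE'' dictionary no longer applies. I expect to dispose of it either by adopting the non-degeneracy convention $\phi(\bullet)\neq 0$ under which the whole Figure~\ref{fig::array_of_coefficients} analysis was designed (the ``suitably generic'' and physically relevant case), or by a short separate argument on the low-order-in-$x$ coefficients of the putative equation; in the fully degenerate subcase where $\sigma$ vanishes on every generator the Green function collapses to $1$ and carries no information. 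The mathematical heart, in any case, is the quadratic growth of $\lambda_{n,1}$, which expresses precisely that the Connes--Moscovici sequence is intrinsically of second order and hence cannot satisfy a first-order renormalization group equation.
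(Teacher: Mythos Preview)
Your argument is correct and reaches the same conclusion as the paper, but by a genuinely different route. The paper proves the stronger statement $\lambda_{i,j}=\binom{i+j}{j+1}$ for \emph{all} $j$ by interpreting the Connes--Moscovici generators through increasingly labelled trees: grafting a labelled tree of size $j$ at a vertex with label $a$ contributes $\binom{i+j-a}{j}$ shuffles of the labels, and summing over $a$ collapses via the hockey-stick identity. You instead isolate only the leftmost diagonal $\lambda_{n,1}$ and compute it recursively from the operator identity $\gamma\circ N=\deg\cdot\mathrm{id}+N\circ\gamma$, obtaining $\lambda_{n,1}=\binom{n+1}{2}$. This is enough because that diagonal is weighted by $\sigma(\bullet)$, and $\sigma(\bullet)\neq 0$ is precisely the hypothesis under which the $Q_i$ are linearly independent and the ``arithmetic diagonals $\Leftrightarrow$ RGE'' dictionary holds; you correctly note that the $\sigma(\bullet)=0$ case is degenerate and outside the intended scope (the paper handles this no more formally than you do). Your approach is more economical for the lemma itself and the commutation identity is a clean piece of bookkeeping; the paper's approach buys the full formula for $\lambda_{i,j}$, which it reuses later (Example~\ref{eg::connes_moscovici_example} and the verification that the Connes--Moscovici array lies in $\Lambda$).
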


We remark that the Feynman rules $\phi(t_n) = 0$ for all $n \geq 1$ always lead to a Green function which will satisfy a renormalization group equation. As these rules are not very useful, however, we will not consider them in any future deliberations (unless otherwise mentioned).

The key to the proof of Lemma~\ref{lem::connes_mosc_doesnt_satisfy_RGE} is an explicit formula for the $\lam_{i,j}$ for the Connes-Moscovici Hopf algebra.  Before we prove the formula (and from there the lemma) it is useful to gather some intuition by working out initial examples.  Let us compute the left-most diagonal of the array of $\lam_{i,j}$ for the Connes-Moscovici Hopf algebra---these will be the coefficients of $\tun \tensor t_{n-1}$ in $\cop(t_n)$. We get that:
\begin{align*}
    \cop(t_2) &= \ldots +\tun \tensor \tun + \ldots\\
    \cop(t_3) &= \ldots +3\tun \tensor \tdeux + \ldots \\
    \cop(t_4) &= \ldots +6\tun \tensor \bigg(\ttroisdeux + \ttroisun \bigg) + \ldots \\
    \cop(t_5) &= \ldots + 10\tun \tensor \bigg( \tquatrecinq + \tquatrequatre + 3\tquatredeux + \tquatreun\bigg)\\
    &\hspace{2cm} \vdots
\end{align*}
and a pattern has emerged. Indeed, it will not be hard to show that the coefficients we seek form the sequence $(\lam_{i,1})_{i \geq 1} = \left(\binom{i+1}{2}\right)_{i \geq 1}$, hence as a polynomial in $i$ we have $\lam_{i,1} = \frac{1}{2}i^2 + \frac{1}{2}i$. Note that this is a polynomial in $i$ of order $2$. More generally, we will see that $\lam_{i,j} = \binom{i+j}{j+1}$.

Now we proceed to the proof of Lemma~\ref{lem::connes_mosc_doesnt_satisfy_RGE}.
\begin{proof}
In light of the discussion preceding the statement of the Lemma, we only need to check that---for fixed and arbitrary $j$---$\lam_{i,j} \neq ai + b$ for any $a,b \in \K$.

To do this we prove that $\lam_{i,j} = \binom{i+j}{j+1}$ by direct counting (see also Example~\ref{eg::connes_moscovici_example}).  Labelling each vertex of a rooted tree by the step in which it is added by the natural growth operator in the construction of the Connes-Moscovici Hopf algebra, we see that we obtain trees with increasing labellings, that is labellings with consecutive integers starting at 1 where every vertex has a label larger than its parent.  Furthermore, all increasing labellings are obtained exactly once.  (This justifies one of the standard alternate formulations of the Connes-Moscovici Hopf algebra is as the Hopf algebra of increasing labellings.)  From this perspective calculating $\lam_{i,j}$ is calculating the number of ways an increasingly labelled tree $t_1$ of size $j$ can be grafted onto an increasingly labelled tree $t_2$ of size $i$ where the labels are shuffled together while maintaining the increasing labelling property; that is, after the grafting we obtain an increasingly labelled tree where the labelling restricted to either $t_1$ or $t_2$ and normalized gives the initial labelling of that tree. 

The number of ways to do this depends on the label of the vertex onto which we graft.  Suppose we graft onto vertex $v$ of $t_2$ which has label $a$.  Then to obtain a labelling of the grafted tree we have the choice of any shuffle of the labels of $t_1$ with the labels larger than $a$ in $t_2$.  There are $\binom{j+i-a}{j}$ ways to do this.  Now summing over the vertex onto which we graft we get $\sum_{a=1}^{i} \binom{j+i-a}{j}$ which equals $\binom{i+j}{j+1}$ by the hockey-stick identity for binomial coefficients.

Therefore, we have that (for $j$ fixed arbitrarily) $(\lam_{i,j})_{i \geq 1} = \left(\binom{i+j}{j+1}\right)_{i \geq 1}$, which for all $j$ forms a sequence of $i$ of order at least $2$. Hence there is no choice of (nonzero) Feynman rules $\phi$ that will make the array of $c_{n,i}$'s have linear left diagonals, and consequently $G(x,L) = \phi(X)$ will not satisfy a renormalization group equation for any choice of Feynman rules $\phi$.
\end{proof}

Considering the Connes-Moscovici example further, note that by choosing Feynman rules with $\sigma(t)=0$ for $t\neq \tun$ we are left with only the leftmost diagonal in the array of $\lambda_{i,j}$, that is with the $\lambda_{i,1}$ which give a quadratic sequence.  This naturally leads us to think about analogues of the renormalization group equation which give quadratic or higher order sequences instead of only linear sequences.

\begin{defi}[Generalized Renormalization Group Equations]
\label{def::generalized_renormalization_group_equations}
For a Green function $G(x,L)$, define a {\bf generalized renormalization group equation} by:
\begin{equation}
    \label{eq::generalized_RGE}
    \frac{\partial G}{\partial L} = \overline{\beta}(x, \frac{\partial}{\partial x})G + \gamma_0(x)
\end{equation}
where $\overline{\beta}$ is polynomial in its second argument and $\gamma_0(x)$ is a series in $x$.  If the polynomial $\overline{\beta}$ is of degree $n$ in $\frac{\partial}{\partial x}$, we say that the generalized renormalization group equation is {\bf of order $n$}.  If $\gamma_0(x)=0$ we say that the generalized renormalization group equation is \textbf{homogeneous}.
\end{defi}
We remark that the standard definition of the renormalization group equation can be seen as a generalized renormalization group equation of order $1$ and homogeneous. Indeed, we simply take $\overline{\beta}(x, \frac{\partial}{\partial x}) = \gamma(x) + \beta(x)\frac{\partial}{\partial x}$. We also note that this definition is informed by quantum field theory, as the case of $\beta \equiv 0$ is already a known special case in physics as discussed above. In the language we are introducing with Definition \ref{def::generalized_renormalization_group_equations} these are $0$th-order homogeneous renormalization group equations. The homogeneous case is the most interesting from a physics perspective, but the example of the corollas (Example~\ref{eg corolla 1}) shows that nonhomogeneous examples do also appear in quantum field theory.

We can ask ourselves the same question about equation (\ref{eq::generalized_RGE}) that we did in Section \ref{subsect::RGE_and_beta_function} regarding the usual renormalization group equation: namely, what does it mean for a Green function to satisfy equation (\ref{eq::generalized_RGE}) after translating to combinatorics?  Let us approach the question using the same method as before; we will turn equation (\ref{eq::generalized_RGE}) into a statement about formal power series, and then compare coefficients across the equals sign.

To fix notation again, let $\phi$ be our Feynman rules, $(t_n)_{n \geq 1}$ be a sequence of linear combinations of trees generating a Hopf subalgebra of $H_{CK}$, and let $X = 1 + \sum_{n = 1}^{\infty} t_nx^n$ be the corresponding series. Then $G(x,L) = \phi(X)$ as before. Now $\overline{\beta}$ is allowed to be more general than $\beta$ was before, so we will set up the notation in the following way:
\begin{equation}
    \overline{\beta}(x, \frac{\partial}{\partial x}) := \sum_{j = 0}^{m}\beta^{(j)}(x)\frac{\partial^j}{\partial x^j}
\end{equation}
with each $\beta^{(i)}(x)$ a formal power series in $x$:
\begin{equation}
    \label{eq::def_of_beta_j_power_series}
    \beta^{(j)}(x) := \sum_{k = 1}^{\infty}\beta_{k}^{(j)}x^{k+1}
\end{equation}
with $\beta_{k}^{(j)} \in \K$. 

With this setup, the usual renormalization group equation is recovered by setting $\beta^{(0)}(x) = \gamma(x)$, $\beta^{(1)}(x) = \beta(x)$, and $\beta^{(k)}(x) \equiv 0$ for $k \geq 2$.

\label{gRGE derivation}

Since our Green function $G(x,L)$ looks exactly as it did before, the left-hand side of (\ref{eq::generalized_RGE}) is also precisely the same as shown in \eqref{eq::differentiated_greens_function_2}. We write it again for convenience:
\[
    \frac{\partial }{\partial L}G(x,L) = \sum_{n = 1}^{\infty}\bigg( \sum_{i = 0}^{n}c_{n,i}Q_i(L))\bigg)x^n
\]
So all that remains is to find what the right-hand side of equation (\ref{eq::generalized_RGE}) looks like in terms of formal power series.  We compute:
\begin{align*}
    \overline{\beta}G &= \bigg(\sum_{j = 0}^{m}\beta^{(j)}(x)\frac{\partial^j}{\partial x^j} \bigg)\bigg( \sum_{n = 0}^{\infty}Q_n(L)x^n\bigg)\\
    &= \beta^{(0)}\bigg( \sum_{n = 0}^{\infty}Q_n(L)x^n\bigg) + \beta^{(1)}\frac{\partial}{\partial x}\bigg( \sum_{n = 0}^{\infty}Q_n(L)x^n\bigg) + \ldots + \beta^{(m)}\frac{\partial^m}{\partial x^m}\bigg( \sum_{n = 0}^{\infty}Q_n(L)x^n\bigg) \\ 
    &= \beta^{(0)}\bigg( \sum_{n = 0}^{\infty}Q_n(L)x^n\bigg) + \beta^{(1)}\bigg( \sum_{n = 0}^{\infty}nQ_n(L)x^{n-1}\bigg) \\
    &+ \ldots + \beta^{(m)}\bigg( \sum_{n = 0}^{\infty}n(n-1)\cdots(n - m + 1)Q_n(L)x^{n-m}\bigg) \\ 
    &= \sum_{j = 0}^{m}\sum_{n = 0}^{\infty}\beta^{(j)}n(n-1)\cdots(n - j + 1)Q_n(L)x^{n-j} \\
    &= \sum_{n = 0}^{\infty}\sum_{j = 0}^{m}\beta^{(j)}n(n-1)\cdots(n - j + 1)Q_n(L)x^{n-j} \\
    &= \sum_{n = 0}^{\infty}\sum_{j = 0}^{\text{min}(m, n-1)}\beta^{(j)}\frac{n!}{(n-j)!}Q_n(L)x^{n-j}
\end{align*}
But now the $\beta^{(j)}$ are just power series, with coefficients as defined in (\ref{eq::def_of_beta_j_power_series}), so we may substitute these in. When we do this we get:
\begin{align*}
    \overline{\beta}G
    &= \sum_{n = 0}^{\infty}\sum_{j = 0}^{\text{min}(m, n-1)} \sum_{k = 1}^{\infty}\beta_{k}^{(j)}\frac{n!}{(n-j)!}Q_n(L)x^{n-j + k +1} 
\end{align*}
Now we want the sum to be indexed by powers of $x$, so we make a substitution on the indices: $t = n - j + k + 1$. This yields:
\begin{align*}
    \overline{\beta}G &= \sum_{t = 2}^{\infty}\sum_{j = 0}^{\text{min}(m, n-1)} \sum_{n - j + k +1 = t}\beta_{k}^{(j)}\frac{n!}{(n-j)!}Q_n(L)x^{t}
\end{align*}
Finally we can simplify the indices of the third summation as well. If $n - j + k = t$, and $j$ is already fixed by the second summation, it follows that $n$ and $k$ are partitioning $t + j + 1$ and hence we can rewrite this as:
\begin{align*}
    \overline{\beta}G &= \sum_{t = 1}^{\infty}\sum_{j = 0}^{m} \sum_{n = 0}^{t + j + 1}\beta_{t + j - n - 1}^{(j)}\frac{n!}{(n-j)!}Q_n(L)x^{t} 
\end{align*}

Now up to this point we have only used the assumption that the underlying sequence in question generated a Hopf subalgebra. As in the case of the usual renormalization group equation, we can then add in the assumption that $G(x,L)$ satisfies the generalized renormalization group equation \eqref{eq::generalized_RGE} to create relations among coefficients of the respective power series. Doing this, we obtain a form of the generalized renormalization group equation expressed solely in terms of formal power series:
\begin{equation}
\label{eq::generalized_RGE_expanded}
    \sum_{n = 0}^{\infty}\bigg( \sum_{i = 0}^{n}c_{n,i}Q_i(L))\bigg)x^n  = \sum_{n = 0}^{\infty}\sum_{j = 0}^{m} \sum_{i = 0}^{n + j - 1}\beta_{t + j - i - 1}^{(j)}\frac{i!}{(i-j)!}Q_i(L)x^{n} 
\end{equation}
(Note that we have changed the letters of some of the indices to avoid confusion). This is the generalized version of equation \eqref{eq extract coeffs in x}. By comparing the coefficients of powers of $x$ on each side, we conclude that the following identity holds for all $n \geq 1$:
\begin{equation}
\label{eq::Q_i_generalized_RGE}
    \sum_{i = 0}^{n}c_{n,i}Q_i(L) = \sum_{j = 0}^{m} \sum_{i = 0}^{n + j - 1}\beta_{n + j - i - 1}^{(j)}\frac{i!}{(i-j)!}Q_i(L)
\end{equation}
and using as before that the $Q_i(L)$ are linearly independent, we can compare their coefficients across the equals sign as well to get that:
\begin{align}
\label{eq::c_n_i_generalized_RGE}
    c_{n,i} &= \sum_{j = 0}^{m}\beta_{n + j - i}^{(j)}\frac{i!}{(i-j)!}\\
    &= \beta_{n - i}^{(0)} + i\beta_{n - i + 1}^{(1)} + \ldots + i(i-1)\cdots(i- m + 1)\beta_{n - i + m}^{(m)}
\end{align}
In other words, when $n$ is fixed we have that $c_{n,i}$ is a polynomial in $i$ of degree $m$. Hence we have found a necessary condition for which $G(x,L)$ satisfy an $m$th-order generalized renormalization group equation, as we desired. 

As before, it remains to consider the converse. Suppose we start with $G(x,L)$ such that the corresponding sequence of $t_i$ generate a Hopf subalgebra with the $c_{n,i}$ polynomial in $i$ of degree $m$. Call these polynomials $f_j(i)$.  For $n \geq 1$, we can recover the coefficients $\beta_n^{(j)}$ by evaluating the polynomial $f_j$ at $i = 0, \ldots , m$ and then changing basis for the vector space of polynomials as in Section 4.3 of \cite{StanleyVolume1}. Hence as in the case of the usual renormalization group equation, equation $\eqref{eq::c_n_i_generalized_RGE}$ is satisfied except possibly in degree $i = 0$. We remark that this correspondence recovering $\overline{\beta}$ is possible exactly because each $\beta_i^{(j)}$ appears as the coefficient in a unique leftward diagonal of the triangle $(c_{i,j})_{i,j \geq 1}$. For generic $m \geq 3$ the triangle begins:

\setlength{\tabcolsep}{-5pt}
\begin{center}
\begin{tabular}{ccccccccccccccc}
 & & & & & & & $\beta_1^{(0)}$ & & & & & & &  \\ 
 & & & & & & $\beta_1^{(0)} + \beta_1^{(1)}$ & & $\hspace{1cm}\beta_2^{(0)}$ & & & & & & \\  
 & & & & & $\beta_1^{(0)} + 2\beta_1^{(1)} + 2\beta_1^{(2)}$ & & $\beta_2^{(0)} + \beta_2^{(1)}$ & & $\hspace{1cm}\beta_3^{(0)}$ & & & & & \\
 & & & & $\beta_1^{(0)} + 3\beta_1^{(1)} + 6\beta_1^{(2)} + 6\beta_1^{(3)}$ & & $\beta_2^{(0)} + 2\beta_2^{(1)} + 2\beta_2^{(2)}$&  &$\hspace{1cm}\beta_3^{(0)} + \beta_3^{(01)}$ &  &$\hspace{2cm}\beta_4^{(0)}$ & & & & \\
  & &  & &  & &  & $\vdots$ &  & &  & &  & & 
\end{tabular}
\end{center}
Now by thinking of each $Q_i(L)$ as a formal variable and inserting the $c_{n,i}$'s we can recover equation \eqref{eq::Q_i_generalized_RGE} except for possible disagreement in the $i = 0$ terms. However when $i = 0$ in equation \eqref{eq::Q_i_generalized_RGE}, the only part of $\overline{\beta}$ that survives is $\beta^{(0)}(x)$. This means that the power series expression of the generalized renormalization group equation \eqref{eq::generalized_RGE_expanded} holds up to a power series in $x$. This is the origin of the inhomogeneity $\gamma_0(x)$ as before. In conclusion, assuming only that the sequence of $t_i$ corresponding to $G(x,L)$ generate a Hopf subalgebra with the $c_{n,i}$ polynomial in $i$ of degree $m$, we have that 
\begin{equation}
    \label{eq::generalized_RGE_inhomogeneous}
    \left( \frac{\partial }{\partial L} - \overline{\beta}(x, \frac{\partial}{\partial x}) \right)G= \gamma_0(x)
\end{equation}
We can summarize the preceding discussion with the following lemma:

\begin{lemma}\label{lem summary equiv}
A Green function $G(x,L)$ satisfies an $m$th order non-homogeneous generalized renormalization group equation if and only if for each $n$, $c_{n,i}$ is a polynomial of degree $m$ in $i$.
\end{lemma}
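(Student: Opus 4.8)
The statement is the biconditional that records, as a single lemma, the forward and converse computations developed in the preceding pages, so the plan is to assemble those two halves while making explicit the one hypothesis that powers both of them: that the Feynman rules are nonzero, i.e.\ $\sigma(\bullet)\neq 0$. Under this hypothesis Lemma~\ref{lem::explicit_feynman_rules} guarantees that each $Q_i(L)=\phi(t_i)$ has degree exactly $i$, so that the family $\{Q_i(L)\}_{i\geq 0}$ is linearly independent; this linear independence is exactly what lets us pass from an identity of power series to an identity among the scalar coefficients $c_{n,i}$.

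For the forward direction I would assume that $G(x,L)$ satisfies the order-$m$ generalized renormalization group equation \eqref{eq::generalized_RGE}. Expanding the left-hand side as in \eqref{eq::differentiated_greens_function_2} and the right-hand side through the computation ending in \eqref{eq::generalized_RGE_expanded}, comparing coefficients of $x^n$ gives \eqref{eq::Q_i_generalized_RGE}, and then comparing coefficients of each $Q_i(L)$ --- legitimate by the linear independence above --- gives the closed formula \eqref{eq::c_n_i_generalized_RGE}. Reading \eqref{eq::c_n_i_generalized_RGE} along a leftward diagonal of the triangle, that is holding $n-i$ fixed and letting $i$ vary, makes every coefficient $\beta^{(j)}_{n+j-i}=\beta^{(j)}_{(n-i)+j}$ constant, so the diagonal sequence equals $\sum_{j=0}^{m}\beta^{(j)}_{(n-i)+j}\frac{i!}{(i-j)!}$; since $\frac{i!}{(i-j)!}$ is a falling factorial of degree $j$ in $i$, this is a polynomial in $i$ of degree exactly $m$ precisely when the top operator $\beta^{(m)}$ is not identically zero, which is the meaning of ``order $m$''. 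This is the asserted polynomiality.

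The converse carries the real content, and the hard part will be pinning down the inhomogeneity. Starting from the assumption that each diagonal sequence is a polynomial of degree $m$ in $i$, I would recover the data of $\overline{\beta}$ by interpolation: the falling factorials $\frac{i!}{(i-j)!}$ for $0\leq j\leq m$ form a basis of the polynomials of degree at most $m$, so evaluating each diagonal polynomial at $i=0,\dots,m$ and changing basis, as in Section~4.3 of \cite{StanleyVolume1}, produces unique coefficients $\beta^{(j)}_k$ that make \eqref{eq::c_n_i_generalized_RGE} hold for every $i\geq 1$. Reassembling these through \eqref{eq::Q_i_generalized_RGE} and \eqref{eq::generalized_RGE_expanded} then reproduces the equation, and the only terms that the reconstruction cannot control are those with $i=0$, which were dropped when passing to the recurrence. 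The delicate point is that this $i=0$ discrepancy must be shown to be $L$-independent: when $i=0$ the only surviving part of $\overline{\beta}$ is $\beta^{(0)}(x)$ and $Q_0(L)$ is constant, so the mismatch collects into a pure power series $\gamma_0(x)$, yielding \eqref{eq::generalized_RGE_inhomogeneous}. Combining the two directions, and recalling that our conventions permit the nonhomogeneous term $\gamma_0(x)$, gives the equivalence.
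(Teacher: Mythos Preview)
Your proposal is correct and takes essentially the same approach as the paper: the lemma is explicitly presented there as a summary of the preceding discussion, and you have faithfully reassembled its two halves---the forward expansion leading to \eqref{eq::c_n_i_generalized_RGE}, and the converse by interpolation in the falling-factorial basis with the $i=0$ residual absorbed into $\gamma_0(x)$. Your added remark that the linear independence of the $Q_i(L)$ rests on $\sigma(\bullet)\neq 0$, and your reading of ``for each $n$'' as indexing the leftward diagonals (fixing $n-i$), match the paper's own derivation and intent.
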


\section{$\Lambda$-arrays, strong and weak sequences}\label{sec lambda}

As discussed in the introduction, in order to pin down the scaling freedom we will be thinking not of the Hopf subalgebras directly, but in terms of a sequence of generators.  In the previous section we saw that the array of coefficients $\lambda_{i,j}$ is key to reading off the order of renormalization group equation that the associated Green function satisfies.  Now it is time to make these two notions of sequence of generators and doubly-indexed sequences $(\lambda_{i,j})_{i,j \geq 1}$ precise and show that---suitably defined---they are equivalent.  With that equivalence in place we will be ready to define our combinatorial notions of strong and weak $k$th order sequences.

\subsection{Equivalence between sequences and $\Lambda$-arrays}

\begin{defi}
We shall consider sequences 
$(t_n)_{n\geq 1}$ of elements of $H_{CK}$ such that:
\begin{enumerate}
\item For all $n\geq 1$, $t_n$ is a nonzero linear combination of trees of degree $n$.
\item $t_1=\bullet$.
\item The subalgebra $A$ generated by the elements $t_n$, $n\geq 1$, is a Hopf subalgebra of $H_{CK}$.
\end{enumerate}
The set of such sequences is denoted by $\seq$.
\end{defi}

\begin{remark}
We could also consider the set $\seq'$ of sequences $(t_n)_{n\geq 1}$ satisfying points 1 and 3 only.
If $(t_n)_{n\geq 1} \in \seq'$, there exists a nonzero scalar $\alpha$ such that $t_1=\alpha \tun$.
Hence, there exists a bijection:
\[\left\{\begin{array}{rcl}
\K\setminus\{0\}\times \seq&\longrightarrow&\seq'\\
(\alpha,(t_n)_{n\geq 1})&\longrightarrow&(\alpha t_1,t_2,t_3,\ldots).
\end{array}\right.\]
Consequently, it is enough to consider $\seq$.
\end{remark}

It turns out we can fully characterize exactly which $(\lambda_{i,j})_{i,j \geq 1}$ can appear from elements of $\seq$. This is the content of the following:
\begin{theo} \label{theo1}
We denote by $\Lambda$ the set of double sequences $(\lambda_{i,j})_{i,j\geq 1}$ such that:
\begin{enumerate}
\item (Pre-Lie relation). For any $i,j,k\geq 1$:
\begin{equation}
\label{eq::prelie_relation}
 \lambda_{i,j}\lambda_{i+j,k}-\lambda_{j,k}\lambda_{i,j+k}
=\lambda_{i,k}\lambda_{i+k,j}-\lambda_{k,j}\lambda_{i,j+k}.   
\end{equation}

This relation is denoted by $PL(i,j,k)$.
\item (Non degeneracy). For any $n\geq 2$, there exist $i,j \geq 1$ such that $i+j=n$ and $\lambda_{i,j}\neq 0$.
\end{enumerate}
Then there is a bijection between $\seq$ and $\Lambda$ given by the definition of $\lambda_{i,j}$ in Proposition~\ref{prop and def of lambda}.
\end{theo}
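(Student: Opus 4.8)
The plan is to establish the two halves of the claimed bijection separately: first that the assignment $(t_n)_{n\geq1}\mapsto(\lambda_{i,j})_{i,j\geq1}$ of Proposition~\ref{prop and def of lambda} genuinely lands in $\Lambda$ (verifying $PL(i,j,k)$ and non-degeneracy), and then that it admits an inverse. The conceptual engine throughout is the duality between $H_{CK}$ and the free pre-Lie algebra on one generator: the graded dual $H_{CK}^\circ$ is the Grossman--Larson Hopf algebra, whose space of primitives is the free pre-Lie algebra on one generator under the grafting product $\triangleright$. For a sequence $(t_n)\in\seq$ generating the Hopf subalgebra $A$, I would work with $\mathfrak{g}:=\mathrm{Prim}(A^\circ)$, which is graded with one-dimensional pieces $\K Z_n$ (since $A$ has one generator per degree), where $Z_n$ is the linear functional vanishing on decomposables and with $\langle Z_n,t_n\rangle=1$.

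For well-definedness, the key observation is that the coefficient of $t_j\tensor t_i$ in $\cop(t_n)$ equals $\langle Z_jZ_i,t_n\rangle$, so projecting the associative product $Z_jZ_i$ of $A^\circ$ onto $\mathfrak{g}$ produces a bilinear product on $\bigoplus_n\K Z_n$ whose structure constants are exactly the $\lambda_{i,j}$. Because this product is the one inherited from grafting on $\mathrm{Prim}(H_{CK}^\circ)$ through the surjection $H_{CK}^\circ\twoheadrightarrow A^\circ$ dual to $A\hookrightarrow H_{CK}$, it is pre-Lie; and a direct check shows that $PL(i,j,k)$ is precisely the right-symmetric (pre-Lie) identity for the product $e_i\cdot e_j:=\lambda_{i,j}\,e_{i+j}$, so the relation holds automatically. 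Alternatively one can extract $PL$ from co-associativity of $\tilde{\cop}$ by comparing the coefficients of $t_k\tensor t_j\tensor t_i$, though this requires bookkeeping (see the obstacle below). For non-degeneracy I would show that no nonzero combination of trees of degree $n\geq2$ is primitive: cutting all edges at the root of a tree $t$ contributes the term $B_-(t)\tensor\bullet$ to $\tilde{\cop}(t)$ with coefficient $1$, where $B_-(t)$ is the forest of root-branches, and $B_-(t)$ determines $t$; hence these terms are linearly independent across trees and $\tilde{\cop}(t_n)\neq0$. That the \emph{primitive} part (the $\lambda$'s) is already nonzero then follows because every tree of size $\geq2$ is a nontrivial grafting, so the (single tree)$\tensor$(single tree) component of $\tilde{\cop}(t_n)$ does not vanish.

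For the inverse direction, given $(\lambda_{i,j})\in\Lambda$ I would form the graded pre-Lie algebra $\mathfrak{g}_\lambda=\bigoplus_{n\geq1}\K e_n$ with $e_i\cdot e_j=\lambda_{i,j}e_{i+j}$, which is genuinely pre-Lie by $PL$ and is generated by $e_1$ (by non-degeneracy and induction on $n$). The universal property of the free pre-Lie algebra on one generator yields a surjective pre-Lie morphism onto $\mathfrak{g}_\lambda$; applying the enveloping-algebra (Oudom--Guin) functor together with the identification of the free case as $H_{CK}^\circ$, and then dualizing, produces a Hopf subalgebra $A_\lambda\subseteq H_{CK}$ with one generator per degree. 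I would then fix generators $t_n$ by duality to the $e_n$, normalize $t_1=\bullet$, and compute that their $\lambda$-array is the given one, exhibiting the inverse. Injectivity would come from the rigidity proved above: if two sequences share the same $\lambda$-array, an induction on $n$ shows their reduced coproducts agree, since the decomposable part of $\tilde{\cop}(t_n)$ is forced by co-associativity together with the (equal) lower generators while the single$\tensor$single part is the (equal) $\lambda$'s; their difference is then a primitive combination of trees, hence $0$.

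The hard part, and the step I would treat most carefully, is controlling the decomposable (forest) contributions. Co-associativity of $\tilde{\cop}$ does \emph{not} restrict to a clean relation on the linear coefficients, because a decomposable term such as $t_at_b\tensor t_c$ feeds $t_a\tensor t_b+t_b\tensor t_a$ into the single$\tensor$single part upon reapplying $\cop$; the assertion that exactly the pre-Lie identity survives is the real content of the $H_{CK}$/free-pre-Lie duality rather than a formal consequence of coassociativity. Dually, in the existence direction I must guarantee that the abstract generators produced by the enveloping-algebra construction are representable by honest linear combinations of \emph{trees} (not forests) with $t_1=\bullet$; this amounts to subtracting suitable products of lower generators and checking that the resulting tree-part still carries the prescribed $\lambda_{i,j}$. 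Pinning down these two decomposable-control statements is where the essential work lies.
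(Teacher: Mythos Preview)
Your proposal is correct and follows essentially the same approach as the paper: both rely on the duality identifying $H_{CK}^\circ$ with the Grossman--Larson Hopf algebra, whose primitives form the free pre-Lie algebra on $\tun$, so that the $\lambda_{i,j}$ appear as the structure constants of the induced pre-Lie product on the one-dimensional graded quotient and $PL(i,j,k)$ is the pre-Lie identity itself; surjectivity is obtained by applying the universal property of the free pre-Lie algebra and dualizing, exactly as you outline. The only notable difference is in injectivity, where the paper avoids your induction on reduced coproducts (and the attendant ``decomposable control'') by observing that the unique pre-Lie morphism $\g\to V$ sending $\tun\mapsto X_1$ must factor through both quotients $\g/\g\cap I$ and $\g/\g\cap I'$, forcing $I=I'$ directly; similarly, non-degeneracy is argued by noting that if all $\lambda_{i,j}$ with $i+j=n$ vanished then $\mathrm{span}(e_k:k\neq n)$ would be a proper pre-Lie subalgebra containing $e_1$, contradicting generation.
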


To prove the theorem it will be handy to have the following lemma.

\begin{lemma} \label{lemme2}
Let $(\lambda_{i,j})_{i,j\geq 1} \in \Lambda$. We define a pre-Lie structure on the space $V=Vect(X_i,i\geq 1)$ by:
\[X_i\bullet X_j=\lambda_{i,j} X_{i+j}.\]
It is graded, with $X_i$ homogeneous of degree $i$ for any $i$.
Moreover, $(V,\bullet)$ is generated by $X_1$.
\end{lemma}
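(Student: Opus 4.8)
The plan is to verify the three assertions separately, since each follows quickly from one of the two defining properties of $\Lambda$, together with bilinearity of the product.

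For the pre-Lie identity I would work on the basis $(X_i)$, which suffices because $\bullet$ is bilinear. Since $X_i\bullet X_j=\lambda_{i,j}X_{i+j}$ is always a scalar multiple of a single basis vector, iterating the product is immediate: $(X_i\bullet X_j)\bullet X_k=\lambda_{i,j}\lambda_{i+j,k}X_{i+j+k}$ while $X_i\bullet(X_j\bullet X_k)=\lambda_{j,k}\lambda_{i,j+k}X_{i+j+k}$. The left pre-Lie axiom demands that the associator $(X_i\bullet X_j)\bullet X_k-X_i\bullet(X_j\bullet X_k)$ be symmetric under exchanging the last two arguments. Writing down the analogous expression with $j$ and $k$ swapped and using $\lambda_{i,j+k}=\lambda_{i,k+j}$, the required equality of the two associators is exactly the relation $PL(i,j,k)$ of Theorem~\ref{theo1}. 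So this part is a direct translation of the first defining property of $\Lambda$, with no computation beyond reading off coefficients.

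For the grading I would simply observe that $X_i\bullet X_j\in\K X_{i+j}$, so the product of a homogeneous element of degree $i$ with one of degree $j$ is homogeneous of degree $i+j$; this is precisely the statement that $V=\bigoplus_{i\geq 1}\K X_i$ is graded with $\deg X_i=i$. The substantive claim is generation by $X_1$, which I would prove by strong induction on $n$, showing each $X_n$ lies in the pre-Lie subalgebra $W$ generated by $X_1$, i.e.\ the smallest subspace containing $X_1$ and closed under $\bullet$. The case $n=1$ is trivial. For $n=2$, non-degeneracy forces $\lambda_{1,1}\neq 0$, and $X_1\bullet X_1=\lambda_{1,1}X_2$ gives $X_2=\lambda_{1,1}^{-1}(X_1\bullet X_1)\in W$. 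For the inductive step, assume $X_1,\dots,X_{n-1}\in W$; by non-degeneracy there exist $i,j\geq 1$ with $i+j=n$ and $\lambda_{i,j}\neq 0$, and since $i,j\leq n-1$ both $X_i,X_j\in W$, whence $X_i\bullet X_j=\lambda_{i,j}X_n\in W$ and therefore $X_n=\lambda_{i,j}^{-1}(X_i\bullet X_j)\in W$. Thus $W=V$.

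The one point requiring care—and the step I would flag as the main (if mild) obstacle—is making precise what ``generated by $X_1$'' means in this non-associative setting and checking that non-degeneracy is exactly the hypothesis the induction needs: it guarantees, at every degree $n\geq 2$, at least one nonzero structure constant $\lambda_{i,j}$ with $i+j=n$ and both indices strictly below $n$, which is precisely what lets us reach $X_n$ from strictly lower generators. Notably, the generation argument uses only non-degeneracy and invertibility of the relevant nonzero scalars, and does not invoke the pre-Lie relation at all.
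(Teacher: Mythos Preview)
Your proof is correct and follows essentially the same approach as the paper: the pre-Lie identity on basis vectors reduces to $PL(i,j,k)$, and generation by $X_1$ is established by strong induction using non-degeneracy to find a nonzero $\lambda_{i,j}$ with $i+j=n$. One small terminological slip: the symmetry of the associator in the last two arguments is what the paper calls the \emph{right} pre-Lie identity, not the left one, but your computation itself is correct and matches the paper's convention.
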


\begin{proof}
The pre-Lie relation for $(X_i,X_j,X_k)$ is equivalent to point 1 of the theorem statement. Let us denote by $V'$ the pre-Lie subalgebra
of $(V,\bullet)$ generated by $X_1$ and let us prove inductively that $X_k \in V'$.
If $k=1$, this is obvious. Otherwise, there exist $i,j$, such that $i+j=k$ and $\lambda_{i,j}\neq 0$.
By the induction hypothesis, $X_i,X_j \in V'$, so $\displaystyle X_{i+j}=\frac{1}{\lambda_{i,j}} 
X_i\bullet X_j \in V'$.
\end{proof}

\begin{proof} (Theorem \ref{theo1}). 
We aim to define $\theta:\seq\longrightarrow\Lambda$.  Let the $\lambda_{i,j}$ for $\theta(T)$, $T\in\seq$ be as in Proposition~\ref{prop and def of lambda}.  Then we need to show that these $\lambda_{i,j}$ satisfy the conditions above and that $\theta$ is bijective. 

It will be convenient to work in the graded dual.  The graded dual of $H_{CK}$ is $H_{GL}$, the Grossman-Larson Hopf algebra of rooted trees \cite{Grossman,Panaite,Hoffman}.
The space of primitive elements of $H_{GL}$ has for basis the set of rooted trees.
It inherits a product $\bullet$ defined by:
\[t\bullet t'=\sum_{\mbox{\scriptsize $s$ vertex of $t$}}\mbox{grafting of $t'$ on $s$}.\]
This product is (right) pre-Lie, that is to say, for any $x,y,z\in Prim(H_{GL})$:
\[(x\bullet y)\bullet z-x\bullet(y\bullet z)=(x\bullet z)\bullet y-x\bullet(z\bullet y).\]
By \cite{Chapoton}, the pre-Lie algebra $\g=(Prim(H_{GL}),\bullet)$ is freely generated by the  single vertex tree $\tun$.
Note that for any trees $t$, $t'$, we have  $t*t'=tt'+t\bullet t'$ with $*$ as the multiplication in $H_{GL}$. Hence, for any $x,y\in \g$:
\begin{align}
\label{EQ1} x*x'&=xx'+x\bullet x'.
\end{align}

The pairing between $H_{CK}$ and $H_{GL}$ is given by the following: if $f$ and $f'$ are two rooted forests,
\[\langle f,f'\rangle=s_f \delta_{f,f'},\]
where $s_f$ is the number of symmetries (as a rooted graph) of $f$. 

Let $(t_n)_{n\geq 1}$ be an element of $\seq$. We denote by $A$ the subalgebra of $H_{CK}$ generated by the
elements $t_n$, $n\geq 1$. This is a graded Hopf subalgebra of $H_{CK}$, with a basis given
by monomials of $t_n$, as none of the $t_n$ is zero. Let $I=A^\perp\subseteq H_{GL}$.
As $A$ is a graded Hopf subalgebra of $H_{CK}$, $I$ is a graded biideal of $H_{GL}$
and the graded dual $A^*$ of $A$ is identified with $H_{GL}/I$. 
As $H_{GL}$ is connected and cocommutative, it is the enveloping algebra of $\g$ by the Cartier-Quillen-Milnor-Moore's
theorem. Consequently, $A^*$ is the enveloping algebra of $\g/\g\cap I$. 
Furthermore, $\g/\g\cap I$ is a pre-Lie ideal of $\g$ and inherits a pre-Lie product, also denoted by $\bullet$.\\

Let us consider the basis $(e_k)_{k\geq 1}$ of $\g/\g\cap I$ defined by:
\[\langle e_k,t_l\rangle=\delta_{k,l}.\]
In particular, as $t_1=\tun$, $e_1=\tun+I$.
For any $k\geq 1$, $e_k$ is homogeneous of degree $k$. As the pre-Lie product of $\g/\g\cap I$ is homogeneous,
for any $i,j\geq 1$, there exists a scalar $\lambda_{i,j}$ such that:
\[e_i\bullet e_j=\lambda_{i,j} e_{i+j},\]
and tracing through the dualities these $\lambda_{i,j}$ are exactly the $\lambda_{i,j}$ of the map $\theta$.
Writing the pre-Lie relation of $\g/\g\cap I$ for $(e_i,e_j,e_k)$ gives property 1 for $(\lambda_{i,j})_{i,j\geq 1}$.
Let us now prove property 2 for this double sequence. Let us assume that there exists $n\geq 2$
such that for any $i,j$ such that $i+j=n$, $\lambda_{i,j}=0$.
Let us put $\g'=Vect(e_k,k\neq n)$. For any $i,j\geq 1$:
\[e_i\bullet e_j =\lambda_{i,j}e_{i+j}\in \begin{cases}
(0)\mbox{ if }i+j=n,\\
\g' \mbox{ otherwise}.
\end{cases}\]
Hence, $\g'$ is a strict pre-Lie subalgebra of $\g/\g\cap I$ containing $e_1$.
As $\g$ is generated by the single vertex tree $\tun$, it follows that $\g/\g\cap I$ is generated by $e_1$: this is a contradiction.
So property 2 is satisfied. Therefore $\theta$ is a map from $\seq$ to $\Lambda$.

Let us prove that $\theta$ is injective. Let us assume that $\theta((t_n)_{n\geq 1})=\theta((t'_n)_{n\geq 1})
=(\lambda_{i,j})_{i,j\geq 1}$. Let $V$ be the pre-Lie algebra associated to $(\lambda_{i,j})_{i,j\geq 1}$ by Lemma \ref{lemme2}.
Then, with the preceding notations, we obtain graded  pre-Lie algebra isomorphisms:
\begin{align*}
\phi&:\left\{\begin{array}{rcl}
\g/\g \cap I&\longrightarrow&V\\
e_i&\longrightarrow&X_i,
\end{array}\right.&
\phi'&:\left\{\begin{array}{rcl}
\g/\g \cap I'&\longrightarrow&V\\
e'_i&\longrightarrow&X_i.
\end{array}\right.
\end{align*}
Extending this Lie algebra morphism, we obtain graded Hopf algebra isomorphisms:
\begin{align*}
\Phi&:\left\{\begin{array}{rcl}
H_{GL}/I&\longrightarrow&\mathcal{U}(V)\\
e_i&\longrightarrow&X_i,
\end{array}\right.&
\phi'&:\left\{\begin{array}{rcl}
H_{GL}/I'&\longrightarrow&\mathcal{U}(V)\\
e'_i&\longrightarrow&X_i.
\end{array}\right.
\end{align*}
Dually, we obtain Hopf algebra monomorphisms:
\begin{align*}
\Phi^*&:\left\{\begin{array}{rcl}
\mathcal{U}(V)^*&\longrightarrow&H_{CK}\\
X_i^*&\longrightarrow&t_i,
\end{array}\right.&
\Phi'^*&:\left\{\begin{array}{rcl}
\mathcal{U}(V)^*&\longrightarrow&H_{CK}\\
X_i&\longrightarrow&t'_i.
\end{array}\right.
\end{align*}
As $\g$ is freely generated by $\tun$, there exists a unique pre-Lie algebra morphism $\psi$ from $\g$ to $V$
sending $\tun$ to $X_1$. Obviously, $\psi=\phi \circ \pi=\phi'\circ \pi'$, where $\pi$ and $\pi'$
are the canonical surjections on $\g/\g\cap I$ and $\g/\g\cap I'$ respectively. Hence,
$I=I'$ and $\phi=\phi'$. Consequently, $\Phi=\Phi'$, $\Phi^*=\Phi'^*$ and finally $(t_n)_{n\geq 1}=(t'_n)_{n\geq 1}$.
So $\theta$ is injective.\\

Let us prove that $\theta$ is surjective. Let $(\lambda_{i,j})_{i,j\geq 1}\in \Lambda$ and let us consider the
pre-Lie algebra $V$ of Lemma \ref{lemme2}. As $V$ is generated by $X_1$, there exists a unique pre-Lie algebra
morphism $\phi:\g\longrightarrow V$, surjective, such that $\phi(\tun)=X_1$. 
This morphism is extended as a surjective Hopf algebra morphism $\Phi:H_{GL}\longrightarrow \mathcal{U}(V)$.
Dually, we obtain an injective Hopf algebra morphism $\Phi^*:\mathcal{U}(V)^*\longrightarrow H_{CK}$.
If we put $t_n=\Phi^*(X_n^*)$ for all $n\geq 1$, then $t_n$ is a nonzero linear combination of trees 
of degree $n$ and $\Phi^*(\mathcal{U}(V)^*)=\K[t_n,n\geq 1]$ is a Hopf subalgebra of $H_{CK}$.
As $\phi(\tun+I)=X_1$, we have $t_1=\tun$. By construction, $\theta((t_n)_{n\geq 1})=(\lambda_{i,j})_{i,j\geq 1}$. \end{proof}

It is also convenient to describe the inverse bijection explicitly. We shall use the Oudom-Guin construction of the enveloping
algebra of a pre-Lie algebra \cite{OudomGuin}.
Let $(\lambda_{i,j})_{i,j\geq 1}\in \Lambda$ and let $V$ be the pre-Lie algebra of Lemma \ref{lemme2}.
The pre-Lie product $\bullet$ is extended to the symmetric Hopf algebra $S(V)$ in the following way:
\begin{enumerate}
\item For any $x \in S(V)$, $x\bullet 1=x$.
\item For any $x,y,z\in S(V)$, with Sweedler's notation:
\[xy\bullet z=\left(x\bullet z^{(1)}\right)\left(y\bullet z^{(2)}\right).\]
\item For any $a,b_1,\ldots,b_k \in S(V)$, with $k\geq 2$:
\[a\bullet b_1\ldots b_k=(a\bullet b_1\ldots b_{k-1})\bullet b_k-\sum_{i=1}^{k-1}
a\bullet (b_1\ldots (b_i \bullet b_k)\ldots b_{k-1}).\]
\end{enumerate}
We shall consider the element $X_1\bullet X_{i_1}\ldots X_{i_k}$ of $V$.. By homogeneity, there exists
a scalar $\lambda(i_1,\ldots,i_k)$ such that:
\[X_1\bullet X_{i_1}\ldots X_{i_k}=\lambda(i_1,\ldots,i_k)X_{1+i_1+\ldots+i_k}.\]
These coefficients are computed by induction on $k$:
\[\lambda(i_1,\ldots,i_k)=\begin{cases}
1\mbox{ if }k=0,\\
\lambda_{1,i_1}\mbox{ if }k=1,\\
\lambda(i_1,\ldots,i_{k-1})\lambda_{1+i_1+\ldots+i_{k-1},i_k}
\displaystyle -\sum_{j=1}^{k-1}\lambda(i_1,\ldots,i_j+i_k,\ldots,i_{k-1})\lambda_{i_j,i_k}
\mbox{ otherwise}.
\end{cases}\]
By construction, $\lambda(i_1,\ldots,i_k)$ is symmetric in its arguments.

Let us denote by $\phi:\g\longrightarrow V$ the unique pre-Lie algebra morphism sending $\tun$ to $X_1$.
For any tree $t$, we put $\phi(t)=\mu(t) X_{|t|}$ by homogeneity. Then, if $t=B^+(t_1\ldots t_k)$:
\begin{align*}
\phi(t)&=\phi(\tun \bullet t_1\ldots t_k)\\
&=X_1\bullet \phi(t_1)\ldots \phi(t_k)\\
&=\mu(t_1)\ldots \mu(t_k) X_1\bullet X_{|t_1|}\ldots X_{|t_k|}\\
&=\mu(t_1)\ldots \mu(t_k) \lambda(|t_1|,\ldots,|t_k|) X_{|t|}.
\end{align*}
Therefore, $\mu(t)$ is given by:
\begin{align*}
\mu(t)&=\prod_{s \in Vert(t)} \lambda\left(t_1^{(s)},\ldots, \lambda_{k(s)}^{(s)}\right),
\end{align*}
where $k(s)$ is the fertility of the vertex $s$ and $t_1^{(s)},\ldots, t_{k(s)}^{(s)}$ are
the subtrees of $t$ born from $s$.
Denoting by $\Phi:H_{GL}\longrightarrow \mathcal{U}(\g)$ the extension of $\phi$ to $H_{GL}$ 
and by $\Phi^*:\mathcal{U}(g)^*\longrightarrow H_{CK}$ its transpose, then by duality,
putting $(t_n)_{n\geq 1}=\theta^{-1}((\lambda_{i,j})_{i,j\geq 1})$, for all $n\geq 1$:
\[t_n=\Phi^*(X_n^*)=\sum_{|t|=n} \frac{\mu(t)}{s_t}t.\]

\begin{example}
\label{eg::dyson_schwinger_example}
Consider sequences coming from the solution to a Dyson Schwinger equation.  We already know from Theorem~\ref{thm::Foissy_characterization_DSE} that these sequences are in $\seq$, but they are also useful to look at from the $\Lambda$ perspective.  In fact we can check directly that their $\Lambda$-arrays are in $\Lambda$ as follows.

Let $(a,b)\in \K^2$ such that $a+b\neq 0$. For all $i$, we put $\lambda_{i,j}=ai+b$.
For any $i,j,k\geq 1$:
\begin{align*}
\lambda_{i,j}\lambda_{i+j,k}-\lambda_{j,k}\lambda_{i,j+k}
&=(ai+b)(a(i+j)+b)-(a(j-1)+b)(ai+b)\\
&=(ai+b)a(i+1)\\
&=\lambda_{i,k}\lambda_{i+k,j}-\lambda_{k,j}\lambda_{i,j+k}.
\end{align*}
So property 1 is satisfied. For any $n\geq 2$, $\lambda_{1,n-1}=a+b\neq 0$, so property 2 is satisfied. 
An easy induction proves that for any $k\geq 0$:
\[\lambda(i_1,\ldots,i_k)=\prod_{j=1}^k (a(j-k+1)+b).\]
Hence, for any $n\geq 0$:
\[t_n=\sum_{|t|=n} \frac{1}{s_t}\left(\prod_{s\in Vert(t)} \prod_{j=1}^{k(s)} (a(j-k(s)+1)+b)\right) t.\]
For example:
\begin{align*}
t_2&=(a+b) \tdeux,\\
t_3&=(a+b)^2\ttroisdeux+\frac{(a+b)a}{2}\ttroisun,\\
t_4&=(a+b)^3\tquatrecinq+\frac{a(a+b)^2}{2}\tquatrequatre+a(a+b)^2\tquatredeux+\frac{(a+b)a(a-b)}{6}\tquatreun.
\end{align*}
This is the solution of the Dyson-Schwinger equation $X=B^+((1+bX)^{\frac{a+b}{b}})$ if $b\neq 0$
and $X=B^+(e^{aX})$ if $b=0$ of \cite{FoissyDyson}; see Theorem \ref{thm::Foissy_characterization_DSE}.  This includes as special cases the ladders of Example~\ref{eg ladder 4}, the binary rooted trees of Example~\ref{eg binary}, and the plane rooted trees of Example~\ref{eg plane}.
\end{example}

\begin{example}
\label{eg::connes_moscovici_example}
Continuing the Connes-Moscovici example (see Example~\ref{ex::Connes_Moscovici} and Lemma~\ref{lem::connes_mosc_doesnt_satisfy_RGE}), 
for any $i,j\geq 1$, put:
\[\lambda_{i,j}=\binom{i+j}{i-1}.\]
For any $i,j,k\geq 1$:
\begin{align*}
\lambda_{i,j}\lambda_{i+j,k}-\lambda_{j,k}\lambda_{i,j+k}
&=\frac{(i+j+k)!}{(i-1)!(j+1)!(k+1)!}i(j+k+1)\\
&=\lambda_{i,k}\lambda_{i+k,j}-\lambda_{k,j}\lambda_{i,j+k}.
\end{align*}
An easy induction proves that for any $i_1,\ldots,i_k \geq 1$:
\[\lambda(i_1,\ldots,i_k)=\frac{(i_1+\ldots+i_k)!}{i_1!\ldots i_k!}.\]
Hence following through the bijection, on the level of trees we get:
\begin{align*}
t_2&=\tdeux,\\
t_3&=\ttroisdeux+\ttroisun,\\
t_4&=\tquatrecinq+\tquatrequatre+3\tquatredeux+\tquatreun.
\end{align*}
Let us prove that for any tree $t$:
\[\mu(t)=\sum_{\mbox{\scriptsize $s$ leaf of $t$}} \mu(t\setminus s).\]
This is true if $t=\tun$, with the convention $\mu(1)=1$. Let us assume that the result is true
for any tree $t'$ such that $|t'|<[t|$. We put $t=B^+(t_1\ldots t_k)$, and $|t_i|=n_i$. Then:
\begin{align*}
\sum_{\mbox{\scriptsize $s$ leaf of $t$}} \mu(t\setminus s)
&=\sum_{i=1}^k \sum_{\mbox{\scriptsize $s$ leaf of $t_i$}} \mu(t\setminus s)\\
&=\sum_{i=1}^k \sum_{\mbox{\scriptsize $s$ leaf of $t_i$}} \frac{(n_1+\ldots+n_k-1)!}
{n_1!\ldots (n_i-1)!\ldots n_k!} \mu(t_1)\ldots \mu(t_i\setminus s)\ldots \mu(t_k)\\
&=\sum_{i=1}^k \frac{(n_1+\ldots+n_k-1)!}
{n_1!\ldots (n_i-1)!\ldots n_k!} \mu(t_1)\ldots\mu(t_{i-1})\mu(t_{i+1})\ldots \mu(t_k)
\sum_{\mbox{\scriptsize $s$ leaf of $t_i$}} \mu(t_i\setminus s)\\
&=\sum_{i=1}^k \frac{(n_1+\ldots+n_k-1)!}
{n_1!\ldots (n_i-1)!\ldots n_k!} \mu(t_1)\ldots\mu(t_{i-1})\mu(t_{i+1})\ldots \mu(t_k)\mu(t_i)\\
&=\mu(t_1)\ldots \mu(t_k)\frac{(n_1+\ldots+n_k-1)!}
{n_1!\ldots n_k!} \sum_{i=1}^k n_i\\
&=\mu(t_1)\ldots \mu(t_k)\frac{(n_1+\ldots+n_k)!}{n_1!\ldots n_k!}\\
&=\mu(t).
\end{align*}
\end{example}

\subsection{Further examples from a pre-Lie structure}

In \cite{Foissycontrole}, a pre-Lie product $\bullet$ is defined on $H_{CK}$ by the following:
if $f$ and $f'$ are forests,
\[f\bullet f'=\sum_{s\in Vert(f)}\mbox{grafting of $f'$ on $s$}.\]
This is a pre-Lie product which the reader will note is similar in nature to the product of $H_{GL}$. Moreover, for any $x,y,z\in H_{CK}$:
\begin{align*}
(xy)\bullet z&=(x\bullet z)y+x(y\bullet z),\\
\Delta(x\bullet y)&=x^{(1)}\otimes x^{(2)}\bullet y+x^{(1)}\bullet y^{(1)}\otimes x^{(2)}y^{(2)}.
\end{align*}

\begin{prop}
\label{prop::examples_from_prelie_structure}
Let $t'_1,\ldots,t'_N$ be a finite sequence of elements of $H_{CK}$ such that:
\begin{enumerate}
\item $t'_1=\tun$.
\item For all $i$, $t'_i$ is a nonzero linear combination of trees of degree $i$.
\item $A=\K[t'_1,\ldots,t'_N]$ is a Hopf subalgebra of $H_{CK}$.
\end{enumerate}
Let $X$ be a nonzero primitive element of $A$, homogeneous of degree $N$. We define a sequence $(t_n)_{n\geq 1}$ by:
\[t_n=\begin{cases}
t'_n\mbox{ if }n\leq N,\\
t_{n-N}\bullet X\mbox{ if }n>N.
\end{cases}\]
Then $(t_n)_{n\geq 1}\in \seq$. The associated double sequence is denoted by $(\lambda_{i,j})_{i,j\geq 1}$.
If $k+l>N$:
\[\lambda_{k,l}=\lambda_{k-N,l}+\lambda_{k,l-N}+\alpha k \delta_{l,N},\]
with the convention $\lambda_{i,j}=0$ if $i\leq 0$ or $j\leq 0$ and where $\alpha$ is the coefficient of $t_N$
in $X$. 
\end{prop}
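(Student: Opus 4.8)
The plan is to verify the two assertions in turn: first that $(t_n)_{n\ge 1}\in\seq$, and then that its $\Lambda$-array satisfies the stated recurrence. Throughout I would use the two compatibilities of the grafting product $\bullet$ recorded just before the statement, together with the single defining identity $t_i\bullet X=t_{i+N}$ for $i\ge 1$, which is immediate from $t_{i+N}=t_{(i+N)-N}\bullet X$.

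For membership in $\seq$, the conditions $t_1=\tun$ and ``$t_n$ homogeneous of degree $n$'' are clear, since grafting a degree-$N$ element raises degree by $N$. The essential point is that $B=\K[t_n:n\ge 1]$ is a Hopf subalgebra. Because $\cdot\bullet X$ is a derivation for the forest product and sends each generator $t_i$ to the generator $t_{i+N}$, it maps $B$ into $B$; the same is clear for multiplication by $X$ and for $\cdot\bullet 1$. I would then show by induction on $n$ that $\Delta(t_n)\in B\otimes B$: for $n\le N$ this is the hypothesis that $A$ is a Hopf subalgebra, and for $n>N$ I expand $\Delta(t_{n-N}\bullet X)$ by the coproduct compatibility and use that $X$ is primitive. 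Here one notes that, being a primitive element of the Hopf subalgebra $A$, $X$ is primitive in $H_{CK}$, so that $\Delta(X)=X\otimes\One+\One\otimes X$; consequently every tensor factor occurring is obtained from $\Delta(t_{n-N})\in B\otimes B$ by applying one of the three $B$-preserving operations above. A connected graded subbialgebra is automatically a Hopf subalgebra, so this suffices. Nonzero-ness of each $t_n$ I would get by a leading-term argument: the terms of maximal root-degree in $t_{n-N}\bullet X$ come only from grafting at roots, and in a fixed term order grafting the largest tree of $X$ onto the root of a maximal tree of $t_{n-N}$ gives a maximal term that cannot be cancelled, so $t_{n-N}\bullet X\neq 0$.

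For the recurrence I would compute $\lambda_{k,l}$, the coefficient of $t_l\otimes t_k$ in $\Delta(t_{k+l})$, directly from $t_{k+l}=t_M\bullet X$ with $M=k+l-N$. Applying the coproduct compatibility and using primitivity of $X$ — which kills every contribution where an admissible cut passes through the interior of a grafted copy of $X$ — the surviving terms fall into exactly three families according to the fate of the grafted $X$. If $X$ stays inside the root component, the contribution is the coefficient of $t_l\otimes t_{k-N}$ in $\Delta(t_M)$, i.e. $\lambda_{k-N,l}$ (using $t_{k-N}\bullet X=t_k$). If $X$ stays inside the pruned component, the contribution is the coefficient of $t_{l-N}\otimes t_k$ in $\Delta(t_M)$, i.e. $\lambda_{k,l-N}$. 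The third family arises only when the whole grafted $X$ is severed off into the pruned part; this forces the pruned part to have degree $N$, i.e. $l=N$, its contribution carries the coefficient $\alpha$ of $t_N$ in $X$, and since the surviving root tree is $t_M=t_k$ the grafting could have occurred at any of its $k$ vertices, producing the factor $k$. Summing yields $\lambda_{k,l}=\lambda_{k-N,l}+\lambda_{k,l-N}+\alpha k\,\delta_{l,N}$, with $\lambda_{i,j}=0$ once an index is non-positive.

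The main obstacle is the bookkeeping in this last step, and in particular the factor $k$ in the diagonal term. This factor does not fall out of a naive reading of the compatibility formula: it reflects that grafting onto a degree-$k$ forest sums over its $k$ vertices (equivalently, that $y\bullet\One=\deg(y)\,y$), in contrast to the first two families, where the grafting-site sum is already absorbed into the identity $t_i\bullet X=t_{i+N}$. One must also check that the ``sever $X$ whole'' terms are not double-counted among the other two families, and isolate precisely which single-generator monomials $t_l\otimes t_k$ occur, as opposed to decomposable terms, which are irrelevant to the $\Lambda$-array. To guard against a sign or convention error I would finally cross-check the formula on the Connes--Moscovici case $N=1$, $X=\tun$, where $\alpha=1$ and $\lambda_{i,j}=\binom{i+j}{i-1}$, and the recurrence reduces to a Pascal-type identity.
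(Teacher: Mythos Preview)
Your proposal is correct and follows essentially the same route as the paper. The paper packages the argument by introducing the derivations $F(x)=x\bullet X$ and $D(x)=x\bullet 1$ and proving the single identity $\Delta\circ F=(F\otimes Id+Id\otimes F)\circ\Delta+(D\otimes Id)\circ\Delta\cdot(1\otimes X)$, then reading off the recurrence from the linear part; your three ``families according to the fate of the grafted $X$'' are exactly the three terms of that identity, and your identification of the factor $k$ with $t_k\bullet 1=k\,t_k$ is precisely the paper's use of $D$.
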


\begin{proof}
As $X$ is nonzero and homogeneous of degree $N$, an easy induction proves that $t_n$ 
is a nonzero linear combination of trees of degree $n$ for any $n$.
We shall use the two following maps:
\begin{align*}
F&:\left\{\begin{array}{rcl}
H_{CK}&\longrightarrow&H_{CK}\\
x&\longrightarrow&x\bullet X,
\end{array}\right.&
D&:\left\{\begin{array}{rcl}
H_{CK}&\longrightarrow&H_{CK}\\
x&\longrightarrow&x\bullet 1.
\end{array}\right.
\end{align*}
Both are derivations of $H_{CK}$. Moreover, for any $n \geq 1$:
\begin{align*}
F(t_n)&=t_{n+N},&D(t_n)&=nt_n,
\end{align*}
where the latter identity holds because there are $n$ places in each tree of $t_n$ into which to graft, but grafting on $1$ leaves each tree unchanged, so we are left just counting vertices.
Consequently, $A$ is stable under $F$ and $D$. Moreover, for any $x\in H_{CK}$:
\begin{align*}
\Delta\circ N(x)&=\Delta(x\bullet X)\\
&=x^{(1)}\otimes x^{(2)}\bullet X+x^{(1)}\bullet X^{(1)}\otimes x^{(2)}X^{(2)}\\
&=x^{(1)}\otimes x^{(2)}\bullet X+x^{(1)}\bullet X\otimes x^{(2)}+x^{(1)}\bullet 1\otimes x^{(2)}X\\
&=(F\otimes Id +Id \otimes F)\circ \Delta(x)+(D\otimes Id)\circ \Delta(x)(1\otimes X). 
\end{align*}
Let us prove that $\Delta(t_n) \in A\otimes A$ by induction on $n$. If $n<N$, $t_n=t'_n$.
As $\K[t'_1,\ldots,t'_N]$ is a Hopf subalgebra of $H_{CK}$, $\Delta(t_n)\in \K[t'_1,\ldots,t'_N]^{\otimes 2}
\subseteq A^{\otimes 2}$. Otherwise, $t_n=F(t_{n-N})$. By the induction hypothesis, $\Delta(t_{n-N})\in A\otimes A$,
so:
\begin{align*}
\Delta(t_n)&\in (F\otimes Id +Id \otimes F)(A\otimes A)+(D\otimes Id)(A\otimes A)(1\otimes X).
\end{align*}
As $X\in A$ and $A$ is stable under $F$ and $D$, $\Delta(t_n) \in A\otimes A$. Finally, $(t_n)_{n\geq 1}\in \seq$.\\

By definition of $(\lambda_{i,j})_{i,j\geq 1}$, for any $n$:
\[\Delta(t_n)=t_n\otimes 1+1\otimes t_n+\sum_{i+j=n} \lambda_{i,j}t_i\otimes t_j+\mbox{nonlinear terms}.\]
Consequently:
\begin{align*}
&\Delta(t_{n+N})\\
&=\Delta\circ F(t_n)\\
&=t_{n+N}\otimes 1+1\otimes t_{n+N}+\sum_{i+j=n}\lambda_{i,j}(t_{N+i}\otimes t_j+t_i\otimes t_{N+j})
+nt_n\otimes \alpha t_N+\mbox{nonlinear terms}.
\end{align*}
Hence, if $k+l=n+N$, $\lambda_{k,l}=\lambda_{k-N,l}+\lambda_{k,l-N}+k\alpha \delta_{l,N}$. \end{proof}

\begin{example}\label{subexample}
As a subexample of the above,  take $t'_1=\tun$, $t'_2=\tdeux$ and $X=2\tdeux-\tun\tun$. Then:
\begin{align*}
t_1&=\tun,\\
t_2&=\tdeux,\\
t_3&=2\ttroisdeux-\ttroisun,\\
t_4&=2\tquatrecinq+2\tquatredeux-\tquatrequatre-\tquatreun,\\
t_5&=4\tcinqquatorze+4\tcinqdouze-2\tcinqtreize-2\tcinqdix-4\tcinqhuit+2\tcinqsix+4\tcinqcinq-4\tcinqdeux+\tcinqun.
\end{align*}
The associated double sequence $(\lambda_{i,j})_{i,j\geq 1}$ can be inductively computed by:
\begin{align*}
&&\lambda_{1,1}&=1,\\
&\mbox{If }k+l\geq 2,&\lambda_{k,l}&=\lambda_{k-2,l}+\lambda_{k,l-2}+2k\delta_{l,2}.
\end{align*}
This gives: 
\begin{enumerate}
\item If $k,l\geq 1$, $\displaystyle \lambda_{2k,2l}=4\binom{k+l}{l+1}$.
\item If $k\geq 0$ and $l\geq 1$, $\displaystyle\lambda_{2k+1,2l}=2\binom{k+l+1}{l+1}+2\binom{k+l}{l+1}$.
\item If $k,l\geq 0$, $\displaystyle\lambda_{2k+1,2l+1}=\binom{k+l}{l}$.
\item If $k\geq 1$ and $l\geq 0$, $\lambda_{2k,2l+1}=0$.
\end{enumerate}
\end{example}

Another example of a similar nature, but not included in the family of Proposition \ref{prop::examples_from_prelie_structure}, is the following.

\begin{example}\label{another example}
Let $a,b,c \in \K$. We consider the double sequence $(\lambda_{i,j})$ defined by:
\begin{itemize}
\item If $k,l \geq 1$, $\lambda_{2k,2l}=ak+b$.
\item If $k\geq 0$ and $l\geq 1$, $\displaystyle \lambda_{2k+1,2l}=ak+\frac{a+b}{2}$.
\item If $k\geq 1$ and $l\geq 0$, $\lambda_{2k,2l+1}=0$.
\item If $k,l\geq 0$, $\lambda_{2k+1,2l+1}=c$.
\end{itemize}
This family satisfies the pre-Lie relation, and it is an element of $\Lambda$ if, and only if $a+b\neq 0$ and $c\neq 0$.
The coefficient $\lambda(i_1,\ldots,i_k)$ only depends on $(p,q)$, where $p$ is the number of even $i_j$
and $q$ is the number of odd $i_k$: we shall denote by $\lambda'(p,q)$ their common value.
These coefficients can be inductively computed: if $p,q \geq 0$,
\begin{align*}
\lambda'(0,0)&=1,\\
\lambda'(p+1,0)&=\left(\frac{a+(3-2p)b}{2}\right)\lambda'(p,0),\\
\lambda'(p,q+1)&=\begin{cases}
c(\lambda'(p,q)-q\lambda'(p+1,q-1))\mbox{ if $q$ is odd},\\
-cq\lambda'(p+1,q-1)\mbox{ if $q$ is even}.
\end{cases}
\end{align*}
This gives:
\begin{align*}
t_1(a,b,c)&=\tun,\\
t_2(a,b,c)&=c\tdeux,\\
t_3(a,b,c)&=\frac{c(a+b)}{2}\ttroisdeux-\frac{c(a+b)}{4}\ttroisun,\\
t_4(a,b,c)&=\frac{c^2(a+b)}{2}\tquatrecinq+\frac{c^2(a+b)}{2}\tquatredeux-
\frac{c^2(a+b)}{4}\tquatrequatre-\frac{c^2(a+b)}{4}\tquatreun,\\
t_5(a,b,c)&=\frac{c^2(a+b)^2}{4}\tcinqquatorze+\frac{c^2(a+b)^2}{4}\tcinqdouze-\frac{c^2(a+b)^2}{8}
\tcinqtreize-\frac{c^2(a+b)^2}{8}\tcinqdix-\frac{c^2(a+b)^2}{4}\tcinqhuit\\
&+\frac{c^2(a+b)^2}{8}\tcinqsix
+\frac{c^2(a+b)(a-b)}{8}\tcinqcinq-\frac{c^2(a+b)(a-b)}{8}\tcinqdeux+\frac{c^2(a+b)(a-b)}{32}\tcinqun.
\end{align*}

An especially interesting case is $a=b$. If this holds, only a finite number of $\lambda'(p,q)$ are nonzero;
they are listed below.
\begin{align*}
\lambda'(1,0)&=a,&\lambda'(0,1)&=c,&\lambda'(1,1)&=ac,&\lambda'(0,2)&=-ac,&\lambda'(0,3)&=-3ac^2.
\end{align*}
\end{example}

\begin{remark}
Let us compare Example~\ref{another example} with Example~\ref{subexample}. We find:
\begin{align*}
t_1(a,b,c)&=t_1,\\
t_2(a,b,c)&=c t_2,\\
t_3(a,b,c)&=\frac{c(a+b)}{4} t_3,\\
t_4(a,b,c)&=\frac{c^2(a+b)}{4} t_4,\\
t_5(a,b,c)&=\frac{c^2(a+b)^2}{16}t_5-\frac{c^2(a+b)(a+3b)}{8}(\tcinqcinq+\tcinqdeux-4\tcinqun).
\end{align*}
Hence, $t_5(a,b,c)$ and $t_5$ are colinear if, and only if, $a+3b=0$.
A similar computation proves that $t_6(a,b,c)$ and $t_6$ are colinear if, and only if, $b=0$. 
Hence, as $a+b\neq 0$, $\K[t_n(a,b,c),n\geq 1]$ and $\K[t_n,n\geq 1]$ are different.
\end{remark}

\subsection{Definition of strong and weak sequences}

Suppose that $s = (t_n)_{n \geq 1}$ is in $\seq$. Then we can define nonzero Feynman rules $\phi$ as discussed in Section \ref{sect::higher_order_RGEs} to turn $X_s$ into a Green function to see if it is in fact the solution of a generalized renormalization group equation. 

\begin{defi}
\label{def::strong_sequence}
If a sequence $s \in \seq$ satisfies a $k$th order renormalization group equation for any choice of Feynman rules and for $\beta_1^{(k)} \neq 0$, then we say $s$ is a {\bf strong $k$th order sequence}.
\end{defi}

\begin{defi}
\label{def::weak_sequence}
If a sequence $s \in \seq$ satisfies a $k$th order renormalization group equation but is not strong, we say that it is a {\bf weak $k$th order sequence}.
\end{defi}

Sequences $s \in \seq$ that lead to strong $k$th order sequences have a unique order by which to be classified, as the conditions imposed on them mean that their order can be read off of the left-diagonal in the array depicted in Figure \ref{fig::array_of_coefficients}; this order is given by the order of the sequence $(\lam_{i,1})_{i \geq 1}$. Furthermore, since the leftward diagonals of Figure~\ref{fig::array_of_coefficients} must be sequences of order $k$ or lower for every nonzero choice of $\phi$, we see that the associated $\Lambda$-array must have all of its leftward diagonals as sequences of order $k$ or lower and the leftmost diagonal must be order exactly $k$ on account of the condition $\beta_1^{(k)} \neq 0$. Moreover any such $\Lambda$-array will give a strong $k$th order sequence.  Summarizing:

\begin{prop}
A sequence $s\in \seq$ is strong $k$th order if and only if its $\Lambda$-array $\theta(s)$ has all leftward diagonals of order at most $k$ and the leftmost diagonal of order exactly $k$.
\end{prop}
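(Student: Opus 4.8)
The plan is to read off the order of the renormalization group equation from the leftward diagonals of the array in Figure~\ref{fig::array_of_coefficients} and then remove the dependence on the Feynman rules by exploiting how those diagonals factor through $\theta(s)$. Concretely, for Feynman rules $\phi$ with associated infinitesimal character $\sigma$ (Lemma~\ref{lem::explicit_feynman_rules}), Proposition~\ref{prop and def of lambda} together with Figure~\ref{fig::array_of_coefficients} identifies the $j$th leftward diagonal of the array of $c_{n,i}$ with the sequence $\sigma(t_j)\,(\lam_{i,j})_{i\ge 1}$, i.e. with $\sigma(t_j)$ times the $j$th leftward diagonal of the $\Lambda$-array $\theta(s)$. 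Since scaling a sequence by a fixed nonzero constant preserves its order (and scaling by $0$ yields the zero sequence), the order of the $j$th $c$-diagonal equals the order of $(\lam_{i,j})_{i\ge 1}$ whenever $\sigma(t_j)\neq 0$. I work throughout with nondegenerate Feynman rules, meaning $\sigma(t_1)\neq 0$ where $t_1=\tun$; this is exactly the hypothesis under which the $Q_i(L)$ are linearly independent and Lemma~\ref{lem summary equiv} applies, and it sets aside the trivial rules noted after Lemma~\ref{lem::connes_mosc_doesnt_satisfy_RGE}.

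For the forward implication, assume $s$ is strong $k$th order and fix $j\ge 1$. As $t_j$ is a nonzero linear combination of trees of degree $j$, I can select an infinitesimal character with $\sigma(t_1)\neq 0$ and $\sigma(t_j)\neq 0$, simply by prescribing $\sigma$ on $t_1=\tun$ and on one tree occurring in $t_j$ with nonzero coefficient. For the corresponding $\phi$, Definition~\ref{def::strong_sequence} guarantees a $k$th order equation, so by Lemma~\ref{lem summary equiv} every leftward diagonal of the $c$-array has order at most $k$; in particular $\sigma(t_j)(\lam_{i,j})_{i\ge 1}$ does, and dividing by $\sigma(t_j)\neq 0$ shows $(\lam_{i,j})_{i\ge 1}$ has order at most $k$. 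Letting $j$ vary shows all leftward diagonals of $\theta(s)$ have order at most $k$. The condition $\beta_1^{(k)}\neq 0$ in Definition~\ref{def::strong_sequence} says precisely that the leftmost diagonal of the $c$-array has order exactly $k$, as established in the derivation of Section~\ref{sect::higher_order_RGEs}; since that diagonal is $\sigma(t_1)(\lam_{i,1})_{i\ge 1}$ with $\sigma(t_1)\neq 0$, the leftmost diagonal $(\lam_{i,1})_{i\ge 1}$ of $\theta(s)$ has order exactly $k$.

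Conversely, suppose $\theta(s)$ has all leftward diagonals of order at most $k$ and leftmost diagonal of order exactly $k$, and let $\phi$ be any nondegenerate Feynman rules. Each leftward diagonal of the $c$-array is $\sigma(t_j)(\lam_{i,j})_{i\ge 1}$, a scalar multiple of a sequence of order at most $k$, hence of order at most $k$; by Lemma~\ref{lem summary equiv} the Green function $G=\phi(X_s)$ satisfies a generalized renormalization group equation of order at most $k$. Moreover the leftmost diagonal is $\sigma(t_1)(\lam_{i,1})_{i\ge 1}$, and since $\sigma(t_1)\neq 0$ while $(\lam_{i,1})_{i\ge 1}$ has order exactly $k$, this diagonal has order exactly $k$; by the same dictionary this is the assertion $\beta_1^{(k)}\neq 0$, so the equation has order exactly $k$. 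As $\phi$ was an arbitrary nondegenerate choice, $s$ is strong $k$th order.

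The one genuinely delicate point is handling the quantifier ``for any choice of Feynman rules'' against the scaling factors $\sigma(t_j)$. In the forward direction the danger is that a careless choice of $\phi$ could annihilate a diagonal (when $\sigma(t_j)=0$) and thereby hide its true order; the resolution is that strength is demanded for \emph{every} $\phi$, so for each $j$ separately I am free to pick a $\phi$ witnessing $\sigma(t_j)\neq 0$, after which the degree transfers cleanly. In the backward direction the matching care is to restrict to nondegenerate $\phi$, for which the $Q_i(L)$ are independent and Lemma~\ref{lem summary equiv} is available; the degenerate rules, including the identically zero ones, are precisely those the framework excludes. Once these quantifier issues are dispatched, the equivalence reduces to the order-preserving correspondence between the leftward diagonals of the $c$-array and those of $\theta(s)$.
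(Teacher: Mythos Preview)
Your proof is correct and follows the same approach as the paper: the paper's argument is the short paragraph immediately preceding the proposition, which observes that the leftward diagonals of Figure~\ref{fig::array_of_coefficients} are $\sigma(t_j)(\lambda_{i,j})_{i\ge 1}$ and that requiring these to be order $\le k$ for every nonzero $\phi$ forces the corresponding condition on $\theta(s)$, with $\beta_1^{(k)}\ne 0$ pinning the leftmost diagonal to order exactly $k$. Your version is more carefully argued---in particular you make explicit the point, which the paper leaves implicit, that in the forward direction one may exploit the universal quantifier over $\phi$ by choosing, for each fixed $j$, an infinitesimal character with $\sigma(t_j)\ne 0$ so as to read off the true order of $(\lambda_{i,j})_{i\ge 1}$---and you are right to flag the restriction to nondegenerate $\phi$ (i.e.\ $\sigma(\tun)\ne 0$) as the hypothesis under which Lemma~\ref{lem summary equiv} applies, matching the paper's standing convention after Lemma~\ref{lem::connes_mosc_doesnt_satisfy_RGE}.
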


\begin{remark}\label{rem homog}
We can determine what condition on the $\Lambda$-array corresponds to the property that a strong $k$th order sequence's generalized renormalization group equations are homogeneous.  To this end fix an arbitrary choice of Feynman rules and consider the derivation of the fact that the diagonal sequences are order $k$ beginning on p\pageref{gRGE derivation}.  In this derivation, we are comparing coefficients in $\frac{\partial}{\partial L}G$ and $\overline{\beta}G$ for positive powers of $L$ and $x$.  If the generalized renormalization group equation is homogeneous then the coefficients of $\frac{\partial}{\partial L}G$ and $\overline{\beta}G$ must also match for $L^0$ and all powers of $x$.  Consequently for a homogeneous generalized group equation the triangular array, extended to include $c_{0,j}$ and $c_{i,0}$ entries, will still have all leftward diagonals of order $k$ or less.  

Translating over to $\seq$ and the $\Lambda$-arrays, $\lambda_{i,j}$ is the coefficient of $t_j\otimes t_i$ in $\Delta(t_{i+j})$ so extending the $\Lambda$-array to $0$th indexed entries corresponds to adding a $t_0$ term to the element of $\seq$ and the coefficients of $t_n\otimes t_0$ and $t_0\otimes t_n$ to the $\Lambda$-array. In the transition from the $c_{i,j}$ triangular arrays to the $\Lambda$-arrays we took the convention that elements of $\seq$ were normalized so that $t_1=1$.  Because of this we cannot assume that $t_0=1$, but only that $t_0$ is some nonzero constant, write it as $1/c$.  Since the coproducts of trees all have a primitive part the coefficients of  $t_n\otimes t_0$ and $t_0\otimes t_n$ in $\Delta(t_{n+1})$ are all $c$, as is the coefficient of $t_0\otimes t_0$ in $\Delta(t_0)$, so the entries of the new outer diagonals on the $\Lambda$-array are all $c$.

This tells us that given a $\Lambda$-array for a $k$th order strong sequence, if the generalized renormalization group equation for the sequence is homogeneous then the $\Lambda$-array has the property that if we add new outer diagonals of a constant value $c$ then the leftward diagonals of this enlarged array remain $k$th order or less.

In the reverse direction, if we can add such an outer layer, then returning one last time to the derivation of p\pageref{gRGE derivation}, rescaling $t_0$ to $1$ and $t_1$ to $c\bullet$ to match the conventions in place for that derivation, we see that the identities hold not only for positive coefficients of $L$ and $x$, but also for the coefficients involving $L^0$ and $x^0$, and so no $\gamma_0$ term is required in the generalized renormalization group equation.  That is, the generalized renormalization group equation is homogeneous.

In summary, let $s$ be a strong $k$th order sequence.  
The $\Lambda$-array of $s$ can be extended to have $0$ indexed entries all of value $c$ while maintaining the property that all leftward sequences are of order $k$ or less if and only if the generalized renormalization group equations satisfied by $s$ for each choice of Feynman rules are homogeneous.
\end{remark}

In Section \ref{sect::characterization_of_strong_sequences} we characterize strong $k$th order sequences.

Weak sequences are much more difficult to work with.  Note that there are two ways to be a weak $k$th order sequences, either $G(x,L) = \phi(X_s)$ satisfies a $k$th-order renormalization group equation for some (but not all) choices of Feynman rules, and/or $G(x,L) = \phi(X_s)$ satisfies a $k$th-order renormalization group equation such that $\beta_1^{(k)} = 0$. In the first case we have freedom to choose Feynman rules which get rid of problem parts of the $\Lambda$-array, and so weak $k$th order sequences can be much wilder.  We will comment on some nice cases and examples in Section~\ref{sect::comments_on_weak_sequences}.

\begin{example}\label{eg corolla 2}
The corollas (see Example~\ref{eg corolla 1}), because of all the $0$s in their $\Lambda$ array, can easily be scaled to generate strong sequences of any order.  Consider the sequence given by $t_n = (n!)^kc_n$ where the $c_n$ is the corolla on $n$ vertices.  Then the coefficient of $c_1\otimes c_{n-1}$ in $\Delta(t_n)$ is $n(n!)^k$, so the coefficient of $t_1\otimes t_{n-1}$ in $\Delta(t_n)$ is $n^{k+1}$.  This gives a leftmost diagonal in the $\Lambda$-array of order $k+1$ and all other diagonals remain $0$.  Therefore the sequence of these $t_n$ is a strong $(k+1)$st order sequence.
\end{example}

\section{Characterization of strong sequences}
\label{sect::characterization_of_strong_sequences}

\subsection{Strong $0$th order sequences}
\label{subsect::strong_0th_order_seq}

For each $n\geq 0$, let $\ell_n$ be the ladder with $n$ vertices as in Example~\ref{eg ladder 1}.
For any $n\geq 1$, we put
\begin{align*}
p_n&=\ln^*(\ell_n)\\
&=\sum_{k=1}^n \sum_{\substack{j_1+\ldots+j_k=n,\\ j_1,\ldots,j_k\geq 1}} 
\frac{(-1)^{k+1}}{k}\ell_{j_1}\ldots \ell_{j_k}\\
&=\sum_{1i_1+\ldots+ni_n=n}\frac{(-1)^{i_1+\ldots+i_n+1}}{i_1+\ldots+i_n} 
\frac{(i_1+\ldots+i_n)!}{i_1!\ldots i_n!}\ell_1^{i_1}\ldots \ell_n^{i_n},
\end{align*}
where $\ln^*$ is the convolution log. In a completion of $H_{CK}$, we put
\[X=\sum_{k\geq 1}\ell_k.\]
Then:
\[\sum_{k\geq 1}p_k=\ln^*(1+X)=\ln(1+X).\]
As $1+X$ is a group-like element, this is a primitive element, so for any $n\geq 1$,
its $n$-th homogeneous component is primitive; that is, the $p_n$ are primitive. 

\begin{prop}\label{prop z in seq}
Let $n\geq 1$ and let $b$ be a nonzero element of $\K$. We consider 
\[Z_n(b) :=\sum_{k\geq 1}z_k=B_+\left(\exp\left(\sum_{k=1}^{n-1}p_k+bp_n\right)\right).\]
Then $(z_k)_{k\geq 1}$ belongs to $\seq$. The corresponding family $(\lambda_{i,j})_{i,j\geq 1}$ is given by:
\[\lambda_{i,j}=\begin{cases}
1\mbox{ if }j<n,\\
b\mbox{ if }j=n,\\
0\mbox{ otherwise}.
\end{cases}\]
\end{prop}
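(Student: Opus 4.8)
The plan is to exploit that $Y:=\exp\!\left(\sum_{k=1}^{n-1}p_k+bp_n\right)$ is group-like and, crucially, that it lies in the ladder subalgebra, and then to read everything off the cocycle identity \eqref{eq::one_cocycle_property}. First I would record two properties of $Y$. Since each $p_k$ is primitive, $P:=\sum_{k=1}^{n-1}p_k+bp_n$ is primitive, so $Y=\exp(P)$ is group-like, i.e. $\Delta(Y)=Y\otimes Y$ in the graded completion. Moreover, for $k\le n$ the element $p_k$ is by construction a polynomial in $\ell_1,\dots,\ell_k$, so $P\in\K[\ell_1,\dots,\ell_n]$, and hence every homogeneous component $Y_m$ of $Y=\sum_{m\ge 0}Y_m$ (with $Y_0=\One$) lies in $\K[\ell_1,\dots,\ell_n]$ as well.

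Next I would identify the sequence. Because $B_+$ raises degree by one, $z_k=B_+(Y_{k-1})$; in particular $z_1=B_+(\One)=\bullet$. Writing the ladder subalgebra as the polynomial algebra $\K[p_1,p_2,\dots]$ on the algebraically independent primitives $p_k$ (note $p_k=\ell_k+\text{products of lower }\ell$), the degree-$m$ part of $\exp(P)$ contains the monomial $\tfrac{1}{m!}p_1^{\,m}=\tfrac{1}{m!}\bullet^{\,m}$, so $Y_m\ne 0$ and hence $z_k\ne 0$ for all $k$, giving condition~1 of the definition of $\seq$. For $m\le n-1$ only $p_1,\dots,p_{n-1}$ (all with coefficient $1$) enter $Y_m$, so $Y_m$ agrees with the degree-$m$ part of $\exp\!\left(\sum_k p_k\right)=\One+X$, i.e. $Y_m=\ell_m$; therefore $z_k=B_+(\ell_{k-1})=\ell_k$ for all $k\le n$. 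This is the key point: it yields $\K[\ell_1,\dots,\ell_n]\subseteq A:=\K[z_k:k\ge1]$, so by the first paragraph both $Y$ and $P$ lie in the graded completion $\widehat A$.

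Then I would prove the Hopf property. Applying \eqref{eq::one_cocycle_property} to the group-like $Y$ gives
\[\Delta(Z)=B_+(Y)\otimes\One+(\mathrm{Id}\otimes B_+)(Y\otimes Y)=Z\otimes\One+Y\otimes Z,\]
and extracting degree-$k$ components (using $B_+(Y_c)=z_{c+1}$) yields
\[\Delta(z_k)=z_k\otimes\One+\One\otimes z_k+\sum_{r=1}^{k-1}Y_{k-r}\otimes z_r.\]
Since $Y,Z\in\widehat A$ and $\Delta$ is graded, the degree-$k$ part $\Delta(z_k)$ of $Z\otimes\One+Y\otimes Z$ lies in $\bigoplus_{a+b=k}A_a\otimes A_b\subseteq A\otimes A$; hence $A$ is a Hopf subalgebra and $(z_k)_{k\ge1}\in\seq$.

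Finally I would compute the $\lambda_{i,j}$. By Proposition~\ref{prop and def of lambda}, $\lambda_{i,j}$ is the coefficient of $z_j\otimes z_i$ in $\Delta(z_{i+j})$. In the displayed coproduct the second tensor factor is always a single generator, so with $k=i+j$ the only contribution with second factor $z_i$ is the $r=i$ term $Y_j\otimes z_i$; thus $\lambda_{i,j}$ equals the coefficient of the linear term $z_j$ in $Y_j$, visibly independent of $i$. For $j<n$ we have $Y_j=\ell_j=z_j$, giving $1$. For $j=n$, $Y_n=b\,p_n+(\text{products of }p_k,\ k<n)$, and in the $\ell$-expansion only $b\,p_n=b\,\ell_n+\cdots$ produces the single tree $\ell_n=z_n$ (the remaining terms are products of at least two trees), giving $b$. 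For $j>n$, $Y_j\in\K[\ell_1,\dots,\ell_n]=\K[z_1,\dots,z_n]$, and every degree-$j$ monomial there is a product of at least two generators (each of degree $\le n<j$), so there is no $z_j$ term and the coefficient is $0$. This matches the claimed table. The main obstacle is the middle step: recognizing that the first $n$ generators coincide with ladders, which is exactly what forces $Y\in\widehat A$ and makes the coproduct close inside $A\otimes A$; the remainder is bookkeeping, the only delicate point being to separate genuine single-tree (linear in the generators) contributions from products when reading off $\lambda_{i,n}$.
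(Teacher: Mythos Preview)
Your proof is correct and follows essentially the same strategy as the paper: both use that $\exp(P)$ is group-like, apply the 1-cocycle identity for $B_+$ to compute $\Delta(Z)$, identify $z_k=\ell_k$ for $k\le n$ so that the exponential lies in $\K[z_1,\dots,z_n]$, and then read off $\lambda_{i,j}$ from the linear (single-tree) part of the left tensor factor. Your write-up is in fact slightly more careful in a couple of places (the non-vanishing of $z_k$ via algebraic independence of the $p_k$, and the orientation of the tensor factors when applying \eqref{eq::one_cocycle_property}), but the argument is the same.
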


\begin{proof}
We put $Y=p_1+\ldots+p_{n-1}+bp_n$ and let $Z$ denote the corresponding $Z_n(b)$. Now $Y$ is a primitive element, so:
\begin{align*}
\Delta(Y)&=Y\otimes 1+1\otimes Y,\\
\Delta(\exp(Y))&=\exp(Y\otimes 1+1\otimes Y)\\
&=\exp(Y\otimes 1)\exp(1\otimes Y)\\
&=\exp(Y)\otimes \exp(Y),\\
\Delta(Z)&=1\otimes Z+B^+(\exp(Y))\otimes \exp(Y)\\
&=1\otimes Z+Z\otimes \exp(Y).
\end{align*}
For any $k<n$, the $k$-th homogeneous component of $Y$ is
\begin{align*}
y_k&=\pi_k\left(\exp\left(p_1+\ldots+p_{n-1}+bp_n\right)\right)\\
&=\pi_k\left(\exp\left(\sum_{k\geq 1}p_k\right)\right)\\
&=\pi_k\left(\exp\left(\ln(1+X)\right)\right)\\
&=\pi_k(1 + X)\\
&=\ell_k.
\end{align*}
As $z_{k+1}=B^+(y_k)$ for any $k\geq 1$, for any $k\leq n$, $z_k=\ell_k$.
By construction, $Y\in \K[p_1,\ldots,p_n]=\K[\ell_1,\ldots,\ell_n]=\K[z_1,\ldots,z_n]$,
which implies that $Z\otimes \exp(Y)$ is an element of the completion of $\K[z_1,z_2,\ldots]\otimes \K[z_1,\ldots,z_n]$.
Hence, for any $n\geq 1$, $\Delta(z_n)\in \K[z_1,\ldots,z_n]^{\otimes 2}$,
and $(z_n)_{n\geq 1}\in \seq$.

Let us denote by $\varpi$ the canonical projection from $H_{CK}$ to the space of trees.
We obtain:
\begin{align*}
(\varpi\otimes \varpi)\circ \Delta(Z)&=\sum_{i,j\geq 1}\lambda_{i,j} z_i\otimes z_j\\
&=(\varpi\otimes \varpi)\left(1\otimes Z+Z\otimes \exp(Y)\right)\\
&=Z\otimes (z_1+\ldots+z_{n-1}+bz_n).
\end{align*}
Identifying, we obtain the formula for $\lambda_{i,j}$. \end{proof}

\begin{prop}\label{prop 0th order lambdas}
Let $(\lambda_{i,j})_{i,j\geq 1}\in \Lambda$ such that for any $j\geq 1$, the sequence $(\lambda_{i,j})_{i\geq 1}$ is constant. 
Up to a rescaling, we assume that $\lambda_{1,1}=1$. Then:
\begin{itemize}
\item either $\lambda_{i,j}=1$ for any $i,j\geq 1$,
\item or there exists $n\geq 1$ and $b\in \K\setminus\{1\}$ such that for any $i,j\geq 1$:
\[\lambda_{i,j}=\begin{cases}
1\mbox{ if }i<n,\\
b\mbox{ if }i=n,\\
0\mbox{ otherwise}.
\end{cases}\]
\end{itemize}\end{prop}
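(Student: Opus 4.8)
The plan is to use the hypothesis to collapse the double sequence to a single one, substitute into the pre-Lie relation, and read off the admissible patterns. Since $(\lambda_{i,j})_{i\ge1}$ is constant in $i$ for each fixed $j$, I would write $\lambda_{i,j}=\mu_j$ for a single sequence $(\mu_j)_{j\ge1}$, so the normalization reads $\mu_1=1$ and the whole problem is about classifying $(\mu_j)_{j\ge1}$. Substituting $\lambda_{a,b}=\mu_b$ into the pre-Lie relation \eqref{eq::prelie_relation} simplifies it drastically: the three factors with second index $k$ (namely $\lambda_{i+j,k},\lambda_{j,k},\lambda_{i,k}$) all become $\mu_k$, the three with second index $j$ (namely $\lambda_{i,j},\lambda_{i+k,j},\lambda_{k,j}$) all become $\mu_j$, and both $\lambda_{i,j+k}$ terms become $\mu_{j+k}$. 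After cancelling the common term $\mu_j\mu_k$, the relation $PL(i,j,k)$ reduces to the single constraint
\[
(\mu_j-\mu_k)\,\mu_{j+k}=0\qquad\text{for all }j,k\ge1 .
\]
This computation is routine and is the technical heart of the argument; everything afterward is a structural consequence.

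The key observation I would extract is that $\mu_m\ne0$ forces $\mu_{m-1}=1$ whenever $m\ge2$. Indeed, taking $j=1$, $k=m-1$ gives $(\mu_1-\mu_{m-1})\mu_m=0$, and since $\mu_m\ne0$ and $\mu_1=1$ this yields $\mu_{m-1}=1$, and in particular $\mu_{m-1}\ne0$. Consequently the support $P=\{\,j:\mu_j\ne0\,\}$ is downward closed: if $m\in P$ with $m\ge2$ then $m-1\in P$, so by induction $\{1,\dots,m\}\subseteq P$. Hence $P$ is an initial segment, either all of $\mathbb{Z}_{\ge1}$ or $\{1,\dots,N\}$ for some finite $N$, and at every index strictly below the top of $P$ the observation pins the value to $1$.

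From here the dichotomy is immediate. If $P=\mathbb{Z}_{\ge1}$, then $\mu_{m-1}=1$ for every $m\ge2$, so $\mu_j=1$ for all $j$, which is the first alternative. If $P=\{1,\dots,N\}$ is finite, then $\mu_1=\dots=\mu_{N-1}=1$, $\mu_N=:b\ne0$, and $\mu_j=0$ for $j>N$; setting $n=N$ gives exactly the second alternative whenever $b\ne1$, while if $b=1$ the same sequence is rewritten with $n=N+1$ and $b=0$ (still in $\K\setminus\{1\}$), so in every finite case the pattern has the stated form with $b\in\K\setminus\{1\}$. Finally, I would remark that the non-degeneracy condition (property 2 of $\Lambda$) needs no separate check, since $\mu_1=1\ne0$ supplies the nonzero entry $\lambda_{n-1,1}$ with $(n-1)+1=n$ for every $n\ge2$. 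I do not expect a genuine obstacle; the only point demanding care is the bookkeeping between the threshold $n$ and the value $b$, namely absorbing the degenerate possibility $b=1$ into a shift of $n$ so that the final normalization $b\ne1$ holds.
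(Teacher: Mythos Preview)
Your proof is correct and follows essentially the same approach as the paper: both reduce the pre-Lie relation to $(\mu_j-\mu_k)\mu_{j+k}=0$ and specialize one index to $1$ to force the initial-segment structure. The only cosmetic difference is that the paper organizes the casework around the minimal $n$ with $\mu_n\ne1$ (so $b\ne1$ is automatic), whereas you organize it around the top of the support and then absorb the degenerate $b=1$ case by shifting $n$; both reach the same conclusion.
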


\begin{proof}
Let us consider the sequence $(a_j)_{j\geq 1}$ such that for any $i,j\geq 1$, $\lambda_{i,j}=a_j$.
The pre-Lie relation then gives, for any $i,j,k\geq 1$,
\[a_ja_k-a_ka_{j+k}=a_ka_j-a_ja_{j+k},\]
so $a_{j+k}(a_j-a_k)=0$. For $k=1$, as $a_1=1$, we obtain that for any $j\geq 1$,
either $a_j=1$ or $a_{j+1}=0$.

Let us assume that there exists $i\geq 1$, such that $a_i\neq 1$.  
Let us consider the minimal $n$ such that $a_n \geq 1$, and let us put $a_n=b$.
An easy induction proves that $a_m=0$ for any $m>n$. \end{proof}

\begin{theo}\label{thm 0th order}
The strong $0$th order sequences are the ladders and the $Z_n(b)$ of Proposition~\ref{prop z in seq}
\end{theo}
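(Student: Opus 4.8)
The plan is to prove the theorem by combining the array characterization of strong sequences with the classification of constant-diagonal $\Lambda$-arrays in Proposition~\ref{prop 0th order lambdas}, transporting everything through the bijection $\theta:\seq\to\Lambda$ of Theorem~\ref{theo1}. First I would unwind the definition. By the proposition characterizing strong $k$th order sequences via their $\Lambda$-arrays, a sequence $s\in\seq$ is strong $0$th order precisely when $\theta(s)=(\lambda_{i,j})$ has every leftward diagonal of order at most $0$ and its leftmost diagonal of order exactly $0$. Since the $j$th leftward diagonal is the sequence $(\lambda_{i,j})_{i\geq 1}$, the condition ``order at most $0$'' says each such sequence is constant in $i$, say $\lambda_{i,j}=a_j$, while the leftmost case $j=1$ being of order exactly $0$ records only that $a_1\neq 0$, which is already forced by the non-degeneracy axiom of $\Lambda$ at $n=2$ (there $\lambda_{1,1}\neq 0$). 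Thus strong $0$th order sequences correspond under $\theta$ exactly to the $\Lambda$-arrays whose entries are independent of the first index, which is precisely the hypothesis of Proposition~\ref{prop 0th order lambdas}.

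Next I would invoke Proposition~\ref{prop 0th order lambdas}. After the geometric rescaling $t_n\mapsto\gamma^{\,n-1}t_n$ (which multiplies every $a_j$ by $\gamma$, keeps $t_1=\bullet$ fixed, and is the only rescaling preserving the constant-in-$i$ shape) we may assume $\lambda_{1,1}=1$, and then the array is either identically $1$, or of the truncated form carrying a single exceptional value $b\neq 1$ at position $n$ and zeros beyond. It then remains only to name the corresponding sequences: Example~\ref{eg ladder 4} computed that the all-ones array is the $\Lambda$-array of the ladders, and Proposition~\ref{prop z in seq} computed that the truncated array is the $\Lambda$-array of $Z_n(b)$. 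Because $\theta$ is a bijection, these are the unique preimages, which gives the forward inclusion. The converse is immediate from the same two computations: the ladders and each $Z_n(b)$ have constant leftward diagonals with nonzero leftmost diagonal, so they are strong $0$th order.

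The only real care-point, and the main obstacle, is the bookkeeping at the boundary between the two families. Proposition~\ref{prop 0th order lambdas} permits the exceptional value $b=0$, whereas Proposition~\ref{prop z in seq} constructs $Z_n(b)$ only for $b\neq 0$, so I would verify that this apparent gap is illusory. Concretely, the array whose exceptional value is $0$ at position $n$ coincides with the array of $Z_{n-1}(1)$, and symmetrically $Z_n(1)$ realizes the classification array that has a $0$ at position $n+1$; once this small dictionary is checked, every constant-diagonal array is accounted for. Together with the observation that distinct pairs $(n,b)$ with $b\neq 0$ yield distinct arrays (the largest index with nonzero diagonal recovers $n$, and its value recovers $b$), this shows the ladders and the $Z_n(b)$ realize every strong $0$th order array exactly once, up to the rescaling normalization, which completes the proof.
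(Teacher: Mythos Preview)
Your proposal is correct and follows essentially the same approach as the paper: combine Proposition~\ref{prop 0th order lambdas} with Proposition~\ref{prop z in seq} via the bijection $\theta$ of Theorem~\ref{theo1}. The paper's proof is a one-liner that simply points to these two propositions together with the computation that ladders have all $\lambda_{i,j}=1$; your write-up unpacks the same argument with more care, and in particular your observation that the $b=0$ case of Proposition~\ref{prop 0th order lambdas} at level $n$ is realized by $Z_{n-1}(1)$ fills a small bookkeeping gap that the paper's proof leaves implicit.
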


\begin{proof}
The ladders have all $\lambda_{i,j}=1$, so Propositions~\ref{prop z in seq} and \ref{prop 0th order lambdas} imply the theorem.
\end{proof}

Note that the set of $0$th order sequences consists of the ladders as noted explicitly in the proposition and elements of $\seq$ corresponding to the $Z_n(b)$ which interpolate between ladders and corollas. For example, the element of $\seq$ corresponding to $Z_0$ is exactly the sequence of $0$th-order corollas and the the sequence corresponding to $Z_n(b)$ consists of sums of trees which are corollas whose leaves have been replaced by ladders having at most $n + 1$ nodes. See Table~\ref{tab::0th_order_ex}.

Note also that by Remark~\ref{rem homog} the ladders have a homogeneous renormalization group equation for each choice of Feynman rules, but the $Z_n(b)$ can never have this homogeneity.  To see this, we take $t_0=1$ for the ladders, giving a larger array of all $1$s, while for the $Z_n(b)$ from the $j=1$ diagonal we must have $t_0=1$ but then for $j>n$ the diagonal sequence is $1,0,0,\ldots$ which is not $0$th order.

\begin{table}
\setlength{\tabcolsep}{4pt}

\begin{tabular}{|c|c|c|}
\hline
     \setlength{\tabcolsep}{2pt}
\begin{tabular}{ccccccccccccccc}
 & & & & & & & 1 & & & & & & &  \\ 
 & & & & & & 1 & & 1 & & & & & & \\  
 & & & & & 1 & & 1 & & $b$ & & & & & \\
 & & & & 1 & & 1 & & $b$ & & 0 & & & & \\
 & & & 1 & & 1 & & $b$ & & 0 & & 0 & & & \\
 & & 1 & & 1 & & $b$& & 0 & & 0 & & 0 & & \\
 & 1 & & 1 & &$b$ & & 0 & & 0 & & 0 & & 0 &\\
 1 & & 1 & & $b$ & & 0 & & 0 & & 0 & & 0 & & 0
\end{tabular}&   

$\begin{gathered}t_1 = \tun \\
  t_2 = \tdeux \\
  t_3 = \ttroisdeux \\
  t_4 = b\tquatrecinq - (b - 1)\tquatredeux + \frac{1}{3}(b - 1)\tquatreun\\
  t_5 = b\tcinqhuit + \frac{1}{2}\tcinqcinq - b\tcinqdeux + \frac{1}{3}(b - \frac{1}{4})\tcinqun
  \end{gathered}$ & $Z_2(b) = B^+(\exp(P_1 + P_2 + bP_3))$\\
 \hline 
\end{tabular}
      
    \caption{An example for $n=2$ and arbitrary $b$ of a $\Lambda$-array (left), its  corresponding sequence (middle), and its corresponding equation (right) from Proposition \ref{prop 0th order lambdas} and Theorem \ref{thm 0th order}.}
    \label{tab::0th_order_ex}
\end{table}

\subsection{Strong $1$st order  sequences}
\label{subsect::strong_1st_order_seq}

Next, we classify all strong  first-order sequences with the following theorem.

\begin{theo}
\label{thm::classification_1st_order_strong}
Let $(\lambda_{i,j} )_{i,j \geq 1} \in \Lambda$ be the pre-Lie sequence corresponding to a strong first-order element of $\seq$, and let $a_1 \in \K \setminus  \{0\}, a_2, b \in \K$. Then $(\lambda_{i,j} )_{i,j \geq 1} $ is one of the following:
\begin{itemize}
\item \underline{Case A:}
$$\lambda_{i,j} = \begin{cases}
        a_1i + b & \text{if $j = 1$}\\
        a_2i + b & \text{if $j = 2$}\\
        \dfrac{a_1 a_2i}{(j-1)a_1 - (j-2)a_2 } + b & \text{if $j \geq 3$} 
\end{cases}$$
\item \underline{Case B:}
$$\lambda_{i,j} = \begin{cases}
        a_2i - 2a_2 & \text{if $j = 2$}\\
        a_1i - 2a_1 & \text{otherwise}
\end{cases}$$
\item \underline{Case C:}
$$\lambda_{i,j} = \begin{cases}
        a_1i + 2a_1 & \text{if $j = 1$}\\
        a_2i + 4a_2 & \text{if $j = 2$}\\
        \dfrac{a_1 a_2(2j + i)}{\frac{(j-1)(j)(j+1)}{6}a_1 - \frac{(j-2)(j)(j+2)}{3}a_2 }  & \text{if $j \geq 3$} 
\end{cases}$$
\item \underline{Case D:}
$$\lambda_{i,j} = \begin{cases}
        a_1i + a_1 & \text{if $j = 1$}\\
        a_2i + 2a_2 & \text{if $j = 2$}\\
        \dfrac{a_1 a_2(i+j)}{\frac{(j-1)(j)}{2}a_1 - (j-2)(j)a_2 } & \text{if $j \geq 3$} 
\end{cases}$$
\item \underline{Case E:} $$\lambda_{i,j} = 
	\begin{cases}  a_1i + b  & \text{if $j = 1$} \\
				 0 & \text{otherwise}
	\end{cases}$$
\end{itemize}
\end{theo}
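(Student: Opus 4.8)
The plan is to translate the hypothesis into the affine form of the $\Lambda$-array and then solve the pre-Lie relation \eqref{eq::prelie_relation} directly. By the characterization of strong sequences in terms of $\Lambda$-arrays, saying that $s$ is strong first order is exactly saying that every leftward diagonal $(\lambda_{i,j})_{i\ge1}$ is affine in $i$ and the leftmost one is of degree exactly $1$; so I may write $\lambda_{i,j}=a_ji+b_j$ with $a_j,b_j\in\K$ and $a_1\ne0$, and the whole task becomes to find all such arrays lying in $\Lambda$. First I would substitute this ansatz into $PL(i,j,k)$. Since $\lambda_{i,\cdot}$ is linear in its first index the $i^2$ terms cancel, and matching the coefficient of $i$ and the constant term gives, for all $j,k\ge1$, the two scalar relations $a_ja_k(j-k)=\mu_{j,k}a_{j+k}$ (call it (I)) and $ja_kb_j-ka_jb_k=\mu_{j,k}b_{j+k}$ (call it (II)), where $\mu_{j,k}:=\lambda_{j,k}-\lambda_{k,j}=ja_k+b_k-ka_j-b_j$. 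These two families are equivalent to the full pre-Lie relation under the affine ansatz.

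The next step is the dichotomy $a_2=0$ versus $a_2\ne0$. Using (I) with $k=2$ one checks by induction that $a_2\ne0$ forces $a_j\ne0$ for every $j$, whereas $a_2=0$ propagates through (I) and (II) to $\lambda_{i,j}=0$ for all $j\ge2$; together with the surviving diagonal $\lambda_{i,1}=a_1i+b$ and the non-degeneracy condition of Theorem \ref{theo1} this is exactly Case E. So the substance is the branch $a_2\ne0$, where all slopes are nonzero. Here I would set $\beta_j:=b_j/a_j$ and $u_j:=1/a_j$. Substituting (I) into (II) and dividing by $a_ja_k$ collapses (II) to the clean linear identity $j\beta_j-k\beta_k=(j-k)\beta_{j+k}$; its $k=1$ instance is a recurrence determining $\beta_{\ge3}$ from $\beta_1,\beta_2$, so the solution space is at most two-dimensional, and since the affine sequences $\beta_j=p+qj$ already give a two-dimensional family of solutions these are all of them. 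Hence $b_j=(p+qj)a_j$ for two scalars $p,q$ fixed by the initial data. Feeding this back, relation (I) becomes \emph{linear} in the $u_j$, namely $(j-k)u_{j+k}=(j+p+qk)u_j-(k+p+qj)u_k$ (call it (I$'$)).

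It remains to solve (I$'$). Using $k=1$ I would express $u_{j+1}$ in terms of $u_1,u_2$ and $(p,q)$; the first genuine over-determination occurs at level $j+k=5$, where computing $u_5$ both as $(j,k)=(4,1)$ and as $(3,2)$ yields a single relation $\alpha(p,q)\,u_1+\gamma(p,q)\,u_2=0$, linear in $u_1,u_2$ with coefficients polynomial in $(p,q)$. For generic $(p,q)$ this relation is nonvacuous: it confines $(u_1,u_2)$ to a line, forcing $u_j$ to be affine in $j$, equivalently $b_j\equiv b$ constant, and reading off the closed form gives $a_j=\tfrac{a_1a_2}{(j-1)a_1-(j-2)a_2}$, which is Case A (including the constant-slope Dyson--Schwinger arrays when $a_1=a_2$). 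The relation becomes vacuous, i.e. $\alpha(p,q)=\gamma(p,q)=0$, only at the resonant points $(p,q)=(-2,0),(0,1),(0,2)$; there $u_1,u_2$ remain free and solving (I$'$) in closed form produces the $u_2$-decoupled family (Case B, $b_j=-2a_j$), the degree-$2$ denominator (Case D, $b_j=ja_j$) and the degree-$3$ denominator (Case C, $b_j=2ja_j$) respectively. A final routine verification confirms that each of the five displayed families satisfies (I) and (II) for all $j,k$ and the non-degeneracy condition, hence lies in $\Lambda$ and corresponds via Theorem \ref{theo1} to a genuine strong first-order sequence.

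The main obstacle is pinning down the resonance set precisely: showing that $\alpha(p,q)=\gamma(p,q)=0$ has \emph{exactly} the three solutions above, and in particular that for every integer $q\ge3$ --- where a degree-$(q+1)$ polynomial solution of (I$'$) threatens to appear --- the level-$5$ constraint is in fact nonvacuous and collapses the family back onto the Case A line (a direct computation already shows, for instance, that $(0,3)$ forces $u_2=2u_1$). This finiteness is what makes the list complete and mutually exhaustive. By comparison, the remaining ingredients --- the $a_2=0$ degeneration, the linearisations of (I) and (II), and the closed-form solution of the linear recurrence (I$'$) yielding the explicit rational expressions for $j\ge3$ --- are mechanical.
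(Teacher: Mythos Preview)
Your approach is genuinely different from the paper's and in several respects cleaner. The paper works directly with $a_k,b_k$: it first proves the technical lemma $\lambda_{m,1}\ne\lambda_{1,m}$, uses $PL(i,1,k)$ to set up coupled recursions for $a_{k+1},b_{k+1}$, computes explicitly through $k=7$, and then imposes \emph{two} extra compatibility relations, $a_5=a_5'$ (from $PL(1,2,3)$) and $a_7=a_7'$ (from $PL(1,2,5)$); after writing $a_2=ca_1$ these combine into a factor $(b_1-b_2)$ times $(2a_1+b_1)(2a_1-b_1)(a_1+b_1)(a_1-b_1)c$, whose roots give the five cases. Your substitution $\beta_j=b_j/a_j$, $u_j=1/a_j$ is a genuine simplification: it decouples (II) into the linear identity $j\beta_j-k\beta_k=(j-k)\beta_{j+k}$, whose solutions $\beta_j=p+qj$ are immediate, and reduces (I) to the single linear recurrence (I$'$) in the $u_j$. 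This replaces the paper's ad hoc technical lemma by a transparent change of variables.

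However, the resonance step contains a real gap. You assert that $\alpha(p,q)=\gamma(p,q)=0$ (the vanishing of the level-$5$ obstruction) happens \emph{exactly} at $(-2,0),(0,1),(0,2)$. It does not: one computes
\[
\gamma(p,q)=-\tfrac12(p+q)\bigl[(p+q)^2+3(p-q)+2\bigr],
\]
and $\alpha$ vanishes identically on the conic $(p+q)^2+3(p-q)+2=0$. Your three points lie on this conic, but so do $(-1,0)$, $(-\tfrac13,\tfrac13)$, $(-1,5)$, and infinitely many others; the common zero locus of $\alpha,\gamma$ is this whole conic together with the isolated point $(0,0)$. At the spurious points higher-level constraints intervene: for instance at $(p,q)=(-\tfrac13,\tfrac13)$ levels $5$ and $6$ impose nothing, but equating the values of $u_7$ coming from $(j,k)=(6,1)$ and $(4,3)$ forces $u_1=0$, contradicting $a_1\ne0$. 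This is exactly why the paper needs \emph{both} the level-$5$ and the level-$7$ relations. To repair your argument you must adjoin at least one further obstruction (the level-$7$ analogue of $\alpha,\gamma$), show its simultaneous vanishing cuts the conic down to the finite set $\{(-2,0),(-1,0),(0,0),(0,1),(0,2)\}$, and then dispose of the two extras: $(0,0)$ still yields affine $u_j$ and hence Case~A with $b=0$, while $(-1,0)$ gives $\lambda_{1,1}=a_1+b_1=0$ and is excluded by non-degeneracy. A smaller point: the claim that $a_2=0$ ``propagates through (I) and (II) to $\lambda_{i,j}=0$ for $j\ge2$'' also needs more than (I) and (II) with $k=1$, since those alone leave $b_2$ unconstrained; the paper is equally brief here, but you should expect to use an additional relation.
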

Note that the cases are not disjoint; in particular edge cases of one case may also appear in another case.

We remark in particular that this result encompasses strong first-order sequences that were already known. For example, Case E is that of first-order corollas, while setting $a_2 = a_1$ in Case A gives the sequences coming from Dyson-Schwinger type equations as in Example~\ref{eg::dyson_schwinger_example}  \cite{FoissyDyson}. Another special family arising from Case A will be discussed in Example \ref{ex::ladders_with_added_leaves}. 

Note also that by Remark~\ref{rem homog}, the corresponding generalized renormalization group equations are homogeneous in case $A$ when $b\neq 0$ using $t_0=1/b$ and cannot be homogeneous in case $A$ when $b=0$.  For cases $B$, $C$, and $D$, to have homogeneous generalized renormalization group equations the constant terms in $i$ for each value of $j$ must be the same, but for all three cases, the equality of the constant terms brings them to a special case of $A$.  Specifically, $B$ is homogeneous if $a_1=a_2$ which is the same as $A$ with $a_1=a_2$ and $b=-2a_2$, $C$ is homogeneous if $a_1=2a_2$ which is the same as $A$ with $b=2a_1$ and $a_1=2a_2$, and $D$ is homogeneous if $a_1=2a_2$ which is the same as $A$ with $b=a_1$ and $a_1=2a_2$.  Case $E$ can never be homogeneous as the $j=1$ diagonal gives $t_0=1/b$ and so requires $b\neq 0$ but this gives the sequence $b,0,0,0, \ldots$ for $j>1$ which is not of any order.

The proof of Theorem \ref{thm::classification_1st_order_strong} is straightforward, but requires the following technical lemma:

\begin{lemma}
\label{lem::technical_lemma_1}
If $(\lam_{i,j})_{i,j \geq 1}$ is a strong first-order sequence, then $\lam_{m,1} \neq \lam_{1,m}$ for any $m \geq 2$.
\end{lemma}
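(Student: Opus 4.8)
The plan is to argue by contradiction. Fix $m \geq 2$, assume $\lambda_{m,1} = \lambda_{1,m}$, and derive a violation of the non-degeneracy condition (property 2 of Theorem~\ref{theo1}). Since the sequence is strong first order, every leftward diagonal of the $\Lambda$-array is affine in its running index, so I would write $\lambda_{i,j} = a_j i + b_j$ with $a_1 \neq 0$ (the leftmost diagonal has order exactly $1$). Abbreviating $d_j := \lambda_{j,1} - \lambda_{1,j} = a_1 j + b_1 - a_j - b_j$, the hypothesis reads $d_m = 0$.

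The engine is the pre-Lie relation $PL(i,j,1)$. Rewriting \eqref{eq::prelie_relation} as $\lambda_{i,j}\lambda_{i+j,1} - \lambda_{i,1}\lambda_{i+1,j} = d_j\,\lambda_{i,j+1}$ and substituting the affine forms turns it into a polynomial identity in $i$; comparing the coefficient of $i$ with the constant term yields the two scalar relations
\begin{align*}
a_1 a_j (j-1) &= d_j\, a_{j+1}, & a_1 j\, b_j - a_j b_1 &= d_j\, b_{j+1}.
\end{align*}
I would first take $j=m$: since $d_m=0$ and $a_1(m-1)\neq 0$, the first relation forces $a_m=0$, and the second then forces $b_m=0$; hence the whole $m$-th diagonal vanishes and $\lambda_{m,1}=\lambda_{1,m}=a_m+b_m=0$. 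Feeding $a_m=0$ back into the first relation with $j=m-1,m-2,\dots,2$ (each step using that the next $a$ already vanishes and that the factor $j-1\neq 0$) cascades down to $a_j=0$ for all $2\le j\le m$.

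It remains to kill the constants $b_j$ for $2 \le j \le m$. Here I would switch to the specialization $PL(i,1,p)$: when the $(p+1)$-st diagonal is already known to be identically zero, the right-hand side collapses and the relation reduces to $\lambda_{i,1}\,\lambda_{i+1,p} = \lambda_{i,p}\,\lambda_{i+p,1}$. Using $a_p=0$ (so $\lambda_{\cdot,p}=b_p$ is constant) together with $\lambda_{i,1}-\lambda_{i+p,1} = -a_1 p$, this becomes $-a_1 p\, b_p = 0$, whence $b_p = 0$. Running this as a downward induction from $p=m-1$ to $p=2$, with base case $b_m=0$ already in hand, shows that all of the diagonals $2,\dots,m$ are identically zero. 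Combined with $\lambda_{m,1}=0$, every $\lambda_{i,j}$ with $i+j=m+1$ then vanishes, contradicting non-degeneracy at degree $m+1$.

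The main obstacle is organizing the two interlocking cascades: the linear-in-$i$ part of the pre-Lie relation propagates the vanishing of the $a_j$, but the constants $b_j$ are invisible to that leading coefficient and require the second, different specialization of $PL$, which only becomes usable once an adjacent diagonal is known to vanish entirely. Care is also needed at the boundary $j=1$, where the factor $(j-1)$ degenerates and correctly prevents the cascade from forcing $a_1=0$.
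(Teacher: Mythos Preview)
Your argument is correct, and the overall contradiction-via-non-degeneracy strategy matches the paper's. The organization, however, is genuinely cleaner than the paper's. The paper uses $PL(i,1,m)$ and then splits into cases according to whether $b_1=0$: in Case~1 it derives $a_m=ma_1$ and contradicts $a_1\neq 0$ via $PL(1,1,m)$, while in Case~2 it obtains $a_m=b_m=0$ and then runs a cascade on the relations $PL(r,1,m-r)$ to kill each $\lambda_{r+1,m-r}$ individually. Your use of $PL(i,j,1)$ as a polynomial identity in $i$, extracting the linear and constant coefficients to get the pair $a_1a_j(j-1)=d_ja_{j+1}$ and $a_1jb_j-a_jb_1=d_jb_{j+1}$, immediately gives $a_m=b_m=0$ without any case split, and then separates the cascade into two clean passes (first all $a_j$, then all $b_j$). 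What this buys you is uniformity: the same mechanism handles every value of $b_1$, and the two-pass structure makes explicit why the constants $b_j$ require the second specialization $PL(i,1,p)$ once the adjacent diagonal is known to vanish. The paper's route is slightly shorter in Case~2 (it does not first zero out all the $a_j$), but at the cost of the case distinction.
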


\begin{proof}
Let $(\lam_{i,j})_{i,j \geq 1}$  be a strong first-order sequence and assume towards a contradiction that there is an integer $m \geq 2$ such that $\lam_{m,1} = \lam_{1,m}$.  If this is the case, then for any $i,j \geq 1$, $PL(i,j,m)$ gives us that:
$$\lam_{i,1}\lam_{i+1, m}  = \lam_{i,m}\lam_{i + m, 1}$$
But since $(\lambda_{i,j} )_{i,j \geq 1}$ is a strong first-order sequence, it follows that $\lam_{i,j} = a_ji + b_j$ for some $a_j, b_j \in \K$, and so making this substitution we find:
\begin{align*}
(a_1i + b_1)(a_m(i+1) + b_m) = (a_mi + b_m)(a_1(i+m) + b_1)
\end{align*}
Distributing terms and then grouping by powers of $i$ we get:
$$(2a_mb_1 + 2a_1b_m)i + (a_1b_mm + a_mb_1) = 0$$
which gives the simultaneous conditions:
\begin{equation}
\label{eq::techinical_lemma_eq1}
a_mb_1 + a_1b_m = 0 
\end{equation}
and
\begin{equation}
\label{eq::techinical_lemma_eq2}
a_1b_mm + a_mb_1 = 0
\end{equation}
\underline{Case 1:} $b_1 = 0$. Since $(\lam_{i,j})_{i,j \geq 1}$ is first-order strong by hypothesis, $a_1 \neq 0$ and so equation (\ref{eq::techinical_lemma_eq1}) implies immediately that $b_m = 0$. Hence:
\begin{align*}
&\lam_{m,1} = \lam_{1,m} \\
\implies &ma_1 + b_1 = a_m + b+m \\
\implies &ma_1 = a_m
\end{align*}
However if we substitute $b_1 = b_m = 0$ and $ma_1 = a_m$ into $PL(1,1,m)$, we obtain: 
\begin{align*}
2ma_1^2 &= m(m+1)a_1^2\\
\implies a_1^2(1 - m) &= 0 
\end{align*}
which implies $a_1 = 0$, since $m \geq 2$. This is a contradiction.

\underline{Case 2:} If $b_1 \neq 0$, on the other hand, we can use equation (\ref{eq::techinical_lemma_eq2}) to solve for $a_m$:
$$a_m = \frac{-ma_1b_m}{b_1}$$
Substituting into equation \eqref{eq::techinical_lemma_eq1} gives:
\begin{align*}
\left(\frac{-ma_1b_m}{b_1} \right)b_1 + a_1b_m &= 0 \\
\implies a_1b_m(m - 1) &= 0
\end{align*}
And so $b_m = 0$. Further, substituting $b_m = 0$ into equation (\ref{eq::techinical_lemma_eq1}) gives that $a_m = 0$ as well. Taken together, this means that $\lam_{1,m} = 0$, and so by hypothesis $\lam_{m,1} = 0$.

Now consider the relation $PL(1,1,k-1)$:
$$\lam_{1,1}\lam_{2, m-1} - \lam_{1,m-1}\lam_{1,m} = \lam_{1,m-1}\lam_{m,1}  -  \lam_{m-1, 1}\lam_{1,m}$$
Since $\lam_{1,m} = \lam_{m,1}  = 0$, only the leftmost term does not vanish, leaving:
$$\lam_{1,1}\lam_{2, m-1}  =  0$$
Hence $\lam_{2, m-1}  =  0$. We then consider $PL(2,1, m-2)$, which after substituting in all calculated values gives:
$$\lam_{2,1}\lam_{3,m-2} = 0$$
In particular, since $\lam_{i,1}$ is linear in $i$ and $\lam_{m,1} = 0 $, then $\lam_{t,1} \neq 0$ for any $t \neq k$. Hence $\lam_{3, m-2} = 0$.

Proceeding inductively in this way, the relations $PL(4, 1, m-3), PL(5,1,m-4), \ldots , PL(m-2,1,2)$ imply that $\lam_{k_1, k_2} = 0$ for any $k_1, k_2$ such that $k_1 + k_2  = m+1$. Hence $(\lam_{i,j})_{i,j} \not \in \Lambda$ after all, since $(\lam_{i,j})_{i,j} $ does not satisfy the non-degeneracy condition of Theorem \ref{theo1}. This is a contradiction. 
\end{proof}

We can now proceed with the following proof of Theorem \ref{thm::classification_1st_order_strong}.

\begin{proof}[Proof of Theorem \ref{thm::classification_1st_order_strong}]
Let $(\lam_{i,j})_{i,j \geq 1} := (a_ji + b_j)_{i,j \geq 1}$ be a strong first-order sequence and consider the relations $PL(i,1,k)$ for arbitrary $i,k \geq 1$. These have the form:
$$\lam_{i,1}\lam_{i+1,k} - \lam_{1,k}\lam_{i,k+1} = \lam_{i,k}\lam_{i+k,1} - \lam_{k,1}\lam_{i, k+1}$$ 
Now since $\lam_{m,1} \neq \lam_{1,m}$ for any $m \geq 2$ by Lemma \ref{lem::technical_lemma_1} we can solve for $\lam_{i, k+1}$:
\begin{align*}
\lam_{i, k+1} &= \frac{\lam_{i,1}\lam_{i+1, k} - \lam_{i,k}\lam_{i+k, 1}}{\lam_{1,k} - \lam_{k,1}} \\
\implies a_{k+1}i + b_{k+1} &= \frac{a_1a_k(1 - k)i - a_1b_kk + a_kb_1}{(a_k + b_k) - (a_1k + b_1)} , \hspace{1cm} k \geq 2
\end{align*}
Hence for $k \geq 2$:
\begin{equation}
\label{eq::tech_lemma_ak}
a_{k+1} = \frac{a_1a_k(1 - k)}{(a_k + b_k) - (a_1k + b_1)}
\end{equation}
and 
\begin{equation}
\label{eq::tech_lemma_bk}
b_{k+1} = \frac{- a_1b_kk + a_kb_1}{(a_k + b_k) - (a_1k + b_1)}
\end{equation}
We will find a system of equations that constrain these variables even further. Using equations (\ref{eq::tech_lemma_ak}) and (\ref{eq::tech_lemma_bk}) we compute:
\begin{align*}
a_3 = \frac{-a_1a_2}{a_2+b_2-2a_1-b_1}\\
b_3 = \frac{b_1a_2-2b_2a_1}{a_2+b_2-2a_1-b_1}\\
\end{align*}
and corresponding explicit forms for $a_4, b_4, a_5, b_5, a_6, b_6, a_7$.
But these are not the only relations that exist among the variables. Using $PL(1,2,3), PL(1,2,4)$, and $PL(1,2,5)$ respectively, we calculate the following three values in a second way:
\begin{align*}
a_5' = \frac{-a_2a_3}{2a_3+b_3-3a_2-b_2}\\
a_6' = \frac{-2a_2a_4}{2a_4+b_4-4a_2-b_2}\\
a_7' = \frac{-3a_2a_5}{2a_5+b_5-5a_2-b_2} \\
\end{align*}
We may assume that the denominators in each of $a_5', a_6', a_7'$ cannot be $0$ for the following reason. Suppose that $2a_3+b_3-3a_2-b_2 = 0$. Then the corresponding relation $PL(1,2,3)$ would imply that $-a_2a_3 = 0$, so either $a_2 = 0$ or $a_3 = 0$. In either case, substituting the values into the equation for $a_3$ above gives $a_2 = a_3 = 0$. Applying equations (\ref{eq::tech_lemma_ak}) and (\ref{eq::tech_lemma_bk}) inductively then gives that $a_k = 0, b_k = 0$ for all $k \geq 2$, and hence the corresponding sequence is the sequence of first-order corollas (Case E). The same reasoning can also be used to show that $2a_4+b_4-4a_2-b_2 \neq 0$ and  $2a_5+b_5-5a_2-b_2 \neq 0$ unless the array belongs to the Case E family. 

Now setting $a_5 = a_5'$ , $a_7 = a_7'$, making appropriate substitutions, and rearranging gives:
\begin{equation}
-(2a_1^2a_2 + 9a_1a_2b_1 + a_2b_1^2 - 6a_1^2b_2)(b_1 - b_2) = 0
\end{equation}
and
\begin{equation}
-(172a_1^4a_2 + 840a_1^3a_2b_1 + 365a_1^2a_2b_1^2 + 60a_1a_2b_1^3 + 3a_2b_1^4 - 520a_1^4b_2 - 180a_1^3b_1b_2 - 20a_1^2b_1^2b_2)(b1 - b2) = 0
\end{equation} 
The first obvious solution to this system of equations is $b_1 = b_2$. Making this substitution and applying equation (\ref{eq::tech_lemma_bk}) inductively gives us that $b_1 = b_2  = b_k$ for all $k \geq 3$. Setting $b := b_1$ and applying equation (\ref{eq::tech_lemma_ak}) inductively then gives the Case A formula from the theorem statement. 

On the other hand, if $b_1 \neq b_2$, then we may divide both equations by $(b_1 - b_2)$. Moreover, our assumption that the sequence $(\lambda_{i,j})_{i,j \geq 0}$ is first-order means in particular that $a_1 \neq 0$. Hence there exists some field element $c \in \K$ such that $a_2 = ca_1$. Making this substitution into both equations gives us the following system:
\begin{equation}
\label{eq::theorem_proof1}
-(2a_1^2c + 9a_1b_1c + b_1^2c - 6a_1b_2)a_1 = 0
\end{equation}
and 
\begin{equation}
\label{eq::theorem_proof2}
-(172a_1^4c + 840a_1^3b_1c + 365a_1^2b_1^2c + 60a_1b_1^3c + 3b_1^4c - 520a_1^3b_2 - 180a_1^2b_1b_2 - 20a_1b_1^2b_2)a_1 = 0
\end{equation}
After dividing by $a_1$, it is a simple matter to use equation (\ref{eq::theorem_proof1}) to solve for $b_2$:
\begin{equation*}
b_2 = \frac{c}{6a_1}(2a_1^2 + 9a_1b_1 + b_1^2)
\end{equation*}
Substituting this value into equation (\ref{eq::theorem_proof2}) gives us:
\begin{equation}
\frac{1}{3}(4a_1^4c - 5a_1^2b_1^2c + b_1^4c) = 0
\end{equation}
and factoring:
\begin{equation}
\frac{1}{3}(2a_1 + b_1)(2a_1 - b_1)(a_1 + b_1)(a_1 - b_1)c = 0
\end{equation}
If $b_1 = -2a_1 $ then we inductively compute Case B. If $b_1 = 2a_1$ then we inductively compute Case C. If $b_1 = a_1$ then we inductively compute Case D. The solution $b_1 = -a_1$ is extraneous, as it contradicts the non-degeneracy condition of pre-lie arrays. Finally, the solution $c = 0$ inductively gives Case E once again.

All that remains is to verify that each family is indeed a pre-lie array for arbitrary choices of $a_1, a_2,$ and $b$, and that each family is well-defined (namely that the expressions appearing in the denominators of the Case A, C, and D arrays are guaranteed to be nonzero).

The computations to show that each family is pre-lie are straightforward, but quite long. To maintain readability we only include high-level details of the computation here. In particular, after accounting for symmetry of pre-lie relations in the indices $j$ and $k$ there are only four cases ($j = 2, k = 1; j = 1, k \geq 3; j = 2, k \geq 3; j \geq 3, k \geq 3$) for each of the families A, C, and D to prove that arrays in these families are indeed pre-lie, while there are only two cases ($j = 2, k \neq 2; j \neq 2, k \neq 2$) on the indices of arrays in family B to prove the pre-lie condition.  For example when $j = 2, k = 1$  in the Case A family we compute:

\begin{align*}
LHS &= \lambda_{i,j} \lambda_{i+j,k} - \lambda_{j,k}\lambda_{i, j+k} \\
    &= \lambda_{i,2} \lambda_{i+2,1} - \lambda_{2,1}\lambda_{i, 3} \\
    &= \squareparens{a_2i + b}\squareparens{a_1(i+2) + b} - \squareparens{2a_1 + b}\squareparens{\frac{a_1a_2i}{2a_1 - a_2} + b}\\
    &= a_1a_2i^2 + 2a_1a_2i + a_1bi + 2a_1b + a_2bi - \frac{2a_1^2a_2i}{2a_1 - a_2} - 2a_1b - \frac{a_1a_2bi}{2a_1 - a_2}\\
    &= a_1a_2i^2 + a_1bi + a_2bi - \frac{a_1a_2bi}{2a_1 - a_2} - \frac{2a_1^2a_2i - 2a_1a_2i(2a_1 - a_2)}{2a_1 - a_2}\\
    &= a_1a_2i^2 + a_1bi + a_2bi - \frac{a_1a_2bi}{2a_1 - a_2} - \frac{2a_1a_2^2i - 2a_1^2a_2i}{2a_1 - a_2}\\
    &= a_1a_2i^2 + a_1bi + a_2bi - \frac{a_1a_2bi}{2a_1 - a_2} - \frac{a_1a_2^2i - 2a_1^2a_2i + a_1a_2^2i}{2a_1 - a_2}\\
    &= a_1a_2i^2 + a_1bi + a_2bi - \frac{a_1a_2bi}{2a_1 - a_2} - \frac{a_1a_2^2i - a_1a_2i(2a_1 - a_2)}{2a_1 - a_2}\\
    &= a_1a_2i^2 + a_1bi + a_2bi - \frac{a_1a_2bi}{2a_1 - a_2} - \frac{a_1a_2^2i}{2a_1 - a_2} + a_1a_2i + a_2b - a_2b + b^2 - b^2\\
    &= a_1a_2i(i+1) + a_1bi + a_2b(i+1) + b^2 - \frac{a_1a_2^2i}{2a_1 - a_2} - a_2b - \frac{a_1a_2bi}{2a_1 - a_2}  -b^2\\
    &= \squareparens{a_1i + b}\squareparens{a_2(i+1) + b} - \squareparens{a_2 + b}\squareparens{\frac{a_1a_2i}{2a_1 - a_2} + b}\\
    &= \lambda_{i,1} \lambda_{i+1, 2} - \lambda_{1,2}\lambda_{i, 3} \\
    &= \lambda_{i,k} \lambda_{i+k, j} - \lambda_{k,j}\lambda_{i, k+j} \\
    &= RHS
\end{align*}

For the other cases, it is simply a matter of substituting in the formulas for the $\lambda_{i,j}$ into each term of equation \eqref{eq::prelie_relation}, moving all terms to one side of the equation, finding a common denominator, simplifying, and observing that the result is $0$. These steps can be easily performed in a computer algebra system such as Sage.

Finally, we only need to show that the arrays in each family are indeed well-defined. To maintain readability, we will only focus on the details for Case A, which is the content of the next lemma. 
\end{proof}

\begin{lemma}[Case $A$ array  is well-defined]
Let $\Lambda = (\lambda_{i,j})_{i,j \geq 1} := (a_ji + b_j)$ be a strong first order pre-lie array such that $b_1 = b_2$ (which implies $b_j = b_1 =: b$ for all $j$, and gives the Case $A$ family inductively). Then:
\begin{equation}
    (j-1)a_1 - (j-2)a_2 \neq 0 
\end{equation}
for any $j \geq 3$. 
\end{lemma}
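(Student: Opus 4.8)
The plan is to prove the stronger statement by induction on $j$: for every $j \geq 2$ one has $D_j := (j-1)a_1 - (j-2)a_2 \neq 0$ and moreover $a_j = a_1 a_2 / D_j$. Folding the closed form into the induction hypothesis is exactly the point that avoids circularity, since the expression $a_1 a_2 / D_j$ only makes sense once $D_j \neq 0$ is known. The base case is $j = 2$, where $D_2 = a_1$, which is nonzero because the sequence is first order (Theorem~\ref{thm::classification_1st_order_strong} takes $a_1 \in \K \setminus \{0\}$), and indeed $a_1 a_2 / D_2 = a_2$.

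For the inductive step, suppose $D_k \neq 0$ and $a_k = a_1 a_2 / D_k$ for some $k \geq 2$. The hypothesis $b_1 = b_2$ forces $b_j = b$ for all $j$ (apply \eqref{eq::tech_lemma_bk} inductively), so that $\lambda_{1,k} = a_k + b$ and $\lambda_{k,1} = k a_1 + b$, whence
\[
\lambda_{1,k} - \lambda_{k,1} = a_k - k a_1 = \frac{a_1(a_2 - k D_k)}{D_k} = -\frac{a_1(k-1)D_{k+1}}{D_k},
\]
where the last equality is the elementary polynomial identity $a_2 - k D_k = -(k-1)D_{k+1}$ (expand both sides). Now Lemma~\ref{lem::technical_lemma_1} guarantees $\lambda_{k,1} \neq \lambda_{1,k}$, so the left-hand side is nonzero; since $a_1 \neq 0$, $k - 1 \geq 1$, and $D_k \neq 0$ by the induction hypothesis, this forces $D_{k+1} \neq 0$. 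Feeding $a_k - k a_1 = -a_1(k-1)D_{k+1}/D_k$ back into the recursion \eqref{eq::tech_lemma_ak}, namely $a_{k+1} = a_1 a_k(1-k)/(a_k - k a_1)$, and simplifying (using $1 - k = -(k-1)$) yields $a_{k+1} = a_1 a_2 / D_{k+1}$, completing the induction. This establishes $D_j \neq 0$ for all $j \geq 2$, in particular for $j \geq 3$, as claimed.

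The only genuinely delicate point is recognizing that the nonvanishing supplied by Lemma~\ref{lem::technical_lemma_1} is, after the closed-form substitution $a_k = a_1 a_2/D_k$, \emph{exactly} the assertion that $D_{k+1} \neq 0$; once the bridging identity $a_2 - k D_k = -(k-1)D_{k+1}$ is in hand the conclusion is immediate. I expect the main (and modest) obstacle to be bookkeeping this identity correctly and keeping the induction hypothesis strong enough---carrying the closed form for $a_k$ alongside the nonvanishing of $D_k$---so that the recursion \eqref{eq::tech_lemma_ak} is legitimately applicable at each step and no division by zero is ever performed.
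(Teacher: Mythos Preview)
Your proof is correct and takes a genuinely different, cleaner route than the paper's. Both arguments proceed by induction and both carry the closed form $a_k = a_1 a_2/D_k$ as part of the hypothesis, but the mechanisms differ. The paper argues by contradiction at each step: assuming $D_m = 0$, it invokes the pre-Lie relation $PL(1,1,m-1)$ explicitly, substitutes the closed form for $a_{m-1}$ and the relation $a_2 = \tfrac{m-1}{m-2}a_1$, and grinds out $-a_1^2(m-2)(m-1)=0$. You instead observe that, once the closed form for $a_k$ is in hand, the nonvanishing $\lambda_{k,1}\neq\lambda_{1,k}$ already supplied by Lemma~\ref{lem::technical_lemma_1} is \emph{literally} the statement $D_{k+1}\neq 0$, via the one-line identity $a_2 - kD_k = -(k-1)D_{k+1}$. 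This sidesteps any fresh appeal to the pre-Lie relations and makes transparent why the denominator of the recursion~\eqref{eq::tech_lemma_ak} is legitimate at each stage before you use it. The paper's approach is more self-contained (it does not re-invoke Lemma~\ref{lem::technical_lemma_1}); yours is shorter and more conceptual, since it identifies the well-definedness obstruction as a restatement of a lemma already proved.
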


\begin{proof}
\underline{Base Case:} ($j = 3$).

Assume towards a contradiction that $2a_1 - a_2 = 0$. Then in particular:
\begin{equation}
    a_2 = 2a_1
\end{equation}
Consider $PL(1,1,2)$:
\begin{align*}
    &\lambda_{1,1}\lambda_{2,2} - \lambda_{1,2}\lambda_{1,3} = \lambda_{1,2}\lambda_{3,1} - \lambda_{2,1}\lambda_{1,3}\\
    \implies &(a_1 + b)(2a_2 + b) - (a_2 + b)(a_3 + b) = (a_2 + b)(3a_1 + b) - (2a_1 + b)(a_3 + b)\\
    \implies &-a_1a_2 + 2a_1a_3 - a_2a_3 = 0 
\end{align*}
Substituting in $a_2 = 2a_1$ we obtain:
\begin{align*}
    & -2a_1^2 = 0\\
    \implies & a_1 = 0
\end{align*}
contradicting the fact that $a_1 \neq 0$ by virtue of the sequence being first order.

\underline{Inductive Step:} Let $m > 3$ be the least positive integer for which
\begin{align}
   & (m-1)a_1 - (m-2)a_2 = 0\label{eq::lemma_towards_contradiction1}\\
    \implies & \frac{(m-1)}{(m-2)}a_1 = a_2 \label{eq::lemma_towards_contradiction2}
\end{align}

and consider $PL(1,1,m-1)$:
\begin{align}
    &\lambda_{1,1}\lambda_{2,m-1} - \lambda_{1, m-1}\lambda_{1,m} = \lambda_{1,m-1}\lambda_{m,1} - \lambda_{m-1, 1}\lambda_{1,m} \nonumber\\
    \implies & (a_1 + b)(2a_{m-1} + b) - (a_{m-1} + b)(a_m + b) = (a_{m-1} + b)(ma_1 + b) - ((m-1)a_1 + b)(a_m + b) \nonumber\\
    \implies & (m-1)a_1a_m - (m - 2)a_1a_{m-1} - a_ma_{m-1} = 0 \label{eq::equation1}
\end{align}
Now since $m$ is the least positive integer for which equation (\ref{eq::lemma_towards_contradiction1}) holds, we can solve for $a_{m-1}$ inductively as in the formula for the Case $A$ array (this is our inductive hypothesis):
\begin{equation}
    a_{m-1} = \frac{a_1a_2}{(m-2)a_1 - (m-3)a_2}
\end{equation}
Substituting this into (\ref{eq::equation1}) we retrieve:
\begin{equation}
    (m-1)a_1a_m - (m-2)a_1\left[ \frac{a_1a_2}{(m-2)a_1 - (m-3)a_2}\right] - a_m\left[ \frac{a_1a_2}{(m-2)a_1 - (m-3)a_2}\right] = 0
\end{equation}
And finally, substituting the value of $a_2$ from (\ref{eq::lemma_towards_contradiction2}) we get:
\begin{align}
    &(m-1)a_1a_m - (m-2)a_1^2[m-1] - a_ma_1[m-1] = 0 \nonumber\\
    \implies & -a_1^2(m-2)(m-1) = 0 \label{eq::lemma_final_equation}\\
    \implies & a_1 = 0 \hspace{2cm} \text{(Since $m > 2$)} \nonumber
\end{align}
once again contradicting that the sequence is a first-order sequence.
\end{proof}

We remark that proving Cases C and D are well-defined goes through in exactly the same way as Case A, even down to using the same pre-lie relations. In place of equation (\ref{eq::lemma_final_equation}) one arrives at
\begin{equation*}
-a_1^2\frac{(m+1)(2m+1)(m-1)}{2m-1} = 0
\end{equation*}
in the proof of Case C and 
\begin{equation*}
-a_1^2(m+1)(m-1) = 0
\end{equation*}
in the proof of Case D. 

\begin{example}
\label{ex::ladders_with_added_leaves}

One special subcase of the Case A family has a nice combinatorial interpretation.

Consider rooted trees which are ladders with added leaves, that is if all leaves are removed the remaining tree is a ladder.  Weight such a tree $t$ by $\lambda_t$ defined by
\[\lambda_t=\begin{cases}
1\mbox{ if }t=\tun,\\
(a+b)a^{lea(t)-1}b^{depth(t)-1}\mbox{ if $t$ is a ladder with added leaves, different from $\tun$},\\
0\mbox{ otherwise},
\end{cases}\]
where $lea(t)$ is the number of leaves of $t$ and $depth(t)$ is the depth of $t$.
Collecting all such trees with these weights into the sum $X=\sum t_n$, then this sequence can be generated by 
\[X=B^+\left(\frac{1+bX}{1-a\tun}\right).\]
This can be seen by noting that the numerator inside $B_+$ generates the ladder backbone and the $1-a\tun$ denominator gives a geometric series in $a\tun$ generating the additional leaves at the root; the other additional leaves being generated recursively with the ladder backbone in the same way.

The associated $\Lambda$ array for this example is 
\[\lambda_{i,j}=\begin{cases}
ai+b\mbox{ if }j=1,\\
b\mbox{ if }j\geq 2.
\end{cases}\]
This can be seen by direct computation, see Lemma 5.9 of \cite{Dmmath} for details. 

\end{example}

\subsection{Strong $2$nd order and higher}
\label{subsect::2nd_order_seq}

Given the large number of examples of strong first-order sequences given in the last subsection, the results of this section may come as a surprise. The main result of this section is the following:

\begin{theo}[Classification of Strong $\ell$th Order Sequences]
\label{thm::classification_of_strong_kth_order_sequences}
For $\ell \geq 2$, the only family of strong $\ell$th order sequences is the family of scaled corollas. 
\end{theo}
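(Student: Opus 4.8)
The plan is to transport the statement entirely into the language of $\Lambda$-arrays via Theorem~\ref{theo1} and then run a degree argument on the leftward diagonals. By the characterization of strong $k$th order sequences stated just before Remark~\ref{rem homog}, a strong $\ell$th order sequence corresponds to an element $(\lambda_{i,j})_{i,j\geq 1}\in\Lambda$ whose every leftward diagonal is polynomial in its first index: I would write $\lambda_{i,j}=P_j(i)$ with $\deg P_j\leq \ell$ for all $j$, and $\deg P_1=\ell$ exactly. It therefore suffices to prove that $P_j\equiv 0$ for every $j\geq 2$; the surviving data is then a single polynomial $P_1$ of degree $\ell$, nonvanishing on the positive integers by the non-degeneracy condition of Theorem~\ref{theo1}. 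I would close by checking—either through the explicit inverse bijection given after Theorem~\ref{theo1} or directly from the coproduct $\Delta(c_n)$—that the $\Lambda$-arrays with $\lambda_{i,j}=0$ for $j\geq 2$ are exactly those of the scaled corollas $t_n=s_nc_n$.

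The engine of the argument is the single family of pre-Lie relations $PL(i,j,1)$. Setting $k=1$ in \eqref{eq::prelie_relation} and isolating the two terms proportional to $\lambda_{i,j+1}$ yields, for every $j\geq 1$ and as an identity of polynomials in $i$,
\[
P_j(i)\,P_1(i+j)-P_1(i)\,P_j(i+1)=\bigl(P_1(j)-P_j(1)\bigr)\,P_{j+1}(i),
\]
where the scalar on the right is $P_1(j)-P_j(1)=\lambda_{j,1}-\lambda_{1,j}$. This is the higher-order analogue of the recursion driving the first-order classification: it produces $P_{j+1}$ from $P_1$ and $P_j$ whenever that scalar is nonzero.

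The key computation is the leading behaviour of the left-hand side. Writing $P_1(i)=a_1i^{\ell}+\cdots$ with $a_1\neq 0$, and $P_j(i)=a_ji^{d_j}+\cdots$ with $d_j=\deg P_j$ and $a_j\neq 0$, the top-degree terms of $P_j(i)P_1(i+j)$ and $P_1(i)P_j(i+1)$ (both of degree $d_j+\ell$) cancel, and a one-line expansion shows the coefficient of $i^{\,d_j+\ell-1}$ in the difference equals $a_1a_j(\ell j-d_j)$. For $j\geq 2$ we have $\ell j\geq 2\ell>\ell\geq d_j$, so this coefficient is nonzero and the left-hand side has degree exactly $d_j+\ell-1$. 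Hence, if some $P_j$ with $j\geq 2$ were nonzero, the right-hand side could not vanish, forcing $P_1(j)\neq P_j(1)$ and $\deg P_{j+1}=d_j+\ell-1$; in particular $P_{j+1}$ is again nonzero and $j+1\geq 2$, so the identity reapplies and iterating gives $\deg P_{j+m}=d_j+m(\ell-1)$. Since $\ell\geq 2$ this grows without bound, contradicting $\deg P_{j+m}\leq\ell$. Therefore $P_j\equiv 0$ for all $j\geq 2$.

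I expect the only genuine obstacle to be the bookkeeping in the leading-coefficient cancellation, and the point most worth flagging is where $\ell\geq 2$ enters: the degree increment produced by the recursion is exactly $\ell-1$, which is strictly positive precisely when $\ell\geq 2$. For $\ell=1$ the same identity gives $\deg P_{j+1}=d_j$, the diagonals need not terminate, and the rich list of Theorem~\ref{thm::classification_1st_order_strong} survives; it is exactly the strict growth $d_{j+1}=d_j+(\ell-1)>d_j$ that collapses every higher-order solution to a scaled corolla.
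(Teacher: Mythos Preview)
Your proof is correct and uses essentially the same ingredient as the paper—the pre-Lie relation $PL(i,j,1)$ viewed as a polynomial identity in $i$, together with the leading-coefficient computation you carry out (your $a_1a_j(\ell j-d_j)$ is, up to sign and notation, the paper's $a_{1,1}a_{k,m}(\ell-m+1-k\ell)$). The organization, however, is genuinely different. The paper fixes $k\geq 2$ and kills the coefficients of $P_k$ one at a time by an inner induction on the index $m$: it pads every $f_j$ out to degree $\ell$, extracts the $i^{2\ell-m}$ coefficient of the relation (via iterated finite differences rather than directly), and shows this forces $a_{k,m}=0$. You instead read the same relation as a recursion $P_j\mapsto P_{j+1}$ and observe that each step raises the degree by $\ell-1$, which immediately overruns the bound $\deg P_{j+m}\leq\ell$. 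Your packaging is cleaner—no inner induction, no finite-difference formalism—and it makes transparently visible \emph{why} $\ell\geq 2$ is the threshold (the increment $\ell-1$ must be positive), whereas the paper's version checks the inequality $\ell-m+1-k\ell\neq 0$ by hand. The paper's organization, on the other hand, does not need to track $d_j$ or invoke the intermediate non-vanishing $P_1(j)\neq P_j(1)$; it simply reads off one coefficient at a time. Both are short, and your closing identification of the remaining $\Lambda$-arrays with scaled corollas is the same as the paper's implicit appeal to Example~\ref{eg corolla 2}.
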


Note that by Remark~\ref{rem homog} none of the strong $\ell$th order sequences for $\ell \geq 2$ can correspond to homogeneous generalized renormalization group equations since, by the same argument as for the corollas in the strong first order case, the $j=1$ diagonal gives $t_0=1/b$ and so requires $b\neq 0$ but this gives the sequence $b,0,0,0, \ldots$ for $j>1$ which is not of any order.

\begin{proof}

Let $\ell \geq 2$ be given, and consider the pre-Lie array $(\lambda_{i,j})_{i,j \geq 1}$ given by $\lam_{i,j} = f_j(i)$ where we define 
\begin{equation*}
    f_j(i) = a_{j,1}i^{\ell} + a_{j,2}i^{\ell - 1} + \ldots + a_{j,\ell}i + a_{j, \ell + 1}
\end{equation*}
Consider an arbitrary (and fixed) $k \geq 2$, and form the sequence of relations $\C = \set{PL(i,1,k) : i \in \N}$. Now we can ask ourselves what an arbitrary member of this set looks like? We simply  evaluate the pre-Lie relation of Theorem \ref{theo1} (where we have moved all terms to the left-hand side of the equation) at the appropriate indices and find that for arbitrary $i$:
\begin{equation}
\label{eq::general_prelie_relation_in_proof_of_strong_kth_order_theorem}
    PL(i,1,k): \hspace{1cm} f_j(i)f_{k}(i+j) - f_k(j)f_{j + k}(i) - f_{k}(i)f_j(i+k) + f_j(k)f_{k + j}(i) = 0
\end{equation}
We remark that the collection $\C$ has infinitely many relations, but only $3(k+1)$ variables since $k$ is fixed. Ultimately it is this over-saturation of equations that will enable us to prove the result.

Evaluating the $f_j$ in (\ref{eq::general_prelie_relation_in_proof_of_strong_kth_order_theorem}) above with the indicated arguments yields:
\begin{multline}
\label{eq::general_prelie_relation_in_the_proof_of_strong_kth_order_theorem_02}
    (a_{1,1}i^{\ell} + a_{1,2}i^{\ell - 1} + \ldots + a_{1,\ell}i + a_{1, \ell + 1})(a_{k,1}(i+1)^{\ell} + a_{k,2}(i+1)^{\ell - 1} + \ldots + a_{k,\ell}(i+1) + a_{k, \ell + 1}) \\
    - (a_{k,1} + a_{k,2} + \ldots + a_{k,\ell} + a_{k, \ell + 1} )(a_{k+1, 1}i^{\ell} + a_{k+1, 2}i^{\ell - 1} + \ldots + a_{k_o + 1, \ell}i + a_{k + 1, \ell + 1}) \\
    - (a_{k,1}i^{\ell} + a_{k,2}i^{\ell - 1} + \ldots + a_{k,\ell}i + a_{k, \ell + 1})(a_{1,1}(k + i)^{\ell} + a_{1,2}(k + i)^{\ell - 1} + \ldots + a_{1,\ell}(k + i) + a_{k, \ell + 1}) \\
    + (a_{1,1}k^{\ell} + a_{1,2}k^{\ell - 1} + \ldots + a_{1,\ell}k + a_{1, \ell + 1})(a_{k + 1,1}i^{\ell} + a_{k + 1,2}i^{\ell - 1} + \ldots + a_{k + 1,\ell}i + a_{k + 1, \ell + 1}) = 0
\end{multline}
Now the expression on the left hand side of (\ref{eq::general_prelie_relation_in_the_proof_of_strong_kth_order_theorem_02}) is a sum of products of polynomials in the variable $i$, and as a consequence is also a polynomial in $i$. Let us examine what the highest-degree term of (\ref{eq::general_prelie_relation_in_the_proof_of_strong_kth_order_theorem_02}) looks like. The highest power of $i$ appearing in (\ref{eq::general_prelie_relation_in_the_proof_of_strong_kth_order_theorem_02}) will be $2\ell$, with the first term contributing a term of $a_{1,1}a_{k,1}i^{2\ell}$, and the third term contributing a term of $-a_{1,1}a_{k,1}i^{2\ell}$, so taken together the coefficient of $i^{2\ell}$ is zero; the second and fourth terms will only contribute to the highest power of the polynomial when $2\ell = \ell$---that is, when $\ell = 0$ and $(\lambda_{i,j})_{i,j \geq 1}$ is $0$th-order. 

Hence we look instead at the next highest power of $i$, which is $2\ell - 1$. From the first term of (\ref{eq::general_prelie_relation_in_the_proof_of_strong_kth_order_theorem_02}), we obtain a factor of $(a_{1,1}a_{k,1}\ell + a_{1,1}a_{k,2} + a_{1,2}a_{k,1})i^{2\ell - 1}$. From the third term of (\ref{eq::general_prelie_relation_in_the_proof_of_strong_kth_order_theorem_02}) we obtain a factor $-(a_{1,1}a_{k,1}k\ell + a_{1,2}a_{k,1} + a_{1,1}a_{k,2})i^{2\ell - 1}$. Note that the second and fourth term of (\ref{eq::general_prelie_relation_in_the_proof_of_strong_kth_order_theorem_02}) will contribute to the highest power of $i$ only when $2\ell - 1 = \ell$---that is, when $\ell = 1$ and the sequence is consequently first order! Hence since $\ell  \geq 2$ we have that the coefficient of the highest power of $i$ is equal to:
\begin{align*}
    (a_{1,1}a_{k,1}\ell + a_{1,1}a_{k,2} + a_{1,2}a_{k,1})i^{2\ell - 1} - (a_{1,1}a_{k,1}k\ell + a_{1,2}a_{k,1} + a_{1,1}a_{k,2})i^{2\ell - 1} 
\end{align*}
which simplifies to:
\begin{equation}
    a_{1,1}a_{k,1}\ell(1 - k)
\end{equation}
Now the power of this setup comes down to the fact that  $i$ is an index ranging over the positive integers. This means that---by definition---the sequence of relations $\C$ is in fact a $(2\ell - 1)$th-order sequence in the variable $i$. But in turn, this means that taking the $(2\ell - 1)$st consecutive differences of the sequence will be constant, and hence that taking the $(2\ell)$th consecutive differences of $\C$ will be equal to $0$. Now taking the first difference will cause the $i^0$ (that is, constant) terms to cancel, the second difference will cause the $i$ terms to cancel, and so on. Hence by taking the $(2\ell)$th consecutive differences, every term up through the terms containing $i^{2\ell -1}$ will cancel. This means that we are left with the equation:
\begin{equation}
    Na_{1,1}a_{k,1}\ell(1 - k) = 0
\end{equation}
for $N$ a nonzero element of $\K$. (More precisely, $\displaystyle N = \sum_{j = 0}^{\ell}\binom{\ell}{j}(i - j)^{\ell}$. A standard combinatorial exercise can be used to show that this is equal to $\ell!$). If $a_{1,1} = 0$, it follows that the sequence $(\lambda_{i,j})_{i,j \geq 1}$ is not actually $\ell$th order. Moreover, we know that $\ell \geq 2$ by hypothesis, and $k \neq 1$ (as otherwise the pre-Lie relations are a tautology). Hence the only solution is that $a_{k,1} = 0$ for all $k \geq 2$.

We now perform induction on the second index of the $a_{k,m}$, taking as our base case the analysis above wherein $m = 1$. Suppose that $a_{k,t} = 0$ for all $t$ from $1$ up to $m-1$, and consider the case of $t = m$. (We are making the assumption that $m \leq \ell + 1$, as otherwise $a_{k,m} = 0$ already). Evaluating the $f_j$ in (\ref{eq::general_prelie_relation_in_proof_of_strong_kth_order_theorem}) with the correct values as we did before yields:
\begin{multline}
\label{eq::general_prelie_relation_in_the_proof_of_strong_kth_order_theorem_03}
    (a_{1,1}i^{\ell} + a_{1,2}i^{\ell - 1} + \ldots + a_{1,\ell}i + a_{1, \ell + 1})(a_{k,m}(i+1)^{\ell - m + 1} + a_{k,m+1}(i+1)^{\ell - m} + \ldots + a_{k,\ell}(i+1) + a_{k, \ell + 1}) \\
    - (a_{k,m} + a_{k,m+1} + \ldots + a_{k,\ell} + a_{k, \ell + 1} )(a_{k+1, m}i^{\ell - m + 1} + a_{k+1, m+1}i^{\ell - m} + \ldots + a_{k + 1, \ell}i + a_{k + 1, \ell + 1}) \\
    - (a_{k,m}i^{\ell - m +1} + a_{k,m + 1}i^{\ell - m} + \ldots + a_{k,\ell}i + a_{k, \ell + 1})(a_{1,1}(k + i)^{\ell} + a_{1,2}(k + i)^{\ell - 1} + \ldots + a_{1,\ell}(k + i) + a_{k, \ell + 1}) \\
    + (a_{1,1}k^{\ell} + a_{1,2}k^{\ell - 1} + \ldots + a_{1,\ell}k + a_{1, \ell + 1})(a_{k + 1,m}i^{\ell - m + 1} + a_{k + 1,m+1}i^{\ell - m} + \ldots + a_{k + 1,\ell}i + a_{k + 1, \ell + 1}) = 0
\end{multline}
Now this time, the highest power of $i$ appearing is $2\ell - m + 1$. However, we find that the first term contributes a term of $a_{1,1}a_{k,m}i^{2\ell - m + 1}$ and the third term contributes $-a_{1,1}a_{k,m}i^{2\ell - m + 1}$. Exactly as before, we get that the second and fourth terms contribute to the highest power of $i$ only when $2\ell - m + 1 = \ell - m + 1$; that is, exactly when $\ell = 0$ and $(\lambda_{i,j})_{i,j \geq 1}$ is a $0$th-order array. 

Hence we look at the second highest power of $i$, namely $i^{2\ell - m}$. From the first term of (\ref{eq::general_prelie_relation_in_the_proof_of_strong_kth_order_theorem_03}) we get a contribution of $(a_{1,1}a_{k,m}(\ell - m + 1) + a_{1,1}a_{k,m+1} + a_{1,2}a_{k,m})i^{2\ell - m}$ where we get $(\ell - m + 1)$ from the binomial theorem. From the third term of (\ref{eq::general_prelie_relation_in_the_proof_of_strong_kth_order_theorem_03}), we get a contribution of $-(a_{1,1}a_{k,m}k\ell + a_{1,2}a_{k,m} + a_{1,1}a_{k,m+1})i^{2\ell - m}$. We note once again that the second and fourth terms of (\ref{eq::general_prelie_relation_in_the_proof_of_strong_kth_order_theorem_03}) contribute only when $2\ell - m = \ell - m + 1$; that is, when $\ell = 1$. Hence the $i^{2\ell - m}$ term of (\ref{eq::general_prelie_relation_in_the_proof_of_strong_kth_order_theorem_03}) has a coefficient of
\begin{equation*}
    (a_{1,1}a_{k,m}(\ell - m + 1) + a_{1,1}a_{k,m+1} + a_{1,2}a_{k,m}) - (a_{1,1}a_{k,m}k\ell + a_{1,2}a_{k,m} + a_{1,1}a_{k,m+1})
\end{equation*}
which simplifies to:
\begin{equation*}
    a_{1,1}a_{k,m}(\ell - m + 1 - k\ell)
\end{equation*}
We take the $(2\ell - m + 1)$th consecutive differences of the sequence $\C$ to get an equation: 
\begin{equation}
    \label{eq::kth_order_strong_proof_inductive_step}
    Na_{1,1}a_{k,m}(\ell - m + 1 - k\ell) = 0
\end{equation}
for $N$ a nonzero constant; see the footnote included in the argument for the base case above. As before, $a_{1,1} \neq 0$, as otherwise $(\lambda_{i,j})_{i,j \geq 1}$ is not of order $\ell$. Now suppose that $\ell - m + 1 - k\ell = 0$. This would mean in particular that $\ell = m - 1 + k\ell$, but since $k \geq 2$, and $\ell \geq 2$, we must then have that $m < 0$. But this violates the inductive hypothesis. Hence we achieve that $\ell - m + 1 - k\ell \neq 0$, and so the only possibility in equation (\ref{eq::kth_order_strong_proof_inductive_step}) is that $a_{k, m} = 0$. Since $k \geq 2$ was chosen arbitrarily, it follows that $a_{k,m} = 0$ for all $k \geq 2$.

This completes the proof.
\end{proof}

\section{Comments on weak sequences}

\label{sect::comments_on_weak_sequences}

Perhaps the most interesting example of a weak sequence is the Connes-Moscovici Hopf algebra, see Examples~\ref{ex::Connes_Moscovici} and \ref{eg::connes_moscovici_example}.  As observed before the Connes-Moscovici generators are not a strong $k$-th order sequences for any $k$.  However, the $\lambda_{n, 1}$ sequence is a second order sequence and setting $\sigma(t)=0$ for $t\neq \tun$ we can set all the other sequences to $0$ and we see that the Connes-Moscovici Hopf algebra is a weak second order sequence.  Setting $\sigma(t)=0$ for $t \neq \tun$ is the same as taking the Feynman rules $\phi(t) = L^{|t|}/t!$, the tree factorial Feynman rules of Section \ref{sect::trees_and_tree_feynman_rules}.

This is nice because it is an example which is in some sense naturally second order, which we do not see among the strong sequences.  It emphasizes the difference between Connes-Moscovici and the Hopf subalgebras which come from Dyson-Schwinger equations, all of which are strong 1st order or strong 0th order.  The fact that Connes-Moscovici is second order, at least in this weak sense, was part of the motivation for defining higher order renormalization group equations.

\medskip

Unsurprisingly weak sequences are too wild for any useful characterization.  As a further example let us consider one particular, but still quite intricate case.  If we set all the $\lambda_{i,1}$ to $1$ then we have a constant sequence in the leftmost diagonal, so this will be weak $0$th order using the tree factorial Feynman rules $\phi(t) = L^{|t|}/t!$.  Even all such $0$th order sequences are hard to characterize.  However, if we restrict the rightmost diagonal to have entries only in $\{0,1\}$ then we are able to characterize the resulting sequences as detailed below.

We now look for all sequences $(\lambda_{i,j})_{i,j\geq 1}$ of $\seq$ such that :
\begin{enumerate}
\item For any $i\geq 1$, $\lambda_{i,1}=1$.
\item For any $j\geq 1$, $\lambda_{1,j}\in \{0,1\}$.
\end{enumerate}
The set of these sequences is denoted by $\seq_{0-1}$. 
If $(\lambda_{i,j})_{i,j\geq 1}$ is such a sequence, we put $\lambda_{1,j}=x_j$ for any $j\geq 1$. 
For any $j$, $x_j\in \{0,1\}$ and $x_1=1$. One of the reasons these sequences are of interest is that---by stipulating that the leftmost diagonal $(\lambda_{i,1})_{i \geq 1}$ is all $1$'s---the entire $\Lambda$-array is determined by a choice of the rightmost diagonal $(\lambda_{1,j})_{j \geq 1} =: (x_j)_{j \geq 1}$. By further assuming that $x_j \in \{ 0,1\}$ for each $j$, we can completely classify which choices of $(x_j)_{j \geq 1}$ give rise to a valid element of $\Lambda$. The following lemmas each prove a new structural characteristic of this sequence $(x_j)_{j \geq 1}$, culminating in the full classification, Proposition \ref{prop seq 01}.  \\

Let us consider a sequence $(\lambda_{i,j})_{i,j\geq 1}\in \seq_{0-1}$.

\begin{lemma}\label{lemma4}
Let $j,N\geq 1$ such that $x_j=\ldots=x_{j+N-1}=0$ and $x_{j+N}=1$. 
For any $p\in \{0,\ldots, N\}$,
\[\lambda_{i,j+p}=(-1)^{N-p}\binom{i-1}{N-p}.\]
\end{lemma}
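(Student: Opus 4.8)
The plan is to extract a single first-order recurrence in the index $i$ from the pre-Lie relations and then solve it by a descending induction on $p$. First I would specialize $PL(i,1,k)$ for arbitrary $i,k\geq 1$. Since we are in $\seq_{0-1}$, we have $\lambda_{i,1}=\lambda_{i+k,1}=\lambda_{k,1}=1$ and $\lambda_{1,k}=x_k$, so the relation
\[
\lambda_{i,1}\lambda_{i+1,k}-\lambda_{1,k}\lambda_{i,1+k}=\lambda_{i,k}\lambda_{i+k,1}-\lambda_{k,1}\lambda_{i,k+1}
\]
collapses to
\[
\lambda_{i+1,k}=\lambda_{i,k}-(1-x_k)\lambda_{i,k+1}.
\]
This is the engine of the argument: it expresses column $k$ one step further in $i$ in terms of columns $k$ and $k+1$, with the coefficient governed entirely by $x_k$.

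Next I would dispose of the base case $p=N$. Because $x_{j+N}=1$, the recurrence with $k=j+N$ degenerates to $\lambda_{i+1,j+N}=\lambda_{i,j+N}$, so that column is constant in $i$; together with the initial value $\lambda_{1,j+N}=x_{j+N}=1$ this gives $\lambda_{i,j+N}=1=(-1)^{0}\binom{i-1}{0}$, which is exactly the claimed formula at $p=N$.

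For the inductive step I would run a descending induction on $p$ from $N$ down to $0$ (equivalently an ascending induction on $m=N-p$). For $0\le p\le N-1$ we have $x_{j+p}=0$, so the recurrence with $k=j+p$ reads $\lambda_{i+1,j+p}=\lambda_{i,j+p}-\lambda_{i,j+p+1}$, with initial datum $\lambda_{1,j+p}=x_{j+p}=0$. Assuming inductively that $\lambda_{i,j+p+1}=(-1)^{N-p-1}\binom{i-1}{N-p-1}$, it suffices to verify that the proposed closed form $\lambda_{i,j+p}=(-1)^{N-p}\binom{i-1}{N-p}$ satisfies both this recurrence and the initial condition; since a first-order recurrence together with its value at $i=1$ has a unique solution, this pins down the whole column. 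The initial condition holds because $\binom{0}{N-p}=0$ for $p<N$, and the recurrence check is precisely Pascal's identity $\binom{i}{N-p}-\binom{i-1}{N-p}=\binom{i-1}{N-p-1}$ after factoring out the sign $(-1)^{N-p}$.

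The computation is essentially routine; the only points needing care are organizing the nested induction correctly — the outer descending induction on $p$ feeds each column's closed form into the recurrence for the next column, while within each column an inner induction on $i$ (or the uniqueness remark) does the work — and treating the degenerate base case $p=N$ separately, since there the recurrence coefficient $1-x_{j+N}$ vanishes and the generic step does not apply.
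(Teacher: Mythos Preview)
Your proof is correct and follows essentially the same approach as the paper: both derive the recurrence $\lambda_{i+1,k}=\lambda_{i,k}-(1-x_k)\lambda_{i,k+1}$ from the pre-Lie relation (you use $PL(i,1,k)$, the paper uses $PL(i,j,1)$, but these are the same relation by the $j\leftrightarrow k$ symmetry of $PL$), and then both run a descending induction on $p$ (equivalently ascending on $N-p$) with an inner induction on $i$, treating the case $p=N$ separately and invoking Pascal's identity for the step.
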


\begin{proof}
By the pre-Lie relation $PL(i,j,1)$, for any $i\geq 1$, $\lambda_{i,j}-\lambda_{i,j+1}=\lambda_{i+1,j}-x_j \lambda_{i,j+1}$, so:
\begin{align}
\label{EQ2} \lambda_{i+1,j}&=\lambda_{i,j}+(x_j-1)\lambda_{i,j+1}.
\end{align}
We proceed by induction on $N-p$. If $N-p=0$, we proceed by induction on $i$. If $i=1$,
then $\lambda_{i,j+N}=x_{j+N}=1$. Otherwise, by the induction hypothesis, (\ref{EQ2}) implies that:
\[\lambda_{i,j+N}=\lambda_{i-1,j}+0=1,\]
so the results holds for any $i$ if $p=N$. If $N-p\geq 1$, we obtain that for any $i$:
\begin{align*}
\lambda_{i+1,j+p}&=\lambda_{i,j+p}-(-1)^{N-p-1}\binom{i-1}{N-p-1}\\
&=\lambda_{i,j+p}+(-1)^{N-p}\binom{i-1}{N-p-1}.
\end{align*}
In this case as well we proceed by induction on $i$. If $i=1$, then $\lambda_{1,j+N-p}=0$. Otherwise,
\begin{align*}
\lambda_{i,j+p}&=\lambda_{i-1,j+p}+(-1)^{N-p}\binom{i-2}{N-p-1}\\
&=(-1)^{N-p}\left(\binom{i-2}{N-p}+\binom{i-2}{N-p-1}\right)\\
&=(-1)^{N-p}\binom{i-1}{N-p}.
\end{align*}
So the result hold for any $i,j$. \end{proof}

\begin{lemma}
Let us assume that there exists $j,N\geq 2$  such that $x_j=\ldots=x_{j+N-1}=0$ and $x_{j+N}=1$.
Then $x_2=x_3=0$.
\end{lemma}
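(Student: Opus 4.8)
The plan is to pin down the single entry $\lambda_{2,2}$ in two independent ways and compare. Throughout I keep the notation $x_j=\lambda_{1,j}$, and I recall from the proof of Lemma~\ref{lemma4} the column recurrence coming from $PL(i,j,1)$,
\[\lambda_{i+1,j}=\lambda_{i,j}+(x_j-1)\lambda_{i,j+1},\]
which is valid for all $i,j\geq 1$. First I would dispose of the case $j=2$: since $N\geq 2$ the hypothesis already gives $x_2=x_j=0$ and $x_3=x_{j+1}=0$, so there is nothing to prove. Hence I assume $j\geq 3$ and set $k:=j+N-2$, so that $3\leq j\leq k\leq j+N-1$. In particular $k$ lies inside the zero-run and $k\neq 2$, so the relations $PL(\,\cdot\,,2,k)$ are not tautological.

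The decisive feature of this choice is that $k+2=j+N$ lands exactly on the index that terminates the run. By Lemma~\ref{lemma4} (applied with $p=N-2$ and with $p=N$) the two columns I need are known explicitly, namely
\[\lambda_{i,k}=\binom{i-1}{2},\qquad \lambda_{i,k+2}=\lambda_{i,j+N}=1\quad(i\geq 1).\]
Thus $\lambda_{1,k}=\lambda_{2,k}=0$, while $\lambda_{3,k}=1$, $\lambda_{4,k}=3$, and the whole column $k+2$ is constant equal to $1$. I would then write out $PL(1,2,k)$ and $PL(2,2,k)$. Because $\lambda_{1,k}=\lambda_{2,k}=0$, every term carrying the uncontrolled far-down entries $\lambda_{k+1,2}$ and $\lambda_{k+2,2}$ is annihilated, and the two relations collapse to $x_2=-\lambda_{k,2}$ and $3\lambda_{2,2}=-\lambda_{k,2}$ respectively. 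Eliminating $\lambda_{k,2}$ gives the clean identity $3\lambda_{2,2}=x_2$.

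To finish I would compute $\lambda_{2,2}$ a second way, straight from the column recurrence above with $i=1$, $j=2$, obtaining $\lambda_{2,2}=x_2+(x_2-1)x_3$. Substituting into $3\lambda_{2,2}=x_2$ yields $2x_2=3(1-x_2)x_3$. Since $x_2,x_3\in\{0,1\}$ and $\K$ has characteristic zero, the value $x_2=1$ would force $2=0$, a contradiction, so $x_2=0$; feeding this back leaves $3x_3=0$, whence $x_3=0$, as claimed. The only genuine obstacle is the choice of the probe index $k$: one must make $\lambda_{1,k}=\lambda_{2,k}=0$ hold (so that the unknown entries deep in column $2$ disappear) while simultaneously forcing $k+2$ onto the all-ones terminator column (so that $\lambda_{i,k+2}$ is trivial). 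The value $k=j+N-2$ is exactly the one realizing both conditions, and it is this alignment, rather than any computation, that makes the argument go through uniformly for every $N\geq 2$.
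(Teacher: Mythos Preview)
Your proof is correct and takes a genuinely different route from the paper's.

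The paper works with $PL(i,j,2)$ (where $j$ is the \emph{start} of the zero-run), evaluates at $i=N-1$ and $i=N$ to obtain $\lambda_{j,2}=-\lambda_{N-1,2}$ and $(N-1)\lambda_{j,2}=-(N+1)\lambda_{N,2}$, and then argues by cases: if $x_2=1$ the recurrence forces the whole column $\lambda_{\,\cdot\,,2}$ to be constant~$1$, contradicting these identities; if $x_2=0$ and $x_3=1$ one has $\lambda_{i,2}=1-i$, and substituting gives $j=-N$, again absurd.

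You instead probe $PL(1,2,k)$ and $PL(2,2,k)$ with $k=j+N-2$ at the \emph{end} of the run. The point of this choice is that Lemma~\ref{lemma4} gives $\lambda_{i,k}=\binom{i-1}{2}$, so the first two entries of that column vanish and annihilate every occurrence of the uncontrolled deep entries $\lambda_{k+1,2},\lambda_{k+2,2}$; simultaneously $k+2=j+N$ lands on the all-ones terminator column. The two relations then collapse to the single identity $3\lambda_{2,2}=x_2$, which you combine with $\lambda_{2,2}=x_2+(x_2-1)x_3$ to finish without any case split on the second column.

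What each approach buys: the paper's argument computes the column $\lambda_{\,\cdot\,,2}$ explicitly in each putative case and checks for inconsistency, which is direct but requires separate treatments of $x_2=1$ and $(x_2,x_3)=(0,1)$. Your argument is a targeted elimination that produces one clean numerical constraint valid uniformly in $j$ and $N$; the $\{0,1\}$ hypothesis on $x_2,x_3$ is used only in the last line. It is shorter and arguably more conceptual, the whole content being the alignment of $k$ you highlight at the end.
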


\begin{proof}
By the pre-Lie relation $PL(i,j,2)$, for any $i$:
\begin{align*}
(-1)^N\binom{i-1}{N}\lambda_{i+j,2}-(-1)^{N-2}\lambda_{j,2}\binom{i-1}{N-2}
&=(-1)^N\lambda_{i,2}\binom{1+i}{N}-(-1)^{N+N-2}\binom{1}{N}\binom{i-1}{N-2},\\
\binom{i-1}{N}\lambda_{i+j,2}-\lambda_{j,2}\binom{i-1}{N-2}&=\binom{1+i}{N}\lambda_{i,2}.
\end{align*}
For $i=N-1$, we obtain $\lambda_{j,2}=-\lambda_{N-1,2}$.  For $i=N$, we obtain $\lambda_{j,2}(N-1)=(N+1)\lambda_{N,2}$.
Finally:
\[\lambda_{j,2}=-\frac{N+1}{N-1}\lambda_{N,2}.\]
If $x_2=1$, by  Lemma \ref{lemma4}, for any $i$, $\lambda_{i,2}=1$: this is a contradiction. So $x_2=0$. 
If $x_3=1$, by Lemma \ref{lemma4}, for any $i$, $\lambda_{i,2}=i-1$ for any $i$. This gives
\[1-j=-\frac{N+1}{N-1}(1-N),\]
so $j=-N$, which is absurd. So $x_3=0$. \end{proof}

\begin{lemma}
Let us assume that $x_2=x_3=0$. For any $i\geq 1$, for any $j\geq 2$, $\lambda_{i,j}=0$. 
\end{lemma}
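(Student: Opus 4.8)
The plan is to reduce the statement to the purely top-row claim that $x_j = 0$ for every $j \geq 2$, and then to rule out the appearance of any later $1$ in $(x_j)$ by a pre-Lie computation that is ultimately obstructed by the $\{0,1\}$ restriction defining $\seq_{0-1}$. The reduction is easy: suppose we already knew $x_j=0$ for all $j\geq 2$. The recurrence \eqref{EQ2}, which for $x_j=0$ reads $\lambda_{i+1,j}=\lambda_{i,j}-\lambda_{i,j+1}$, then supports a one-line induction on $i$. The base case $i=1$ is $\lambda_{1,j}=x_j=0$ for $j\geq 2$; and if $\lambda_{i,j}=0$ for all $j\geq 2$, then $\lambda_{i+1,j}=\lambda_{i,j}-\lambda_{i,j+1}=0$ for all $j\geq 2$ as well (note $j+1\geq 3$ is covered by the hypothesis). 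Hence the whole statement follows once the top row is known to vanish.

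It therefore remains to prove $x_j=0$ for all $j\geq 2$, and I would do this by contradiction. If some $x_j=1$ with $j\geq 2$, let $j_0$ be the least such index; since $x_2=x_3=0$ we have $j_0\geq 4$, and putting $N=j_0-2\geq 2$ we get $x_2=\cdots=x_{j_0-1}=0$ and $x_{j_0}=1$. This is exactly the gap-followed-by-one situation of Lemma~\ref{lemma4} with $j=2$, which yields the explicit values $\lambda_{i,2+p}=(-1)^{N-p}\binom{i-1}{N-p}$ for $0\le p\le N$; in particular $\lambda_{i,2}=(-1)^N\binom{i-1}{N}$ and $\lambda_{i,3}=(-1)^{N-1}\binom{i-1}{N-1}$. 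The task is then to feed these into a pre-Lie relation $PL(i,j,k)$ with $j,k\geq 2$ and force a forbidden entry.

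The cleanest choice is $PL(i,2,3)$. When $N\geq 3$ one has $\lambda_{2,3}=(-1)^{N-1}\binom{1}{N-1}=0$ and $\lambda_{3,2}=(-1)^{N}\binom{2}{N}=0$, so $PL(i,2,3)$ collapses to $\lambda_{i,2}\lambda_{i+2,3}=\lambda_{i,3}\lambda_{i+3,2}$, i.e. to $\binom{i-1}{N}\binom{i+1}{N-1}=\binom{i-1}{N-1}\binom{i+2}{N}$; evaluating at $i=N+1$ gives $\binom{N+2}{3}=N\binom{N+3}{3}$, which is impossible since the right-hand side is $(N+3)$ times the left. When $N=2$ the terms $\lambda_{2,3},\lambda_{3,2}$ no longer vanish, so I would instead use $PL(i,2,4)$ together with $\lambda_{i,4}\equiv 1$ to solve for the sixth column, obtaining $2\lambda_{i,6}=\binom{i+3}{2}-\binom{i-1}{2}=4i+2$, hence $\lambda_{i,6}=2i+1$; then $x_6=\lambda_{1,6}=3\notin\{0,1\}$, contradicting membership in $\seq_{0-1}$. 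In each sub-case the assumed later $1$ forces a forbidden value, so no such $j_0$ exists and $x_j=0$ for all $j\geq 2$.

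The routine ingredients are the reduction and the closing induction on $i$; the main obstacle is the contradiction step. The key point to appreciate is that the first-order recurrences $PL(i,j,1)$ alone impose no constraint on the top row $(x_j)$ whatsoever — they merely propagate row $1$ downward — so the configuration $x=(1,0,0,1,\dots)$ does satisfy every $PL(i,j,1)$, and one genuinely needs a relation with both $j,k\geq 2$ in conjunction with the $\{0,1\}$ hypothesis to eliminate it. The only delicate bookkeeping is choosing, for each size of the initial gap ($N=2$ versus $N\geq 3$), a single pre-Lie relation that dispatches the case cleanly, which the computations above accomplish.
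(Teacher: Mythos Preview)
Your proof is correct. The overall architecture matches the paper's: first reduce to showing $x_j=0$ for all $j\geq 2$, then finish with the obvious induction on $i$ via \eqref{EQ2}. The contradiction argument, however, is organized more economically than the paper's. The paper takes the least index $N\geq 4$ with $x_N=1$ and disposes of $N=4,5,6$ by hand (each with its own pair of pre-Lie relations), then for $N\geq 7$ chooses indices $p,q\approx N/2$ and runs $PL(p,q,2)$, exploiting vanishing of several binomials to force a contradiction. You instead split only into $j_0=4$ versus $j_0\geq 5$ (your $N=2$ versus $N\geq 3$): the first case is the same as the paper's $N=4$ (and you correctly note that the auxiliary computation of $x_5$ is unnecessary, since $PL(i,2,4)$ does not touch column $5$), while for all $j_0\geq 5$ you use a single evaluation of $PL(i,2,3)$ at $i=N+1$, which collapses because $\lambda_{2,3}=\lambda_{3,2}=0$ whenever $N\geq 3$, yielding the impossible identity $\binom{N+2}{3}=(N+3)\binom{N+2}{3}$. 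This is cleaner and avoids the parity split and the separate small cases of the paper's argument; the paper's route, on the other hand, makes somewhat lighter use of Lemma~\ref{lemma4} in the small cases and is perhaps more \emph{ad hoc} but equally valid.
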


\begin{proof}
We first prove that $x_j=0$ for any $j$. Let us assume that there exists $N$ such that $x_N=1$.
We take the smallest possible $N$. By hypothesis, $N\geq 4$.

Let us assume that $N=4$. Then $PL(1,2,3)$ gives $2x_5=0$, so $x_5=0$.
Moreover, $PL(1,2,4)$ gives $2x_6-6=0$, so $x_6=3$: absurd, $x_3\in \{0,1\}$. So $x_4=0$.

Let us assume that $N=5$. Then $PL(1,3,4)$ gives $5x_7=0$, so $x_7=0$.
Moreover, $PL(2,2,4)$ gives $-10=0$, which is absurd. So $x_5=0$.

Let us assume that $N=6$. Then $PL(1,3,4)$ gives $-2x_7=0$, so $x_7=0$.
Moreover, $PL(2,3,4)$ gives $2x_8=0$, so $x_8=0$.
Finally, $PL(3,3,4)$ gives $-2x_9+20=0$, so $x_9=10$: absurd, $x_9\in \{0,1\}$. So $x_6=0$.\\

Let us assume that $N\geq 7$. 
If $N=2k$ is odd, we put $p=q=k+1$. Then $p,q\geq 4$ and:
\[p+q-3=2k-1<N<2k+1=p+q-1,\]
If $N=2k+1$ is odd, we put $p=q=k+1$. Then $p,q\geq 4$ and:
\[p+q-3=2k-1<N \leq 2k+1=p+q-1.\]
In both cases, $p,q\geq 4$ and $p+q-3<N\leq p+q-1$. The pre-Lie relation $PL(p,q,2)$ gives:
\begin{align*}
\lambda_{p,q}\lambda_{p+q,2}-\lambda_{q,2}\lambda_{p,q+2}&=\lambda_{p,2}\lambda_{p+2,q}-\lambda_{2,q}\lambda_{p,q+2}.
\end{align*}
By Lemma \ref{lemma4}, $\displaystyle \lambda_{p,2}=(-1)^{N-2}\binom{p-1}{N-2}$.
As $(N-2)-(p-1)>q-4\geq 0$, this is zero. Similarly, $\lambda_{q,2}=0$. Moreover,
$\displaystyle \lambda_{2,q}=(-1)^{N-q}\binom{1}{N-q}$. As $N-q>p-3\geq 1$, this is zero. We obtain:
\[\lambda_{p,q}\lambda_{p+q,2}=0.\]
By Lemma \ref{lemma4}, $\displaystyle \lambda_{p,q}=(-1)^{N-q}\binom{p}{N-q}$. As $p-N+q\geq 1$, this is nonzero.
Moreover, $\displaystyle \lambda_{p+q,2}=(-1)^{N-2}\binom{p+q-1}{N-2}$. As $(p+q-1)-(N-2)=p+q+1-N\geq 2$,
this is nonzero. This is a contradiction, so for any $j\geq 2$, $x_j=0$.\\

A direct induction on $i$ using (\ref{EQ2}) proves that $\lambda_{i,j}=0$ if $j\geq 2$. \end{proof}

Therefore, the sequence $(x_i)_{i\geq 1}$ has the following form:
\[(1,1,\ldots,1,0,1,\ldots,1,0,1,\ldots) \mbox{ or }
(1,1,\ldots,1,0,1,\ldots,1,0,1,\ldots, 1,0,0,\ldots).\]
We denote by $J$ the set of the indices of the  isolated $0$ in the sequence $(x_j)_{j\geq 1}$ and by $K$ the set of indices
$j$ such that for any $k\geq j$, $x_k=0$. By Lemma \ref{lemma4}:
\begin{enumerate}
\item If $j\in J$, for any $i\geq 1$, $\lambda_{i,j}=1-i$.
\item If $j\in K$, for any $i\geq 1$, $\lambda_{i,j}=0$.
\item If $j\notin J\cup K$, for any $i\geq 1$, $\lambda_{i,j}=1$. 
\end{enumerate}

\begin{lemma}\label{lemma7}
\begin{enumerate}
\item If $j,k\in J$ and $j\neq k$, then $j+k\in J$.
\item If $j\in J$ and $k\notin J\cup K$, then $j+k\notin J\cup K$.
\end{enumerate}
\end{lemma}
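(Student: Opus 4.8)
The plan is to obtain both statements directly from the single pre-Lie relation $PL(i,j,k)$ by substituting the explicit values of the entries $\lambda_{i,\cdot}$ recorded in the case analysis displayed just before the lemma. That analysis assigns to each column $\lambda_{i,m}$ one of exactly three value patterns in $i$, namely $1-i$, $0$, or $1$, according as $m\in J$, $m\in K$, or $m\notin J\cup K$. Since these patterns are pairwise distinct (for instance they take the values $-1$, $0$, $1$ at $i=2$) and the three cases are exhaustive, it suffices in each part to compute the pattern of $\lambda_{i,j+k}$ as a function of $i$ and then read off the category of $j+k$.

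For part 1, I would write out $PL(i,j,k)$ under the assumption $j,k\in J$. Then $\lambda_{i,j}=\lambda_{i,k}=1-i$, $\lambda_{i+j,k}=1-i-j$, $\lambda_{i+k,j}=1-i-k$, $\lambda_{j,k}=1-j$, and $\lambda_{k,j}=1-k$, so that the only unknown is $\lambda_{i,j+k}$, which occurs on both sides. After substitution, collecting the two occurrences of $\lambda_{i,j+k}$ makes the relation collapse to
\[
(k-j)\bigl((1-i)-\lambda_{i,j+k}\bigr)=0.
\]
Because the hypothesis supplies $j\neq k$, the factor $k-j$ is nonzero, whence $\lambda_{i,j+k}=1-i$ for every $i$; by the trichotomy this is precisely the pattern characterizing membership in $J$, so $j+k\in J$.

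For part 2, I would run the same substitution in $PL(i,j,k)$, now with $j\in J$ and $k\notin J\cup K$. Here $\lambda_{i,j}=1-i$, $\lambda_{i+k,j}=1-i-k$, and $\lambda_{k,j}=1-k$, while $\lambda_{i+j,k}=\lambda_{j,k}=\lambda_{i,k}=1$. The identical cancellation of the $\lambda_{i,j+k}$ terms reduces the relation to
\[
k\bigl(1-\lambda_{i,j+k}\bigr)=0,
\]
and since $k\geq 1$ is nonzero this forces $\lambda_{i,j+k}=1$ for all $i$, which is exactly the pattern for $j+k\notin J\cup K$.

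I do not expect a serious obstacle: the whole argument is bookkeeping that reduces each pre-Lie identity to one linear equation in the single unknown column $\lambda_{i,j+k}$. The only steps needing care are the two divisibility moves — dividing by $k-j$ in part 1, which is licensed precisely by the hypothesis $j\neq k$ (and explains why the diagonal case is excluded from the statement), and dividing by $k$ in part 2 — together with the remark that the three value patterns of the trichotomy are distinct, so that identifying the pattern of $\lambda_{i,j+k}$ unambiguously determines the category of $j+k$.
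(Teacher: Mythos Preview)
Your proposal is correct and follows essentially the same route as the paper: both parts are obtained by substituting the known column values into the single relation $PL(i,j,k)$ and solving the resulting linear equation in $\lambda_{i,j+k}$, dividing by $k-j$ in part~1 and by $k$ in part~2. Your explicit remark that the three value patterns $1-i$, $0$, $1$ are pairwise distinct (so the pattern determines the category of $j+k$) is a small clarifying addition that the paper leaves implicit.
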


\begin{proof}
1. For any $i\geq 1$, by the pre-Lie relation $P(i,j,k)$:
\[(1-i)(1-i-j)-(1-j)\lambda_{i,j+k}=(1-i)(1-i-k)-(1-k)\lambda_{i,j+k}.\]
Hence, $(j-k)\lambda_{i,j+k}=(j-k)(1-i)$. As $j\neq k$, $\lambda_{i,j+k}=1-i$, so $j+k\in J$.\\

2. For any $i\geq 1$, by the pre-Lie relation $P(i,j,k)$:
\[1-i-\lambda_{i,j+k}=1-k-i-(1-k)\lambda_{i,j+k},\]
so $\lambda_{i,j+k}=1$: $j+k\notin J\cup K$. \end{proof}

\begin{lemma}
$J=\emptyset$ or $K=\emptyset$.
\end{lemma}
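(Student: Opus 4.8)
The plan is to argue by contradiction: assume that both $J\neq\emptyset$ and $K\neq\emptyset$ and derive a violation of the classification of the $\lambda_{i,j}$ established just before Lemma~\ref{lemma7}. The engine of the argument is the second part of Lemma~\ref{lemma7}, which converts membership facts about $J$ and $K$ into new membership facts; the whole proof then reduces to choosing the right instance to apply.

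First I would pin down the shape of $K$. Since $K$ consists of those indices $j$ with $x_k=0$ for all $k\geq j$, it is upward closed, so $K=\{m,m+1,\ldots\}$ where $m:=\min K$. Because $x_1=1$ we have $1\notin K$, hence $m\geq 2$ and $m-1$ is a legitimate index. Minimality of $m$ forces $x_{m-1}=1$: otherwise $x_k=0$ for all $k\geq m-1$ and $m-1$ would already lie in $K$. As every element of $J\cup K$ is an index at which the sequence vanishes, while $x_{m-1}=1$, we conclude $m-1\notin J\cup K$, so that $\lambda_{i,m-1}=1$ for all $i$.

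Next I would feed this boundary index into Lemma~\ref{lemma7}(2). Fix any $j\in J$ (possible since $J\neq\emptyset$), and take $k=m-1\notin J\cup K$. The lemma then yields $j+(m-1)\notin J\cup K$. On the other hand $j\geq 1$ gives $j+m-1\geq m$, and since $K$ is the tail $\{m,m+1,\ldots\}$ this forces $j+m-1\in K\subseteq J\cup K$, a contradiction. Hence $J$ and $K$ cannot both be nonempty.

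There is no serious obstacle here; the only point requiring care is the bookkeeping at the boundary of the tail---verifying $m\geq 2$, that $x_{m-1}=1$, and therefore that $m-1$ lies in the regime $j\notin J\cup K$ so that Lemma~\ref{lemma7}(2) genuinely applies. Once the correct instance is identified, the contradiction is immediate. I note an alternative route exists---using Lemma~\ref{lemma7}(1) to show that $|J|\geq 2$ forces $J$ to be infinite, which contradicts the finiteness of $J$ imposed by a nonempty $K$---but this only disposes of the case $|J|\geq 2$ and leaves $|J|=1$ to be treated separately, so the uniform argument through part (2) is the cleaner one to present.
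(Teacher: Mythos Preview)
Your proof is correct and follows essentially the same route as the paper's: both take $j\in J$ and $k=\min K-1$, argue that $k\notin J\cup K$, and derive a contradiction from the fact that $j+k\in K$. The only cosmetic difference is that you invoke Lemma~\ref{lemma7}(2) as a black box (and give a careful justification that $x_{m-1}=1$), whereas the paper recomputes the pre-Lie relation $PL(i,j,k)$ directly with $\lambda_{i,j+k}=0$ to obtain $1-i=1-i-k$ and hence $k=0$.
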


\begin{proof}
Let us assume that $J$ and $K$ are nonempty. Let us take $j\in J$ and let $k$ such that the smallest element of $K$ is $k+1$.
Then $k\notin J\cup K$, and the pre-Lie relation $PL(i,j,k)$ gives, for any $i\geq 1$:
\[1-i=1-i-k.\]
So $k=0$, which is absurd. So $J=\emptyset$ or $K=\emptyset$.
\end{proof}

\begin{lemma}
If $|J|\geq 2$, then there exists $m\geq 2$ such that $J=m\mathbb{N}^*$.
\end{lemma}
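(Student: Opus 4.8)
The plan is to exploit the two closure properties of $J$ proved in Lemma~\ref{lemma7}, reading the second one ``backwards'' so that $J$ becomes closed under subtraction; the statement then reduces to the standard fact that a subtractively (and additively) closed set of positive integers is exactly the set of multiples of its least element.

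First I would record that $K=\emptyset$. Since $|J|\geq 2$ we have $J\neq\emptyset$, so by the preceding lemma ($J=\emptyset$ or $K=\emptyset$) we get $K=\emptyset$. With $K=\emptyset$, part~2 of Lemma~\ref{lemma7} reads: if $j\in J$ and $k\notin J$ then $j+k\notin J$. Taking the contrapositive in the variable $k$ yields the subtraction law I want,
\[
a,b\in J,\ a<b\ \Longrightarrow\ b-a\in J,
\]
obtained by putting $j=a$ and $k=b-a\geq 1$: if $b-a$ were not in $J$, then $b=j+k\notin J$, a contradiction. Together with part~1 of Lemma~\ref{lemma7} (if $j\neq k$ lie in $J$ then $j+k\in J$), this makes $J$ behave like the positive part of a subgroup of $\mathbb{Z}$.

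Next, set $m:=\min J$; note $m\geq 2$ because $x_1=1$ forces $1\notin J$. To prove $J\subseteq m\mathbb{N}^*$, take any $n\in J$ and write $n=qm+r$ with $0\leq r<m$. Applying the subtraction law repeatedly -- at each stage $n-km\in J$, and whenever the current value exceeds $m$ I may subtract $m\in J$ once more to reach $n-(k+1)m\in J$ -- I descend to $r=n-qm$. If $r\neq 0$ this gives an element of $J$ with $0<r<m$, contradicting minimality of $m$; hence $r=0$ and $n\in m\mathbb{N}^*$. For the reverse inclusion, $|J|\geq 2$ together with $J\subseteq m\mathbb{N}^*$ supplies an element $qm\in J$ with $q\geq 2$ (any element of $J$ other than $m$). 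The subtraction law then fills in $m,2m,\dots,qm\in J$ by stripping off one $m$ at a time, and part~1 of Lemma~\ref{lemma7}, adding $m$ to $2m,3m,\dots$, produces every higher multiple. Thus $J=m\mathbb{N}^*$ with $m\geq 2$.

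The only delicate points -- more matters of care than of genuine difficulty -- are the reinterpretation of part~2 as a subtraction law, which really does require $K=\emptyset$ and hence the hypothesis $|J|\geq 2$, and the bookkeeping in the two inductions: in the descent each subtraction of $m$ must be taken from an element strictly larger than $m$, so that landing on $r$ with $0<r<m$ is exactly the desired contradiction, while each use of part~1 needs its two summands distinct, which is why $m$ is added only to the multiples $2m,3m,\dots$.
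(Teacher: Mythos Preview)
Your proof is correct and follows essentially the same route as the paper: both take $m=\min J$, use the contrapositive of Lemma~\ref{lemma7}(2) as a subtraction law to run the Euclidean algorithm and obtain $J\subseteq m\mathbb{N}^*$, then use subtraction to reach $2m\in J$ and Lemma~\ref{lemma7}(1) to climb through all higher multiples. Your version is slightly more explicit in invoking $K=\emptyset$ up front (the paper uses this implicitly when reading part~2 backwards) and in tracking the distinctness hypothesis in part~1, but the argument is the same.
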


\begin{proof}
Let $m=\min(J)$. For any $m'\in J$, different from $m$, $m+(m'-m)=m'\in J$; by Lemma \ref{lemma7},
$m'-m\in J$. Inducing, for any $k\in \N$ such that $m'-km>0$, $m'-km\in J$.
In particular, if the Euclidean division of $m'$ by $m$ is $m'=qm+r$, then $r=0$ or $r\in J$.
As $r<m$, if $r>0$ this contradicts the definition of $m$, so $r=0$. We proved that $J\subseteq m\mathbb{N}^*$. \\

Let $m'\in J$, different from $m$. There exists $k>1$, such that $m'=km$. We proved that $m'-(k-2)m=2m\in J$.
By Lemma \ref{lemma7}, $m+2m=3m\in J$; an easy induction proves that $lm\in J$ for any $l\geq 1$.
So $J=m\mathbb{N}^*$.
\end{proof}

These lemmas give the following result:

\begin{prop}\label{prop seq 01}
The elements of $\seq_{0-1}$ are the following:
\begin{itemize}
\item For any $i,j \geq 1$, $\lambda_{i,j}=1$.
\item Case $A(m)$: there exists $m\geq 2$ such that $\displaystyle \lambda_{i,j}=\begin{cases}
1\mbox{ if }j<m,\\
0\mbox{ otherwise}.
\end{cases}$
\item Case $B(m)$: there exists $m\geq 2$ such that $\displaystyle \lambda_{i,j}=\begin{cases}
1-i\mbox{ if }m\mid j,\\
1\mbox{ otherwise}.
\end{cases}$
\item Case $C(m)$: there exists $m\geq 2$ such that $\displaystyle \lambda_{i,j}=\begin{cases}
1-i\mbox{ if }j=m,\\
1\mbox{ otherwise}.
\end{cases}$
\end{itemize}
\end{prop}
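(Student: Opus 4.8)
The plan is to assemble the structural lemmas already proved for sequences in $\seq_{0-1}$ and read off the four families. Recall that for $(\lambda_{i,j})_{i,j\geq 1}\in\seq_{0-1}$ we have set $x_j=\lambda_{1,j}\in\{0,1\}$ with $x_1=1$, and that the preceding lemmas pin the sequence $(x_j)_{j\geq 1}$ down to $1$'s together with isolated interior $0$'s and possibly a terminal tail of $0$'s. Writing $J$ for the set of indices of the isolated zeros and $K$ for the tail, the three-case formula established just before Lemma~\ref{lemma7} gives $\lambda_{i,j}=1-i$ when $j\in J$, $\lambda_{i,j}=0$ when $j\in K$, and $\lambda_{i,j}=1$ when $j\notin J\cup K$. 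Thus the whole array is determined by the pair $(J,K)$, and the task reduces to enumerating the admissible pairs.

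First I would invoke the lemma that $J=\emptyset$ or $K=\emptyset$, which cleanly separates the analysis. If $J=K=\emptyset$, then every $x_j=1$ and the third case of the formula gives $\lambda_{i,j}=1$ throughout, the ladder array. If $J=\emptyset$ but $K\neq\emptyset$, then, there being no isolated zeros, $K$ is upward closed and below it all entries are $1$, so $K=\{m,m+1,\ldots\}$ with $m=\min K\geq 2$ (since $x_1=1$); the formula then yields $\lambda_{i,j}=1$ for $j<m$ and $\lambda_{i,j}=0$ for $j\geq m$, which is exactly Case $A(m)$.

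The remaining branch is $K=\emptyset$, $J\neq\emptyset$, and here I would split on $|J|$. If $|J|=1$, say $J=\{m\}$ with $m\geq 2$, the formula gives $\lambda_{i,m}=1-i$ and $\lambda_{i,j}=1$ for $j\neq m$, which is Case $C(m)$. If $|J|\geq 2$, the lemma characterizing $J$ forces $J=m\mathbb{N}^*$ for some $m\geq 2$, so $j\in J$ precisely when $m\mid j$, and the formula gives $\lambda_{i,j}=1-i$ when $m\mid j$ and $\lambda_{i,j}=1$ otherwise, which is Case $B(m)$. This exhausts all possibilities, establishing that every element of $\seq_{0-1}$ has one of the four listed forms.

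Finally, for the converse one must check that each listed array genuinely lies in $\seq_{0-1}$, i.e.\ by Theorem~\ref{theo1} that it satisfies the pre-Lie relation $PL(i,j,k)$ and the non-degeneracy condition. Non-degeneracy is immediate in all four cases since $\lambda_{n-1,1}=1\neq 0$ for every $n\geq 2$. The all-$1$ array is the ladder array, and Case $A(m)$ coincides with the $b=0$ instance of Proposition~\ref{prop 0th order lambdas} (the $b\to 0$ degeneration of $Z_m$ in Proposition~\ref{prop z in seq}), so both are already recognized as pre-Lie. For Cases $B(m)$ and $C(m)$ the verification is a direct substitution into $PL(i,j,k)$, and I expect the only real obstacle to be the bookkeeping in Case $B(m)$: one must track the residues of $i,j,k$ and of their pairwise sums modulo $m$ to decide which entries equal $1-i$ and which equal $1$, and then confirm the identity in each combination of residue classes. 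This is where the divisibility structure makes the computation least mechanical, though it remains elementary.
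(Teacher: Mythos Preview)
Your proposal is correct and follows exactly the approach the paper intends: the paper's own ``proof'' is simply the sentence ``These lemmas give the following result,'' leaving the case analysis on the pair $(J,K)$ to the reader, and you have spelled out precisely that analysis. One small quibble: you justify Case $A(m)$ by appeal to the $b=0$ instance of Proposition~\ref{prop z in seq}, but that proposition is stated only for $b\neq 0$, and Proposition~\ref{prop 0th order lambdas} is a necessary-condition statement, not a verification that such arrays are pre-Lie; the direct check of $PL(i,j,k)$ for $A(m)$ is trivial (it reduces to $(a_j-a_k)a_{j+k}=0$ with $a_j\in\{0,1\}$), so you may as well do it by hand along with $B(m)$ and $C(m)$, which the paper likewise does not verify explicitly.
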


In terms of the rightmost diagonal $(x_j)_{j \geq 1}$, the proposition says that $(x_j)_{j \geq 1}$ must be all $1$'s, finitely many $1$'s followed by an infinite string of $0$'s (case $A(m)$), finitely many $1$'s followed by a $0$ and repeated (Case $B(m)$), or a string of all $1$'s with a single $0$. See Figure \ref{fig::seq01prop}.

\begin{figure} 
    \centering
    \begin{subfigure}{1\textwidth}
     \centering
     \setlength{\tabcolsep}{6pt}
     \begin{tabular}{ccccccccccccccc}
 & & & & & & & $1$ & & & & & & &  \\ 
 & & & & & & $1$ & &$1$ & & & & & & \\  
 & & & & & $1$ & & $1$ & & $1$ & & & & & \\
 & & & & $1$ & &$1$ & & $1$ & & $1$& & & & \\
 & & & $1$ & & $1$ & & $1$& &  $1$& & $1$ & & & \\
 & & $1$ & & $1$  & &$1$& & $1$& & $1$ & & $1$& & \\
 & $1$ & & $1$ & & $1$  & & $1$& &$1$ & &$1$ & & $1$ &\\
 $1$ & & $1$ & & $1$ & & $1$ & & $1$ & & $1$ & & $1$ & & $1$
    \end{tabular}
    \caption{The $\Lambda$-array corresponding to ladders.}
    \label{fig::ladder_lambda_array}
     \end{subfigure}
     \begin{subfigure}{1\textwidth}
     \setlength{\tabcolsep}{6pt}
     \centering
     \begin{tabular}{ccccccccccccccc}
 & & & & & & & $1$ & & & & & & &  \\ 
 & & & & & &$1$ & & $1$ & & & & & & \\  
 & & & & & $1$ & & $1$ & & $0$ & & & & & \\
 & & & & $1$ & & $1$ & & $0$ & &$0$& & & & \\
 & & & $1$ & & $1$ & & $0$& &  $0$& &$0$& & & \\
 & &$1$ & & $1$  & & $0$ & & $0$ & & $0$ & & $0$& & \\
 & $1$ & & $1$ & & $0$  & & $0$& & $0$ & & $0$ & & $0$ &\\
 $1$ & & $1$ & & $0$& & $0$  & & $0$ & & $0$ & & $0$ & &$0$
\end{tabular}
    \caption{$A(3)$.}
    \label{fig::A(3)}
     \end{subfigure}
     \begin{subfigure}{1\textwidth}
     \setlength{\tabcolsep}{6pt}
     \centering
     \begin{tabular}{ccccccccccccccc}
 & & & & & & & $1$ & & & & & & &  \\ 
 & & & & & & $1$ & & $1$& & & & & & \\  
 & & & & & $1$ & & $1$ & & $0$ & & & & & \\
 & & & & $1$ & & $1$ & & $-1$ & & $1$& & & & \\
 & & & $1$ & & $1$ & & $-2$& &  $1$& & $1$ & & & \\
 & & $1$ & &$1$ & &$-3$ & & $1$ & & $1$ & & $0$ & & \\
 & $1$ & & $1$ & & $-4$ & & $1$& & $1$ & & $-1$ & & $1$ &\\
 $1$ & & $1$ & & $-5$ & & $1$ & & $1$ & & $-2$ & & $1$  & & \hspace{5mm}$1$
\end{tabular}
    \caption{$B(3)$.}
    \label{fig::B(3)}
     \end{subfigure}
     \begin{subfigure}{1\textwidth}
     \setlength{\tabcolsep}{6pt}
     \centering
     \begin{tabular}{ccccccccccccccc}
 & & & & & & & $1$ & & & & & & &  \\ 
 & & & & & & $1$ & & $1$ & & & & & & \\  
 & & & & & $1$ & & $1$ & & $0$ & & & & & \\
 & & & & $1$ & & $1$ & & $-1$ & & $1$& & & & \\
 & & & $1$ & & $1$& & $-2$& &  $1$& & $1$ & & & \\
 & & $1$& & $1$ & &$-3$ & & $1$ & & $1$ & & $1$ & & \\
 & $1$ & & $1$ & & $-4$ & & $1$& & $1$ & & $1$ & & $1$ &\\
 $1$ & & $1$ & & $-5$ & & $1$  & & $1$ & & $1$ & & $1$ & & $1$
\end{tabular}
    \caption{$C(3)$.}
    \label{fig::C(3)}
     \end{subfigure}
    \caption{The four cases of Proposition \ref{prop seq 01} when $m = 3$.}
    \label{fig::seq01prop}
\end{figure}

\begin{prop}
In the case $B(m)$ or $C(m)$, let us denote by $a_n$ the coefficient of $B^+(\tun^n)$ in $t_{n+1}$. The generating
formal series of these coefficients is:
\[G(X)=1+\sum_{n\geq 1}a_nX^n=\frac{1+X}{(1-(-X)^m)^{\frac{1}{m}}}.\]
Consequently, for any $n\geq 0$:
\begin{align*}
a_n&=\begin{cases}
Z(\mathfrak{S}_n,X_1,\ldots,X_n)\Bigg|_{ X_i=\begin{cases}
\mbox{\scriptsize $(-1)^i$  if $m\mid i$,}\\
\mbox{\scriptsize $0$ otherwise}
\end{cases}} & \mbox{ if $m \mid n$},\\
a_{n-1} & \mbox{ if $m \mid n+1$},\\
0 & \mbox{ otherwise.}
\end{cases}\end{align*}
where $Z$ is the cycle index polynomial; see Section 5.2 of \cite{StanleyVolume2}.
\end{prop}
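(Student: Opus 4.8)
The plan is to reduce the whole statement to one exponential generating function identity for the \emph{corolla coefficients} $a_n$, and then to read off the closed form by Pólya theory.

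\textbf{Reduction to a brace coefficient.} First I would invoke the explicit description of the inverse bijection $\theta^{-1}$ recorded just after Theorem~\ref{theo1}: for $(t_n)_{n\ge1}=\theta^{-1}((\lambda_{i,j})_{i,j\ge1})$ one has $t_{n+1}=\sum_{|t|=n+1}\frac{\mu(t)}{s_t}\,t$, where $\mu(t)=\prod_{s\in Vert(t)}\lambda(\text{sizes of the subtrees born at }s)$. The corolla $B^+(\tun^n)$ has exactly one vertex of positive fertility, its root, whose $n$ children are all single vertices, while every leaf has fertility $0$ and contributes the empty product $\lambda()=1$. Hence $\mu(B^+(\tun^n))=\lambda(\underbrace{1,\dots,1}_{n})$ and $s_{B^+(\tun^n)}=n!$, so
\[a_n=\frac{\lambda(1^n)}{n!},\qquad G(X)=\sum_{n\ge0}\frac{\lambda(1^n)}{n!}X^n,\]
the exponential generating series of the Oudom--Guin brace coefficients defined by $X_1\bullet X_1^{\,n}=\lambda(1^n)\,X_{n+1}$.

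\textbf{Computing the series.} Next I would compute $G$ from the Oudom--Guin recursion for $\lambda(i_1,\dots,i_k)$. The recursion does not close on the sequence $\lambda(1^n)$ alone (it produces $\lambda(2,1^{n-2})$, and then all partitions), so I would package everything into the multivariate series $\mathcal G(u_1,u_2,\dots)=\sum_{\mu}\frac{\lambda(\mu)}{\prod_i m_i(\mu)!}\prod_i u_i^{m_i(\mu)}$ and translate ``remove a part of size $r$'' into linear partial differential equations, using $\lambda_{s,r}=1$ on the unstarred diagonals and $\lambda_{s,r}=1-s$ on the distinguished one:
\[\partial_{u_r}\mathcal G=\mathcal G-\sum_{s\ge1}u_s\,\partial_{u_{s+r}}\mathcal G\ \ (r\text{ ordinary}),\qquad \partial_{u_m}\mathcal G=-\mathcal E\,\mathcal G-\sum_{s\ge1}(1-s)\,u_s\,\partial_{u_{s+m}}\mathcal G,\]
where $\mathcal E=\sum_i i\,u_i\partial_{u_i}$ is the grading operator. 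Restricting to the line $u_1=X$, $u_{\ge2}=0$ gives $G=\mathcal G|_{\mathrm{line}}$ together with $G'=G-X\,H_2$, $H_2=\partial_{u_2}\mathcal G|_{\mathrm{line}}$, and the hierarchy is then solved (the only starred diagonal is $j=m$) to reach the first-order equation
\[\frac{G'}{G}=\frac{1}{1+X}+\frac{(-1)^m X^{m-1}}{1-(-X)^m},\]
whose solution with $G(0)=1$ is exactly $G(X)=\dfrac{1+X}{(1-(-X)^m)^{1/m}}$. Here one also checks that cases $B(m)$ and $C(m)$ yield the \emph{same} $G$: although the arrays differ on the diagonals $j=2m,3m,\dots$, one verifies (already visible at $\lambda(2,1,1)$) that their contributions to the corolla coefficients cancel.

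\textbf{Extracting the coefficients.} Finally I would apply the cycle-index/exponential formula $\sum_{n\ge0}Z(\mathfrak S_n;p_1,p_2,\dots)X^n=\exp\!\big(\sum_{k\ge1}\tfrac{p_k}{k}X^k\big)$ with the substitution $p_k=(-1)^k$ if $m\mid k$ and $p_k=0$ otherwise; the exponent collapses to $\frac1m\sum_{j\ge1}\frac{((-1)^mX^m)^j}{j}=-\frac1m\log(1-(-X)^m)$, identifying $(1-(-X)^m)^{-1/m}$ with the substituted cycle-index series. Since that series is supported on multiples of $m$, multiplying by $(1+X)$ produces precisely the asserted three-case description of $a_n$ (the $Z(\mathfrak S_n;\dots)$ value on the support of the first factor, the shifted value $a_{n-1}$ on the next residue class, and $0$ elsewhere). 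The main obstacle is the middle step: resolving the coupled recursion over all partitions, equivalently finding the closed form of $\lambda(1^n)$ and proving that the two families $B(m)$ and $C(m)$ share the same corolla generating function; the reduction and the Pólya extraction are routine by comparison.
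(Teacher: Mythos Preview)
Your reduction step and your cycle-index extraction are both correct and essentially the same as the paper's. The difference, and the weak point of your proposal, is the middle step.

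You assert that the Oudom--Guin recursion ``does not close on the sequence $\lambda(1^n)$ alone (it produces $\lambda(2,1^{n-2})$, and then all partitions)'', and therefore pass to the full multivariate series $\mathcal G$. This is where you miss the simplification that makes the paper's argument work. The recursion lets you choose \emph{which} part to peel off. If instead of removing a part of size $1$ you always remove the unique ``large'' part, you get for $a_{i,j}:=\lambda(1^j,i)$ the closed two-parameter recursion
\[
a_{i,j}=a_{1,j-1}\,\lambda_{j+1,i}-j\,\lambda_{1,i}\,a_{i+1,j-1}.
\]
Since $\lambda_{1,m}=0$ in both $B(m)$ and $C(m)$, the coupling to $a_{m+1,\ast}$ vanishes, and the hierarchy terminates with the finite list $F_1,\dots,F_m$ of exponential generating functions $F_i=\sum_{j\ge 0}\frac{a_{i,j}}{j!}X^j$. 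Eliminating $F_2,\dots,F_m$ yields the first-order ODE $(1-(-X)^m)G'=\frac{1-(-X)^{m-1}}{1+X}G$ directly. Crucially, this closed system only ever uses $\lambda_{\ast,j}$ for $j\le m$, and on those diagonals $B(m)$ and $C(m)$ coincide; the $B(m)$/$C(m)$ issue you flag simply never arises.

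By contrast, your route through all partitions forces you to see the diagonals $j=2m,3m,\dots$, where $B(m)$ and $C(m)$ genuinely differ, and then to argue that these differences cancel in the corolla coefficients. Your proposal hand-waves this (``already visible at $\lambda(2,1,1)$'') and also asserts that ``the only starred diagonal is $j=m$'', which is false for $B(m)$. So as written, the middle step has a real gap for the $B(m)$ case. The fix is exactly the paper's observation: peel off the special part first, and the system closes before those higher diagonals are touched.
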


\begin{proof}
For any $i\geq 1$, $j\geq 0$, we put 
\[a_{i,j}=\lambda(\underbrace{1,\ldots,1}_{\mbox{\scriptsize $j$ times}},i).\]
Then, for any $i,j\geq 1$:
\[a_{i,j}=a_{1,j-1}\lambda_{j+1,i}-\sum_{k=1}^j a_{i+1,j-1}\lambda_{1,i}
=a_{1,j-1}\lambda_{j+1,i}-j\lambda_{1,i}a_{i+1,j-1}.\]
Hence:
\begin{itemize}
\item If $i<m$, $a_{i,0}=1$ and $a_{i,j}=a_{1,j-1}-ja_{i+1,j-1}$ for any $j\geq 1$.
\item If $i=m$, $a_{i,0}=1$ and $a_{i,j}=-ja_{1,j-1}$.
\end{itemize}
If $i\geq 1$, we put:
\[F_i=\sum_{j\geq 0} \frac{a_{i,j}}{j!}X^j.\]
Then:
\[G=1+\sum_{n=1}^\infty \frac{a_{1,n-1}}{n!}X^n=1+\int_0^X F_1(t)\mathrm{d}t.\]
The preceding relations give:
\begin{itemize}
\item If $i<m$:
\begin{align*}
F_i&=1+\sum_{j=1}^\infty \frac{a_{1,j-1}}{j!}X^j-\sum_{j=1}^\infty a_{i+1,j-1}{(j-1)!}X^j=G-XF_{i+1}.
\end{align*}
\item If $i=m$:
\begin{align*}
F_i&=-\sum_{j=1}^\infty \frac{a_{1,j-1}}{(j-1)!}X^j=-XF_1.
\end{align*}
\end{itemize}
A direct induction on $i$ proves that for any $i<m$:
\[F_1=\frac{1-(-X)^i}{1+X}G+(-X)^iF_{i+1}.\]
In particular, for $i=m$:
\[F_1=\frac{1-(-X)^{m-1}}{1+X}G+(-X)^mF_1.\]
Hence, $G$ is the unique solution of:
\[\begin{cases}
(1-(-X)^m)G'=\displaystyle \frac{1-(-X)^{m-1}}{1+X}G,\\
G(0)=1.
\end{cases}\]
This gives:
\[G=\frac{1+X}{(1-(-X)^m)^{\frac{1}{m}}}.\]
Consequently:
\begin{align*}
\ln(G)&=\ln(1+X)-\frac{1}{m}\ln(1-(-X)^m)\\
&=\sum_{k=1}^\infty \frac{(-1)^{k+1}}{k}X^k+\sum_{k=1}^\infty \frac{(-1)^{mk}}{mk}X^{mk}\\
&=\sum_{k\notin m\N} \frac{(-1)^{k+1}}{k}X^k.
\end{align*}
On the other side, for any $n\geq 0$:
\[Z(\mathfrak{S}_n,X_1,\ldots,X_n)=\sum_{i_1+2i_2+\ldots+ni_n=n}\frac{1}{1^{i_1}\ldots n^{i_n}i_1!\ldots i_n!}
X^{i_1}\ldots X_n^{i_n}.\]
Hence:
\begin{align*}
\sum_{n=0}^\infty Z(\mathfrak{S}_n,X)&=\prod_{k=1}^\infty \sum_{j=0}^\infty \frac{X_i^j}{i^j j!}\\
&=\prod_{i=1}^\infty \exp\left(\frac{X_i}{i}\right)\\
&=\exp\left(\sum_{i=1}^\infty \frac{X_i}{i}\right).
\end{align*}
Hence:
\begin{align*}
\ln\left(\sum_{n=0}^\infty Z(\mathfrak{S}_n,X)\right)\Bigg|_{ X_i=\begin{cases}
\mbox{\scriptsize $0$  if $m\mid i$,}\\
\mbox{\scriptsize $(-1)^{i+1}X^i$ otherwise}
\end{cases}}
&=\sum_{i\notin m\N} \frac{(-1)^{i+1}}{i}X^i=\ln(G).
\end{align*}
Taking the exponential, we obtain that
\[G=\sum_{n=0}^\infty Z(\mathfrak{S}_n,X)\Bigg|_{ X_i=\begin{cases}
\mbox{\scriptsize $0$  if $m\mid i$,}\\
\mbox{\scriptsize $(-1)^{i+1}X^i$ otherwise}.
\end{cases}}\]
Considering the coefficient of $X^n$ gives:
\[a_n=Z(\mathfrak{S}_n,X_1,\ldots,X_n)\Bigg|_{ X_i=\begin{cases}
\mbox{\scriptsize $0$  if $m\mid i$,}\\
\mbox{\scriptsize $(-1)^{i+1}X^i$ otherwise}.
\end{cases}}\]
Moreover:
\begin{align*}
G&=\exp\left(\sum_{i\notin m\N} \frac{(-1)^{i+1}}{i}X^i\right)\\
&=\exp\left(\sum_{i\geq 1}\frac{((-1)^{i+1}}{i}X^i\right)\exp\left(\sum_{m\mid i}\frac{(-1)^{i}}{i}X^i\right)\\
&=(1+X)\sum_{n\geq 0}Z(\mathfrak{S}_n,X_1,\ldots,X_n)\Bigg|_{ X_i=\begin{cases}
\mbox{\scriptsize $(-1)^{mi}$  if $m\mid i$,}\\
\mbox{\scriptsize $0$ otherwise}.
\end{cases}}
\end{align*}
Considering the coefficient of $X^n$, this gives the formula for $a_n$, with the observation that
\[Z(\mathfrak{S}_n,X_1,\ldots,X_n)\Bigg|_{ X_i=\begin{cases}
\mbox{\scriptsize $(-1)^{mi}$  if $m\mid i$,}\\
\mbox{\scriptsize $0$ otherwise}
\end{cases}}=0\mbox{ if }n\notin m\N.\]
 \end{proof}

\section{Acknowledgements}
Parts of this work first appeared in the MMath thesis of the first author, \cite{Dmmath}.  KY owes a particular dept to Spencer Bloch and Dirk Kreimer for some conversations during a 2014 visit which inspired this notion of generalized renormalization group equation.  KY is supported by an NSERC Discovery grant and the Canada Research Chairs program.
LF acknowledges support from the grant ANR-20-CE40-0007.

\bibliographystyle{amsplain}
\bibliography{biblio}

\end{document}